\def\cal{\mathcal}
\chardef\@x10\chardef\@xv60
\def\tcitime{
\def\@time{%
  \@minute\time\@hour\@minute\divide\@hour\@xv
  \ifnum\@hour<\@x 0\fi\the\@hour:%
  \multiply\@hour\@xv\advance\@minute-\@hour
  \ifnum\@minute<\@x 0\fi\the\@minute
  }}%
\def\QCTOpt[#1]#2{%
  \def\QCTOptB{#1}
  \def\QCTOptA{#2}
}
\def\QCTNOpt#1{%
  \def\QCTOptA{#1}
  \let\QCTOptB\empty
}
\def\Qct{%
  \@ifnextchar[{%
    \QCTOpt}{\QCTNOpt}
}
\def\QCBOpt[#1]#2{%
  \def\QCBOptB{#1}
  \def\QCBOptA{#2}
}
\def\QCBNOpt#1{%
  \def\QCBOptA{#1}
  \let\QCBOptB\empty
}
\def\Qcb{%
  \@ifnextchar[{%
    \QCBOpt}{\QCBNOpt}
}
\def\PrepCapArgs{%
  \ifx\QCBOptA\empty
    \ifx\QCTOptA\empty
      {}%
    \else
      \ifx\QCTOptB\empty
        {\QCTOptA}%
      \else
        [\QCTOptB]{\QCTOptA}%
      \fi
    \fi
  \else
    \ifx\QCBOptA\empty
      {}%
    \else
      \ifx\QCBOptB\empty
        {\QCBOptA}%
      \else
        [\QCBOptB]{\QCBOptA}%
      \fi
    \fi
  \fi
}
\def\GRAPHICSPS#1{%
 \ifcase\GRAPHICSTYPE
   \special{ps: #1}%
 \or
   \special{language "PS", include "#1"}%
 \fi
}%
\def\graffile#1#2#3#4{%
    \leavevmode
    \raise -#4 \BOXTHEFRAME{%
        \hbox to #2{\raise #3\hbox to #2{\null #1\hfil}}}%
}%
\def\draftbox#1#2#3#4{%
 \leavevmode\raise -#4 \hbox{%
  \frame{\rlap{\protect\tiny #1}\hbox to #2%
   {\vrule height#3 width\z@ depth\z@\hfil}%
  }%
 }%
}%
\newif\ifwasdraft
\def\GRAPHIC#1#2#3#4#5{%
 \ifnum\draft=\@ne\draftbox{#2}{#3}{#4}{#5}%
  \else\graffile{#1}{#3}{#4}{#5}%
  \fi
 }%
\def\addtoLaTeXparams#1{%
    \edef\LaTeXparams{\LaTeXparams #1}}%
\newif\ifBoxFrame \BoxFramefalse
\newif\ifOverFrame \OverFramefalse
\newif\ifUnderFrame \UnderFramefalse
\def\BOXTHEFRAME#1{%
   \hbox{%
      \ifBoxFrame
         \frame{#1}%
      \else
         {#1}%
      \fi
   }%
}
\def\doFRAMEparams#1{\BoxFramefalse\OverFramefalse\UnderFramefalse\readFRAMEparams#1\end}%
\def\readFRAMEparams#1{%
 \ifx#1\end%
  \let\next=\relax
  \else
  \ifx#1i\dispkind=\z@\fi
  \ifx#1d\dispkind=\@ne\fi
  \ifx#1f\dispkind=\tw@\fi
  \ifx#1t\addtoLaTeXparams{t}\fi
  \ifx#1b\addtoLaTeXparams{b}\fi
  \ifx#1p\addtoLaTeXparams{p}\fi
  \ifx#1h\addtoLaTeXparams{h}\fi
  \ifx#1X\BoxFrametrue\fi
  \ifx#1O\OverFrametrue\fi
  \ifx#1U\UnderFrametrue\fi
  \ifx#1w
    \ifnum\draft=1\wasdrafttrue\else\wasdraftfalse\fi
    \draft=\@ne
  \fi
  \let\next=\readFRAMEparams
  \fi
 \next
 }%
\def\IFRAME#1#2#3#4#5#6{%
      \bgroup
      \let\QCTOptA\empty
      \let\QCTOptB\empty
      \let\QCBOptA\empty
      \let\QCBOptB\empty
      #6%
      \parindent=0pt%
      \leftskip=0pt
      \rightskip=0pt
      \setbox0 = \hbox{\QCBOptA}%
      \@tempdima = #1\relax
      \ifOverFrame
          \typeout{This is not implemented yet}%
          \show\HELP
      \else
         \ifdim\wd0>\@tempdima
            \advance\@tempdima by \@tempdima
            \ifdim\wd0 >\@tempdima
               \textwidth=\@tempdima
               \setbox1 =\vbox{%
                  \noindent\hbox to \@tempdima{\hfill\GRAPHIC{#5}{#4}{#1}{#2}{#3}\hfill}\\%
                  \noindent\hbox to \@tempdima{\parbox[b]{\@tempdima}{\QCBOptA}}%
               }%
               \wd1=\@tempdima
            \else
               \textwidth=\wd0
               \setbox1 =\vbox{%
                 \noindent\hbox to \wd0{\hfill\GRAPHIC{#5}{#4}{#1}{#2}{#3}\hfill}\\%
                 \noindent\hbox{\QCBOptA}%
               }%
               \wd1=\wd0
            \fi
         \else
            \ifdim\wd0>0pt
              \hsize=\@tempdima
              \setbox1 =\vbox{%
                \unskip\GRAPHIC{#5}{#4}{#1}{#2}{0pt}%
                \break
                \unskip\hbox to \@tempdima{\hfill \QCBOptA\hfill}%
              }%
              \wd1=\@tempdima
           \else
              \hsize=\@tempdima
              \setbox1 =\vbox{%
                \unskip\GRAPHIC{#5}{#4}{#1}{#2}{0pt}%
              }%
              \wd1=\@tempdima
           \fi
         \fi
         \@tempdimb=\ht1
         \advance\@tempdimb by \dp1
         \advance\@tempdimb by -#2%
         \advance\@tempdimb by #3%
         \leavevmode
         \raise -\@tempdimb \hbox{\box1}%
      \fi
      \egroup%
}%
\def\DFRAME#1#2#3#4#5{%
 \begin{center}
     \let\QCTOptA\empty
     \let\QCTOptB\empty
     \let\QCBOptA\empty
     \let\QCBOptB\empty
     \ifOverFrame 
        #5\QCTOptA\par
     \fi
     \GRAPHIC{#4}{#3}{#1}{#2}{\z@}
     \ifUnderFrame 
        \nobreak\par #5\QCBOptA
     \fi
 \end{center}%
 }%
\def\FFRAME#1#2#3#4#5#6#7{%
 \begin{figure}[#1]%
  \let\QCTOptA\empty
  \let\QCTOptB\empty
  \let\QCBOptA\empty
  \let\QCBOptB\empty
  \ifOverFrame
    #4
    \ifx\QCTOptA\empty
    \else
      \ifx\QCTOptB\empty
        \caption{\QCTOptA}%
      \else
        \caption[\QCTOptB]{\QCTOptA}%
      \fi
    \fi
    \ifUnderFrame\else
      \label{#5}%
    \fi
  \else
    \UnderFrametrue%
  \fi
  \begin{center}\GRAPHIC{#7}{#6}{#2}{#3}{\z@}\end{center}%
  \ifUnderFrame
    #4
    \ifx\QCBOptA\empty
      \caption{}%
    \else
      \ifx\QCBOptB\empty
        \caption{\QCBOptA}%
      \else
        \caption[\QCBOptB]{\QCBOptA}%
      \fi
    \fi
    \label{#5}%
  \fi
  \end{figure}%
 }%
\def\makeactives{
  \catcode`\"=\active
  \catcode`\;=\active
  \catcode`\:=\active
  \catcode`\'=\active
  \catcode`\~=\active
}
   \gdef\activesoff{%
      \def"{\string"}
      \def;{\string;}
      \def:{\string:}
      \def'{\string'}
      \def~{\string~}
    }
\def\FRAME#1#2#3#4#5#6#7#8{%
 \bgroup
 \@ifundefined{bbl@deactivate}{}{\activesoff}
 \ifnum\draft=\@ne
   \wasdrafttrue
 \else
   \wasdraftfalse%
 \fi
 \def\LaTeXparams{}%
 \dispkind=\z@
 \def\LaTeXparams{}%
 \doFRAMEparams{#1}%
 \ifnum\dispkind=\z@\IFRAME{#2}{#3}{#4}{#7}{#8}{#5}\else
  \ifnum\dispkind=\@ne\DFRAME{#2}{#3}{#7}{#8}{#5}\else
   \ifnum\dispkind=\tw@
    \edef\@tempa{\noexpand\FFRAME{\LaTeXparams}}%
    \@tempa{#2}{#3}{#5}{#6}{#7}{#8}%
    \fi
   \fi
  \fi
  \ifwasdraft\draft=1\else\draft=0\fi{}%
  \egroup
 }%
\def\TEXUX#1{"texux"}
\long\def\QQQ#1#2{%
     \long\expandafter\def\csname#1\endcsname{#2}}%
\long\def\QQA#1#2{}%
\def\QTR#1#2{{\csname#1\endcsname #2}}
\def\EXPAND#1[#2]#3{}%
\def\NOEXPAND#1[#2]#3{}%
\def\LaTeXparent#1{}%
\def\ChildStyles#1{}%
\def\ChildDefaults#1{}%
\def\QTagDef#1#2#3{}%
\def\QQfnmark#1{\footnotemark}
\def\makeatletter\input gnuindex.sty\makeatother\makeindex{\makeatletter\input gnuindex.sty\makeatother\makeindex}%
\def\initial#1{\bigbreak{\raggedright\large\bf #1}\kern 2\p@\penalty3000}}%
 \def\abstract{%
  \if@twocolumn
   \section*{Abstract (Not appropriate in this style!)}%
   \else \small 
   \begin{center}{\bf Abstract\vspace{-.5em}\vspace{\z@}}\end{center}%
   \quotation 
   \fi
  }%
   \def\registered{\relax\ifmmode{}\r@gistered
                    \else$\m@th\r@gistered$\fi}%
 \def\r@gistered{^{\ooalign
  {\hfil\raise.07ex\hbox{$\scriptstyle\rm\text{R}$}\hfil\crcr
  \mathhexbox20D}}}}{}%
\newdimen\theight
\def\Column{%
 \vadjust{\setbox\z@=\hbox{\scriptsize\quad\quad tcol}%
  \theight=\ht\z@\advance\theight by \dp\z@\advance\theight by \lineskip
  \kern -\theight \vbox to \theight{%
   \rightline{\rlap{\box\z@}}%
   \vss
   }%
  }%
 }%
\def\qed{%
 \ifhmode\unskip\nobreak\fi\ifmmode\ifinner\else\hskip5\p@\fi\fi
 \hbox{\hskip5\p@\vrule width4\p@ height6\p@ depth1.5\p@\hskip\p@}%
 }%
\def\miss{\hbox{\vrule height2\p@ width 2\p@ depth\z@}}%
\def\tcol#1{{\baselineskip=6\p@ \vcenter{#1}} \Column}  %
\def\newfmtname{LaTeX2e}
\def\chkcompat{%
   \if@compatibility
   \else
     \usepackage{latexsym}
   \fi
}
  \DeclareOldFontCommand{\rm}{\normalfont\rmfamily}{\mathrm}
  \DeclareOldFontCommand{\sf}{\normalfont\sffamily}{\mathsf}
  \DeclareOldFontCommand{\tt}{\normalfont\ttfamily}{\mathtt}
  \DeclareOldFontCommand{\bf}{\normalfont\bfseries}{\mathbf}
  \DeclareOldFontCommand{\it}{\normalfont\itshape}{\mathit}
  \DeclareOldFontCommand{\sl}{\normalfont\slshape}{\@nomath\sl}
  \DeclareOldFontCommand{\sc}{\normalfont\scshape}{\@nomath\sc}
\def\alpha{{\Greekmath 010B}}%
\def\beta{{\Greekmath 010C}}%
\def\gamma{{\Greekmath 010D}}%
\def\delta{{\Greekmath 010E}}%
\def\epsilon{{\Greekmath 010F}}%
\def\zeta{{\Greekmath 0110}}%
\def\eta{{\Greekmath 0111}}%
\def\theta{{\Greekmath 0112}}%
\def\iota{{\Greekmath 0113}}%
\def\kappa{{\Greekmath 0114}}%
\def\lambda{{\Greekmath 0115}}%
\def\mu{{\Greekmath 0116}}%
\def\nu{{\Greekmath 0117}}%
\def\xi{{\Greekmath 0118}}%
\def\pi{{\Greekmath 0119}}%
\def\rho{{\Greekmath 011A}}%
\def\sigma{{\Greekmath 011B}}%
\def\tau{{\Greekmath 011C}}%
\def\upsilon{{\Greekmath 011D}}%
\def\phi{{\Greekmath 011E}}%
\def\chi{{\Greekmath 011F}}%
\def\psi{{\Greekmath 0120}}%
\def\omega{{\Greekmath 0121}}%
\def\varepsilon{{\Greekmath 0122}}%
\def\vartheta{{\Greekmath 0123}}%
\def\varpi{{\Greekmath 0124}}%
\def\varrho{{\Greekmath 0125}}%
\def\varsigma{{\Greekmath 0126}}%
\def\varphi{{\Greekmath 0127}}%
\def\nabla{{\Greekmath 0272}}
\def\FindBoldGroup{%
   {\setbox0=\hbox{$\mathbf{x\global\edef\theboldgroup{\the\mathgroup}}$}}%
}
\def\Greekmath#1#2#3#4{%
    \if@compatibility
        \ifnum\mathgroup=\symbold
           \mathchoice{\mbox{\boldmath$\displaystyle\mathchar"#1#2#3#4$}}%
                      {\mbox{\boldmath$\textstyle\mathchar"#1#2#3#4$}}%
                      {\mbox{\boldmath$\scriptstyle\mathchar"#1#2#3#4$}}%
                      {\mbox{\boldmath$\scriptscriptstyle\mathchar"#1#2#3#4$}}%
        \else
           \mathchar"#1#2#3#4%
        \fi 
    \else 
        \FindBoldGroup
        \ifnum\mathgroup=\theboldgroup 
           \mathchoice{\mbox{\boldmath$\displaystyle\mathchar"#1#2#3#4$}}%
                      {\mbox{\boldmath$\textstyle\mathchar"#1#2#3#4$}}%
                      {\mbox{\boldmath$\scriptstyle\mathchar"#1#2#3#4$}}%
                      {\mbox{\boldmath$\scriptscriptstyle\mathchar"#1#2#3#4$}}%
        \else
           \mathchar"#1#2#3#4%
        \fi     	    
	  \fi}
\newif\ifGreekBold  \GreekBoldfalse
\let\SAVEPBF=\pbf
\def\pbf{\GreekBoldtrue\SAVEPBF}%
  \newcounter{equationnumber}  
  \def\mathletters{%
     \addtocounter{equation}{1}
     \edef\@currentlabel{\theequation}%
     \setcounter{equationnumber}{\c@equation}
     \setcounter{equation}{0}%
     \edef\theequation{\@currentlabel\noexpand\alph{equation}}%
  }
    \def\BibTeX{{\rm B\kern-.05em{\sc i\kern-.025em b}\kern-.08em
                 T\kern-.1667em\lower.7ex\hbox{E}\kern-.125emX}}}{}%
\def\AmS{{\protect\usefont{OMS}{cmsy}{m}{n}%
                A\kern-.1667em\lower.5ex\hbox{M}\kern-.125emS}}}{}%
\let\DOTSI\relax
\def\RIfM@{\relax\ifmmode}%
\def\FN@{\futurelet\next}%
\def\iint{\DOTSI\intno@\tw@\FN@\ints@}%
\def\iiint{\DOTSI\intno@\thr@@\FN@\ints@}%
\def\iiiint{\DOTSI\intno@4 \FN@\ints@}%
\def\idotsint{\DOTSI\intno@\z@\FN@\ints@}%
\def\ints@{\findlimits@\ints@@}%
\newif\iflimtoken@
\newif\iflimits@
\def\findlimits@{\limtoken@true\ifx\next\limits\limits@true
 \else\ifx\next\nolimits\limits@false\else
 \limtoken@false\ifx\ilimits@\nolimits\limits@false\else
 \ifinner\limits@false\else\limits@true\fi\fi\fi\fi}%
\def\multint@{\int\ifnum\intno@=\z@\intdots@                          
 \else\intkern@\fi                                                    
 \ifnum\intno@>\tw@\int\intkern@\fi                                   
 \ifnum\intno@>\thr@@\int\intkern@\fi                                 
 \int}
\def\multintlimits@{\intop\ifnum\intno@=\z@\intdots@\else\intkern@\fi
 \ifnum\intno@>\tw@\intop\intkern@\fi
 \ifnum\intno@>\thr@@\intop\intkern@\fi\intop}%
\def\intic@{%
    \mathchoice{\hskip.5em}{\hskip.4em}{\hskip.4em}{\hskip.4em}}%
\def\negintic@{\mathchoice
 {\hskip-.5em}{\hskip-.4em}{\hskip-.4em}{\hskip-.4em}}%
\def\ints@@{\iflimtoken@                                              
 \def\ints@@@{\iflimits@\negintic@
   \mathop{\intic@\multintlimits@}\limits                             
  \else\multint@\nolimits\fi                                          
  \eat@}
 \else                                                                
 \def\ints@@@{\iflimits@\negintic@
  \mathop{\intic@\multintlimits@}\limits\else
  \multint@\nolimits\fi}\fi\ints@@@}%
\def\intkern@{\mathchoice{\!\!\!}{\!\!}{\!\!}{\!\!}}%
\def\plaincdots@{\mathinner{\cdotp\cdotp\cdotp}}%
\def\intdots@{\mathchoice{\plaincdots@}%
 {{\cdotp}\mkern1.5mu{\cdotp}\mkern1.5mu{\cdotp}}%
 {{\cdotp}\mkern1mu{\cdotp}\mkern1mu{\cdotp}}%
 {{\cdotp}\mkern1mu{\cdotp}\mkern1mu{\cdotp}}}%
\def\RIfM@{\relax\protect\ifmmode}
\def\text{\RIfM@\expandafter\text@\else\expandafter\mbox\fi}
\let\nfss@text\text
\def\text@#1{\mathchoice
   {\textdef@\displaystyle\f@size{#1}}%
   {\textdef@\textstyle\tf@size{\firstchoice@false #1}}%
   {\textdef@\textstyle\sf@size{\firstchoice@false #1}}%
   {\textdef@\textstyle \ssf@size{\firstchoice@false #1}}%
   \glb@settings}
\def\textdef@#1#2#3{\hbox{{%
                    \everymath{#1}%
                    \let\f@size#2\selectfont
                    #3}}}
\newif\iffirstchoice@
\def\Let@{\relax\iffalse{\fi\let\\=\cr\iffalse}\fi}%
\def\vspace@{\def\vspace##1{\crcr\noalign{\vskip##1\relax}}}%
\def\multilimits@{\bgroup\vspace@\Let@
 \baselineskip\fontdimen10 \scriptfont\tw@
 \advance\baselineskip\fontdimen12 \scriptfont\tw@
 \lineskip\thr@@\fontdimen8 \scriptfont\thr@@
 \lineskiplimit\lineskip
 \vbox\bgroup\ialign\bgroup\hfil$\m@th\scriptstyle{##}$\hfil\crcr}%
\def\Sb{_\multilimits@}%
\def\endSb{\crcr\egroup\egroup\egroup}%
\def\Sp{^\multilimits@}%
\newdimen\ex@
\def\rightarrowfill@#1{$#1\m@th\mathord-\mkern-6mu\cleaders
 \hbox{$#1\mkern-2mu\mathord-\mkern-2mu$}\hfill
 \mkern-6mu\mathord\rightarrow$}%
\def\leftarrowfill@#1{$#1\m@th\mathord\leftarrow\mkern-6mu\cleaders
 \hbox{$#1\mkern-2mu\mathord-\mkern-2mu$}\hfill\mkern-6mu\mathord-$}%
\def\leftrightarrowfill@#1{$#1\m@th\mathord\leftarrow
\mkern-6mu\cleaders
 \hbox{$#1\mkern-2mu\mathord-\mkern-2mu$}\hfill
 \mkern-6mu\mathord\rightarrow$}%
\def\overrightarrow{\mathpalette\overrightarrow@}%
\def\overrightarrow@#1#2{\vbox{\ialign{##\crcr\rightarrowfill@#1\crcr
 \noalign{\kern-\ex@\nointerlineskip}$\m@th\hfil#1#2\hfil$\crcr}}}%
\def\overleftarrow{\mathpalette\overleftarrow@}%
\def\overleftarrow@#1#2{\vbox{\ialign{##\crcr\leftarrowfill@#1\crcr
 \noalign{\kern-\ex@\nointerlineskip}$\m@th\hfil#1#2\hfil$\crcr}}}%
\def\overleftrightarrow{\mathpalette\overleftrightarrow@}%
\def\overleftrightarrow@#1#2{\vbox{\ialign{##\crcr
   \leftrightarrowfill@#1\crcr
 \noalign{\kern-\ex@\nointerlineskip}$\m@th\hfil#1#2\hfil$\crcr}}}%
\def\underrightarrow{\mathpalette\underrightarrow@}%
\def\underrightarrow@#1#2{\vtop{\ialign{##\crcr$\m@th\hfil#1#2\hfil
  $\crcr\noalign{\nointerlineskip}\rightarrowfill@#1\crcr}}}%
\def\underleftarrow{\mathpalette\underleftarrow@}%
\def\underleftarrow@#1#2{\vtop{\ialign{##\crcr$\m@th\hfil#1#2\hfil
  $\crcr\noalign{\nointerlineskip}\leftarrowfill@#1\crcr}}}%
\def\underleftrightarrow{\mathpalette\underleftrightarrow@}%
\def\underleftrightarrow@#1#2{\vtop{\ialign{##\crcr$\m@th
  \hfil#1#2\hfil$\crcr
 \noalign{\nointerlineskip}\leftrightarrowfill@#1\crcr}}}%
\def\qopnamewl@#1{\mathop{\operator@font#1}\nlimits@}
\let\nlimits@\displaylimits
\def\setboxz@h{\setbox\z@\hbox}
\def\varlim@#1#2{\mathop{\vtop{\ialign{##\crcr
 \hfil$#1\m@th\operator@font lim$\hfil\crcr
 \noalign{\nointerlineskip}#2#1\crcr
 \noalign{\nointerlineskip\kern-\ex@}\crcr}}}}
 \def\rightarrowfill@#1{\m@th\setboxz@h{$#1-$}\ht\z@\z@
  $#1\copy\z@\mkern-6mu\cleaders
  \hbox{$#1\mkern-2mu\box\z@\mkern-2mu$}\hfill
  \mkern-6mu\mathord\rightarrow$}
\def\leftarrowfill@#1{\m@th\setboxz@h{$#1-$}\ht\z@\z@
  $#1\mathord\leftarrow\mkern-6mu\cleaders
  \hbox{$#1\mkern-2mu\copy\z@\mkern-2mu$}\hfill
  \mkern-6mu\box\z@$}
\def\projlim{\qopnamewl@{proj\,lim}}
\def\injlim{\qopnamewl@{inj\,lim}}
\def\varinjlim{\mathpalette\varlim@\rightarrowfill@}
\def\varprojlim{\mathpalette\varlim@\leftarrowfill@}
\def\varliminf{\mathpalette\varliminf@{}}
\def\varliminf@#1{\mathop{\underline{\vrule\@depth.2\ex@\@width\z@
   \hbox{$#1\m@th\operator@font lim$}}}}
\def\varlimsup{\mathpalette\varlimsup@{}}
\def\varlimsup@#1{\mathop{\overline
  {\hbox{$#1\m@th\operator@font lim$}}}}
\def\binom#1#2{{#1 \choose #2}}%
\def\align{\@verbatim \frenchspacing\@vobeyspaces \@alignverbatim
You are using the "align" environment in a style in which it is not defined.}
\let\csname endalign*\endcsname =\endtrivlist
\def\alignat{\@verbatim \frenchspacing\@vobeyspaces \@alignatverbatim
You are using the "alignat" environment in a style in which it is not defined.}
\let\csname endalignat*\endcsname =\endtrivlist
\def\xalignat{\@verbatim \frenchspacing\@vobeyspaces \@xalignatverbatim
You are using the "xalignat" environment in a style in which it is not defined.}
\let\csname endxalignat*\endcsname =\endtrivlist
\def\gather{\@verbatim \frenchspacing\@vobeyspaces \@gatherverbatim
You are using the "gather" environment in a style in which it is not defined.}
\let\csname endgather*\endcsname =\endtrivlist
\def\multiline{\@verbatim \frenchspacing\@vobeyspaces \@multilineverbatim
You are using the "multiline" environment in a style in which it is not defined.}
\let\csname endmultiline*\endcsname =\endtrivlist
\def\arrax{\@verbatim \frenchspacing\@vobeyspaces \@arraxverbatim
You are using a type of "array" construct that is only allowed in AmS-LaTeX.}
\def\tabulax{\@verbatim \frenchspacing\@vobeyspaces \@tabulaxverbatim
You are using a type of "tabular" construct that is only allowed in AmS-LaTeX.}
\let\csname endarrax*\endcsname =\endtrivlist
\let\csname endtabulax*\endcsname =\endtrivlist
\def\@@eqncr{\let\@tempa\relax
    \ifcase\@eqcnt \def\@tempa{& & &}\or \def\@tempa{& &}%
      \else \def\@tempa{&}\fi
     \@tempa
     \if@eqnsw
        \iftag@
           \@taggnum
        \else
           \@eqnnum\stepcounter{equation}%
        \fi
     \fi
     \global\tag@false
     \global\@eqnswtrue
     \global\@eqcnt\z@\cr}
 \def\endequation{%
     \ifmmode\ifinner 
      \iftag@
        \addtocounter{equation}{-1} 
        $\hfil
           \displaywidth\linewidth\@taggnum\egroup \endtrivlist
        \global\tag@false
        \global\@ignoretrue   
      \else
        $\hfil
           \displaywidth\linewidth\@eqnnum\egroup \endtrivlist
        \global\tag@false
        \global\@ignoretrue 
      \fi
     \else   
      \iftag@
        \addtocounter{equation}{-1} 
        \eqno \hbox{\@taggnum}
        \global\tag@false%
        $$\global\@ignoretrue
      \else
        \eqno \hbox{\@eqnnum}
        $$\global\@ignoretrue
      \fi
     \fi\fi
 } 
 \newif\iftag@ \tag@false
 \def\tag{\@ifnextchar*{\@tagstar}{\@tag}}
 \def\@tag#1{%
     \global\tag@true
     \global\def\@taggnum{(#1)}}
 \def\@tagstar*#1{%
     \global\tag@true
     \global\def\@taggnum{#1}%
}
\newcommand{\EE}{{\mathbb E}}
\newcommand{\ZZ}{{\mathbb Z}}
\newcommand{\NN}{{\mathbb N}}
\newcommand{\PP}{{\mathbb P}}
\newcommand{\BB}{{\mathbb B}}
\newcommand{\RR}{{\mathbb R}}
\newcommand{\B}{{\cal B}}
\newcommand{\E}{{\cal E}}
\newcommand{\F}{{\cal F}}
\newcommand{\I}{{\cal A}}
\newcommand{\aI}{{\cal I}}
\newcommand{\wQ}{{\widetilde Q}}
\newcommand{\wP}{{\widetilde P}}
\newcommand{\R}{{\cal R}}
\newcommand{\T}{{\cal T}}
\newcommand{\X}{{\cal X}}
\newcommand{\1}{{\bf 1}}
\newcommand{\wL}{{\widetilde {\1}}}
\newcommand{\aP}{{\cal P}}
\newcommand{\bP}{{\mathbb S}}
\newcommand{\wI}{\widetilde{\I}}
\newcommand{\wII}{\widehat{I}}
\newcommand{\wf}{\widetilde{f}}
\newcommand{\wa}{\widetilde{a}}
\newcommand{\wb}{\widetilde{b}}
\newcommand{\wZ}{\widetilde{Z}}
\newcommand{\wpreceq}{\widetilde{\preceq}}
\newcommand{\wH}{\widetilde H}
\newcommand{\wah}{{\widetilde h}}
\newcommand{\lP}{\widehat {\cal P}}
\newcommand{\lS}{\widehat{\cal D}}
\begin{document}
\title[M\"{o}bius Duality]{On M\"{o}bius duality and Coarse-Graining}
\author{Thierry Huillet$^{1}$, Servet Mart\'inez$^{2}$}
\address{$^{1}$Laboratoire de Physique Th\'{e}orique et Mod\*{e}lisation \\
CNRS-UMR 8089 et Universit\'{e} de Cergy-Pontoise, 2 Avenue Adolphe Chauvin,
95302, Cergy-Pontoise, FRANCE\\
$^{2}$ Departamento de Ingenier\'{i}a Matem\*{a}tica \\
Centro Modelamiento Matem\'{a}tico\\
UMI 2807, UCHILE-CNRS \\
Casilla 170-3 Correo 3, Santiago, CHILE.\\
E-mail: Thierry.Huillet@u-cergy.fr and smartine@dim.uchile.cl}

\begin{abstract}
We study duality relations for zeta and M\"{o}bius matrices
and monotone conditions on the kernels. We focus on the cases
of family of sets and partitions.
The conditions for positivity of the dual kernels are stated
in terms of the positive M\"{o}bius cone of functions, which
is described in terms of Sylvester formulae.
We study duality under coarse-graining and show that 
an $h-$transform is needed to preserve stochasticity. 
We give conditions in order that zeta and M\"{o}bius 
matrices admit coarse-graining, and we prove 
they are satisfied for sets and partitions. 
This is a source of relevant examples in genetics on 
the haploid and multi-allelic Cannings models.
\end{abstract}

\maketitle

\textbf{Running title}: M\"{o}bius Duality.

\smallskip

\textbf{Keywords}: \textit{Duality, M\"{o}bius matrices, coarse-graining, 
partitions, Sylvester formula, coalescence.}

\smallskip

\textbf{MSC 2000 Mathematics Subject Classification}: $60$J$10$, $60$J$70$, 
$92$D$25$.

\section{Introduction}
\label{Sec0}

\medskip

We study zeta and M\"{o}bius duality for a finite partially 
ordered space $(\I,\preceq)$ with special
emphasis when this space is a family of sets or of partitions.
We will supply conditions in order that the dual of a
nonnegative kernel $P$ defines a nonnegative kernel $Q$, and  
study relations between these two kernels.

\medskip

Section \ref{SecZM} is devoted to introducing zeta and M\"{o}bius matrices,
as done in \cite{BG, GR}. 
We supply the product formula for the product order which serves to list 
several examples in a unified way.

\medskip

In Section \ref{Sec2} we study zeta and  M\"{o}bius duality relations.
The conditions for positivity preserving are put in terms of the
positive M\"{o}bius cone of functions, which is the class of  positive 
functions having positive image under the M\"{o}bius matrix
(they are called M\"{o}bius monotone in \cite{LS}).
A well known duality relation of this type is 
the Siegmund duality for a finite interval of integers endowed with
the usual order, see \cite{Sieg}. In the general case we can retrieve 
only few of the properties of the Siegmund duality (for its properties 
see \cite{AD, DF, HM}), some of them 
only require that duality preserves positivity, other require stronger 
conditions and we always put them in terms of the positive M\"{o}bius cone.

\medskip

In Section \ref{Sec3} we study  
Sylvester formulae for sets (the well-known inclusion-exclusion 
relations) and for partitions.
To the best of our knowledge, the Sylvester formulae to be found  
in Section \ref{SecSylv2} for partitions, are new. 
These formulae aim at describing the positive M\"{o}bius cone 
and so, in principle, they can give some insight into the problem of when 
duality preserves positivity. 

\medskip

A natural question encountered in the context of zeta and
M\"{o}bius duality is when a duality relation is preserved by 
coarse-graining, that is when we can state some type of duality for 
coarser observations of the processes. Thus, instead of a set
it can be observed the number of elements it contains,
and instead of a partition it can be only access to the size of its atoms.
Coarse-graining duality is studied in Section \ref{Sec4},
the main result being Theorem \ref{thm1} where it is proven that
when the coarse-graining is satisfied, it is required an $h-$transform
in the dual kernel in order that stochasticity is preserved. 
In this section we also show that the conditions for coarse-graining are 
fulfilled for zeta and  M\"{o}bius matrices on sets and partitions. 

\medskip

Finally Section \ref{Sec5} is devoted to some examples of these duality 
relations. In these examples we revisit the haploid Cannings model
and the multi-allelic model with constant population size 
(see \cite{Cn1, Cn2, MM1, MM2}). 
In \cite{MM1,MM2} an ancestor type process was associated to these
models, and their duality was stated.
We will give a set version of these models, showing
they are in duality via a transpose zeta matrix
and that coarse-graining duality modified by an $h-$transform appears in 
a natural way giving the hypergeometric matrix.

\medskip

We point out that many of the concepts we will introduce and even some
of the results we will obtain, are straightforwardly defined or satisfied
in a countable infinite setting. But we prefer to keep a finite
framework for clarity and to avoid technicalities that can hide the 
meaning and interest of our results.

\medskip

A previous study on zeta and M\"{o}bius duality is
found in \cite{LS}. One of its results is what we called
conditions $(i)$ in Propositions \ref{prop1} and \ref{prop2}
in Section \ref{Sec2}, we give them for completeness and because
they are straightforward to obtain. The main result in 
\cite{LS} is Theorem 2, ensuring that 
there exists a strong dual (see \cite{DF}) 
for a stochastic kernel $P$  
such that the ratio between the initial distribution and 
the stationary distribution is M\"{o}bius monotone but also 
(mainly) that time reversed process is  M\"{o}bius monotone.
This type of questions will not be in the focus of our work.

\subsection{Notation}
For a set $A$, $|A|$ denotes its cardinality.
By $I, \, \I$ we denote finite sets. 
We denote
$\bP(I)=\{J: J \subseteq I\}$ the class of subsets of $I$.

\medskip

By $N$, $T$ we mean positive integers.
We set $\aI_N=\{1,..,N\}$. For two integers
$s\le t$ we denote by $\aI_t^s=\{s,..,t\}$ the interval of integers.
In particular $\aI_N^0=\{0,1,..,N\}$. 

\medskip 

For a relation $\R$ defined on some set, we define $\1_\R$ the
function which assigns a $1$ when $\R$ is satisfied and 
$0$ otherwise. For a set $A$, $\1_A$ is its characteristic function,
it gives value $1$ for the elements belonging to $A$ and $0$ otherwise.
Also we denote by $\1$ the $1-$constant vector with the 
dimension of the space where it is defined.   

\medskip

The transpose of a matrix or a vector $H$ is denoted by $H'$.
The functions $g:\I\to \RR$ can be identified to a column vector in
$\RR^\I$, so $g'$ means the row vector. In particular the 
characteristic function $\1_A$ is a column vector and $\1'_A$ a row vector.

\section{Zeta and M\"{o}bius matrices}
\label{SecZM}
This section follows the ideas developed by Rota in \cite{GR}.
The examples we give are well-known and the product formula supplied 
in \cite{BG} allows to present them in a unified way.

\medskip

Let $\I$ be a finite set and $(\I, \preceq)$ be a 
partially ordered space.

\medskip

The zeta matrix $Z=(Z(a,b): a,b\in \I)$
is given by $Z(a,b)= \1_{a\preceq b}$.
It is nonsingular and its inverse $Z^{-1}=(Z^{-1}(a,b): a,b\in \I)$
is the M\"{o}bius matrix. In \cite{BG} it was shown that 
the M\"{o}bius matrix satisfies $Z^{-1}(a,b)= \mu(a,b) \1_{a\preceq b}$, 
where for $a\preceq b$:
\begin{equation}
\label{mu1}
\mu(a,b)=
\begin{cases}
&1 \hbox{ if } a=b\\
& -\sum_{c\in \I: a\preceq c \preceq b} \mu(a,c) \hbox{ if }
a\prec b\,.
\end{cases}
\end{equation}
Also see \cite{GR} Section $3$.
For completeness, let us check that this matrix is
the inverse of $Z$. We have
$$
\sum_{c\in \I} \1_{a\preceq c} \mu(a,c) \1_{c\preceq b}=
\sum_{c\in \I: a\preceq c\preceq b} \mu(a,c).
$$
If $a=b$ then $c=a=b$ is the unique $c$ in the sum
and the above expression is $1$. When
$a\neq b$, in order that there exists some $c$ in the sum we must have
$a\prec b$. In this case, by definition of $\mu$ we have
$$
\left(\sum_{c\in \I: a\preceq c\prec b} \mu(a,c)\right)+\mu(a,b)=0.
$$
so, the inverse of $Z$ satisfies 
$Z^{-1}(a,b)= \mu(a,b) \1_{a\preceq b}$.
The function $\mu(a,b)$, that only needs to be defined for 
$a\preceq b$, is called the M\"{o}bius function. 
Since $\mu(a,a)=1$, $\mu$ is completely described once 
$\mu(a,b)$ is identified for $a\prec b$.

\medskip

We will also consider the transpose zeta and M\"{o}bius matrices
$Z'=(Z'(a,b)= \1_{b\preceq a}: a,b\in \I)$ and 
${Z'}^{-1}=({Z'}^{-1}(a,b)=\mu(b,a) \1_{b\preceq a}: a,b\in \I)$.

\medskip

Two partially ordered spaces $(\I_1, \preceq_1)$ $(\I_2, \preceq_2)$
are isomorphic if there exists a bijection $\varphi:\I_1\to \I_2$
that verifies $a \preceq_1 b$ if and only if
$\varphi(a) \preceq_2 \varphi(b)$. If $\mu_1$ and $\mu_2$
are their respective M\"{o}bius functions, then
$\mu_1(a,b)=\mu_2(\varphi(a),\varphi(b))$.

\medskip

\subsection{Product formula}
Let us introduce the product formula, as given in Theorem $3$ in \cite{GR}. 
Let $(\I_1,\preceq_1)$ and $(\I_2,\preceq_2)$ be two partially ordered spaces
with M\"{o}bius functions $\mu_1$ and $\mu_2$
respectively. The product set $\I_1 \times \I_2$ is partially ordered
with the product order $\preceq_{1,2}$ given by:
$(a_1,a_2)\preceq_{1,2} (b_1,b_2)$ if 
$a_1\preceq_1 b_1$ and $a_2\preceq_2 b_2$. The
M\"{o}bius function for the product space 
$(\I_1 \times \I_2,\preceq_{1,2})$ results to be the 
product of the M\"{o}bius functions:
\begin{equation}
\label{mu2}
a_1\preceq_1 b_1,\, a_2\preceq_2 b_2 \, \Rightarrow \, 
\mu((a_1,a_2),(b_1,b_2))=\mu_1(a_1,b_1)\mu_2(a_2,b_2). 
\end{equation}
The above relations are summarized in,
\begin{eqnarray}
\nonumber
&{}&\1_{(a_1,a_2)\preceq_{1,2} (b_1,b_2)}=\1_{a_1\preceq_{1} b_1}
\1_{a_2\preceq_{2} b_2} \,;\\
\label{pro1}
&{}&\1_{(a_1,a_2)\preceq_{1,2} (b_1,b_2)}\mu((a_1,a_2),(b_1,b_2))=
\mu_1(a_1,b_1)) \1_{a_1\preceq_{1} b_1}\cdot \mu_2(a_2,b_2) \1_{a_2\preceq_{2} b_2}.
\end{eqnarray}
Let $Z_r$ be the zeta matrix associated to $(\I_r,\preceq_r)$ for $r=1,2$,
and  $Z_{1,2}$ be the zeta matrix associated to 
the product space $(\I_1 \times \I_2,\preceq_{1,2})$.
For $g_r:\I_r\to \RR$ for $r=1,2$ define 
$g_1\otimes g_2:\I_1 \times \I_2\to \RR$ by
$g_1\otimes g_2(a_1,a_2)=g_1(a_1)g_2(a_2)$. By using (\ref{pro1}) we get 
\begin{eqnarray}
\nonumber
&{}&
(Z_{1,2} \, g_1\otimes g_2) ((a_1,a_2))=(Z_1 g_1)(a_1) (Z_2 g_2)(a_2)
\,;\\
\label{pro2}
&{}& (Z_{1,2}^{-1}\, g_1\otimes g_2) ((a_1,a_2))=(Z^{-1}_1 g_1)(a_1) 
(Z^{-1}_2 g_2)(a_2).
\end{eqnarray}

\subsection{M\"{o}bius functions for sets}
\label{SecZMse}
The most trivial case is $|\I|=2$. Take $\I=\{0,1\}$ with the usual
order $\le$. In this case $\mu(0,1)=-1$.
Then, the M\"{o}bius function of the product space $\{0,1\}^I$ endowed 
with the product partial order $\le$ is
\begin{equation}
\label{prx1}
\mu((a_i: i\in I),(b_i: i\in I))=(-1)^{\sum_{i\in I} (a_i-b_i)}
\hbox{ when } (a_i: i\in I)\le (b_i: i\in I).
\end{equation}
Let $I$ be a finite set, the class of its subsets 
$\bP(I)=\{J: J \subseteq I\}$ is partially ordered by 
inclusion $\subseteq$. Since $(\bP(I),\subseteq)$ 
is isomorphic to the product space $\{0,1\}^{I}$ endowed with the product
partial order, the M\"{o}bius function for $(\bP(I), \subseteq)$ is
\begin{equation}
\label{prx2}
\forall J,K\in \bP(I), J\subseteq K: \quad \mu(J,K)=(-1)^{|K|-|J|}.
\end{equation}
Its zeta matrix $Z=(Z(J,K): J,K\in \bP(I))$
satisfies $Z(J,K)= \1_{J\subseteq K}$ and
the M\"{o}bius matrix $Z^{-1}$ is given by
$Z^{-1}(J,K)=(-1)^{|K|-|J|} \1_{J\subseteq K}$.
The transpose matrices $Z'$ and ${Z^{-1}}'$ satisfy
$Z'(J,K)= \1_{K\subseteq J}$ and 
${Z^{-1}}'(J,K)=(-1)^{|J|-|K|} \1_{K\subseteq J}$.

\medskip

Let $T\ge 1$ be a positive integer.
The study of  $(\bP(I),\subseteq)$ also encompasses the class
of product of sets $\bP(I)^T$ endowed with the product order. 
To describe it, denote the elements of $\bP(I)^T$ by
$$
{\vec J}=(J_t: t\in \aI_T) \hbox{ with } J_t\subseteq I \hbox{ for } t\in \aI_T.
$$
Let ${\vec J}$ and ${\vec K}$ be two elements of
$\bP(I)^T$. The product order is ${\vec J}\subseteq {\vec K}$
if $J_t\subseteq K_t$ for $t\in \aI_T$. The  M\"{o}bius function for the
product ordered space $(\bP(I)^T,\subseteq)$ is 
\begin{equation}
\label{mmu3}
\mu({\vec J},{\vec K})=(-1)^{\sum_{t\in \aI_T}(|K_t|-|J_t|)} \hbox{ when }
{\vec J}\subseteq {\vec K}.
\end{equation}

Now note that 
\begin{equation}
\label{prx3}
\bP(I)^T\to \bP(I\times \aI_T), \; (J_t: t\in \aI_T)\to  
\bigcup_{t \in T} J_t\times \{t\}\,, 
\end{equation}
is a bijection that satisfies
$({\vec J}\subseteq {\vec K})\, \Leftrightarrow \, 
(\bigcup_{t \in T} J_t\times \{t\}\subseteq \bigcup_{t \in T} 
K_t\times \{t\})$. 
Then, the above bijection is an isomorphism
between the partially ordered spaces 
$(\bP(I)^T,\subseteq)$ and $(\bP(I\times \aI_T),\subseteq)$.
Hence, every statement for the class of sets also holds for 
the class of product of sets (the isomorphism between both spaces 
is a natural consequence of the construction done between 
(\ref{prx1}) and (\ref{prx2})).

\subsection{M\"{o}bius functions for partitions}
\label{SecZMpa}
Let $I$ be a finite set and $\aP(I)$ be the set of partitions of $I$. Thus,
$\alpha\in \aP(I)$ if $\alpha=\{A_t: t=1,..,T(\alpha)\}$, where:
$$
\forall t\in \aI_{T(\alpha)}\;\, A_t\! \in \!\bP(I)\setminus \{\emptyset\}, \; 
\, t\neq t' \; A_t\cap A_{t'}\!=\!\emptyset \hbox{ (disjointedness)},\;
\bigcup_{t\in \aI_{T(\alpha)}}\!\!\! A_t\!=\!I \hbox{ (covering)}.
$$
The sets $A_t$ are called the atoms of the partition, and 
the number of atoms constituting the partition $\alpha$ is denoted by 
$[\alpha]=T(\alpha)$.
An atom of $\alpha$ is often denoted by $A$ and we write
$A\in \alpha$. Since the order of the atoms plays no role
we write $\alpha=\{A\in \alpha\}$

\medskip

A partition $\alpha$ can be defined as the set of equivalence classes of 
an equivalence relation ${\equiv}_\alpha$ defined by $i{\equiv}_\alpha j \, 
\Leftrightarrow \, \exists A\in \alpha$  such that $i,j\in A$. 
That is, two elements are in relation ${\equiv}_\alpha$ when they are 
in the same atom of the partition.

\medskip

The set of partitions $\aP(I)$ is partially ordered by the 
following order relation
$$
\alpha\preceq \beta \hbox{ if } \forall \, A\in \alpha \,
\exists \, B\in \beta \hbox{ such that } A\subseteq B. 
$$
When $\alpha\preceq \beta$ it is said that $\alpha$ is finer 
than $\beta$ or that $\beta$ is coarser than $\alpha$. 

\medskip

The zeta matrix $Z=(Z(\alpha,\beta): \alpha,\beta\in \aP(I))$ is
given by $Z(\alpha,\beta)=\1_{\alpha\preceq \beta}$
and the M\"{o}bius matrix by $Z^{-1}(\alpha,\beta)= 
\mu(\alpha,\beta) \1_{\alpha\preceq \beta}$.
The M\"{o}bius function $\mu(\alpha,\beta)$ is shown to
satisfy the relation 
$$
\mu(\alpha,\beta)=(-1)^{[\alpha]+[\beta]}\prod_{B\in \beta} (\ell_B^\alpha-1)!
\hbox{ for } \alpha\prec \beta\,,
$$
where $\ell_B^\alpha=|\{A\in \alpha: A\subseteq B\}|$
is the number of atoms of $\alpha$ contained in $B$, see \cite{C} p. 36.

\section{Zeta and M\"{o}bius Duality}
\label{Sec2}
We will study duality relations for zeta and M\"{o}bius matrices
and the conditions for positivity in terms of what we 
call M\"{o}bius positive cones. Here, $\I$ is the set of indexes 
and as assumed it is finite.

\subsection{Duality}
\label{SecDual}
Let $P=(P(a,b): a,b\in \I)$ be a positive matrix, that is each entry is
non-negative, and $H=(H(a,b): a,b\in \I)$ be a matrix. Then,
$Q=(Q(a,b): a,b\in \I)$ is said to be a $H-$dual of $P$ if it satisfies
\begin{equation}
\label{dual0}
H Q'=P H\,.
\end{equation}
We usually refer to $P$ and $Q$ as kernels, and $Q$ is said to be the
dual kernel.
Duality relation (\ref{dual0}) implies $H Q'^n=P^n H$ for all $n\ge 0$.
If $H$ is nonsingular the duality relation (\ref{dual0}) takes the form
\begin{equation}
\label{dual1}
Q'=H^{-1} P H\,.
\end{equation}
One is mostly interested in the case when $P$ is substochastic
(that is nonnegative and satisfying $P\1\le \1$) or
stochastic (nonnegative and $P\1=\1$) and one looks for conditions in order
that $Q$ is nonnegative and, when this is the case, one seeks to know
when $Q$ is substochastic or stochastic.

\medskip

Now, let $h:\I\to \RR_+$ be a non-vanishing function 
and $D_h$ be the diagonal matrix given by $D_h(a,a)=h(a)$ 
for $a\in \I$. Its inverse is $D_h^{-1}=D_{h^{-1}}$.

\begin{lemma}
\label{lem2}
Let $h:\I\to \RR_+$ be a non-vanishing function. We have:
\begin{equation}
\label{dual3}
H Q'=P H \, \Leftrightarrow \, H_h Q'_{h^{\!-\!1}\!,h}=P H_h \hbox{ with }
H_h:=H D_h^{-1}\hbox{ and } Q_{h^{\!-\!1}\!,h}:= D_h^{-1}Q D_h\,.
\end{equation}
Assume $h>0$. Then, $Q\ge 0$ implies $Q_{h^{\!-\!1}\!,h}\ge 0$ and
\begin{equation}
\label{dual4}
\left(Q_{h^{\!-\!1}\!,h}\1=\1 \Leftrightarrow Qh=h\right)
\hbox{ and }
\left(Q_{h^{\!-\!1}\!,h}\1\le \1 \Leftrightarrow Qh\le h\right).
\end{equation}
\end{lemma}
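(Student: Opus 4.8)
The plan is to reduce everything to the single observation that $D_h$ is diagonal, hence symmetric, so that conjugation by $D_h$ interacts cleanly with transposition. First I would record the identity $Q'_{h^{\!-\!1}\!,h}=(D_h^{-1}Q D_h)'=D_h\,Q'\,D_h^{-1}$, using $D_h'=D_h$ and $(D_h^{-1})'=D_h^{-1}$. Substituting $H_h=H D_h^{-1}$ then gives $H_h Q'_{h^{\!-\!1}\!,h}=H D_h^{-1} D_h Q' D_h^{-1}=(HQ')D_h^{-1}$, while $P H_h=(PH)D_h^{-1}$. Since $h$ is non-vanishing, $D_h^{-1}$ is invertible, so right multiplication by $D_h^{-1}$ is a bijection on the space of $\I\times\I$ matrices; hence $H_h Q'_{h^{\!-\!1}\!,h}=P H_h$ holds if and only if $HQ'=PH$, which is the equivalence (\ref{dual3}).

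For the second part, assume $h>0$. The entrywise formula $Q_{h^{\!-\!1}\!,h}(a,b)=h(a)^{-1}Q(a,b)h(b)$ shows at once that $Q\ge 0$ implies $Q_{h^{\!-\!1}\!,h}\ge 0$, each factor being nonnegative. For the two stochasticity statements I would compute $Q_{h^{\!-\!1}\!,h}\1=D_h^{-1}Q D_h\1=D_h^{-1}(Qh)$, using $D_h\1=h$. Therefore $Q_{h^{\!-\!1}\!,h}\1=\1$ is equivalent to $Qh=D_h\1=h$; and since $D_h^{-1}$ is diagonal with strictly positive diagonal entries, applying it preserves and reflects the coordinatewise inequality $\le$, so $Q_{h^{\!-\!1}\!,h}\1\le\1$ is equivalent to $Qh\le h$. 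This yields (\ref{dual4}).

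There is essentially no real obstacle: the lemma is a bookkeeping statement about conjugating a duality relation by a diagonal matrix, and the only points needing (minor) care are tracking on which side the diagonal factors land after transposition, and invoking $h$ non-vanishing (resp. $h>0$) precisely where invertibility (resp. positivity and monotonicity) of $D_h^{-1}$ is used. One could alternatively route the argument through the nonsingular case (\ref{dual1}), since when $H$ is invertible $Q'_{h^{\!-\!1}\!,h}=D_h Q' D_h^{-1}=D_h H^{-1}PH D_h^{-1}=H_h^{-1}P H_h$; but the direct verification above does not assume $H$ nonsingular and is therefore the one I would present.
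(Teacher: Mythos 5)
Your proposal is correct and follows essentially the same route as the paper, which simply declares the relations straightforward and sketches the computation $Q_{h^{\!-\!1}\!,h}\1=\1 \Leftrightarrow QD_h\1=D_h\1 \Leftrightarrow Qh=h$; you have merely written out the details (the transpose identity $Q'_{h^{\!-\!1}\!,h}=D_hQ'D_h^{-1}$, cancellation of the invertible factor $D_h^{-1}$, and positivity of the diagonal entries for the inequality version), all of which check out.
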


\begin{proof}
All relations are straightforward. For instance (\ref{dual4}) follows
from $Q_{h^{\!-\!1}\!,h}\1= \1$ if and only if $Q D_h\1= D_h\1$,  
which is $Q h=h$. A similar argument proves the second relation with 
$\le$.
\end{proof}

The matrix $Q_{h^{\!-\!1}\!,h}$ is called the $h-$transform of $Q$.
So, it is a $H_h-$dual of $P$. When $h>0$ and $Q\ge 0$, 
the matrix $Q_{h^{\!-\!1}\!,h}$ is stochastic
if and only if $h$ is a right eigenvector of $Q$ with eigenvalue $1$.

\medskip

If $P$ and $Q$ are substochastic matrices
then the duality has the following probabilistic
interpretation in terms of their associated Markov chains.
Let $X=(X_n: n\ge 0)$ and $Y=(Y_n: n\ge 0)$ be the associated Markov
chains and $\T^X$ and $\T^Y$ be their lifetimes. Let
$\PP^X_a$ and $\PP^Y_b$ be the laws of the chains starting from the states
$a$ and $b$ respectively, and $\EE^X_a$ and $\EE^X_b$ be their
associated mean expected values.
Let $\partial^X$ and $\partial^Y$ be the coffin states of $X$ and $Y$
respectively, then $X_n=\partial^X$ for $n\ge \T^X$ and
$Y_n=\partial^Y$ for $n\ge \T^Y$. We make the extension
$$
H(\partial^X,b)=0=H(a,\partial^Y)=
H(\partial^X,\partial^Y)\,.
$$
Then, the duality relation (\ref{dual0}) is equivalent to
$$
\forall a,\, b\in \I \;\, \forall n\ge 0:\quad
\EE^X_a(H(X_n,b)= \EE^Y_b(H(a,Y_n))\,.
$$

This notion of duality was introduced in \cite{Lig} in a very general framework
and developed in several works, see \cite{DF, HM, KL, MM1} 
and references therein.

\subsection{M\"{o}bius positive cones}
\label{SecCones}
We will study duality relations for zeta and M\"{o}bius matrices and
set conditions for positivity in terms of the following classes
of nonnegative functions
$$
\F_+(\I)=\{g\in \RR_+^{\I}: Z^{-1} g\ge 0\}
\, \hbox{ and } \,
\F'_+(\I)=\{g\in \RR_+^{\I}: {Z^{-1}}' g\ge 0\} \,,
$$
Note that both sets are convex cones, we call them positive M\"{o}bius cones
(of functions). We have
$$
\F_+(\I)=\{g\in \RR^{\I}: Z^{-1} g\ge 0\}
\, \hbox{ and } \,
\F'_+(\I)=\{g\in \RR^{\I}: {Z^{-1}}' g\ge 0\} \,.
$$
For showing the first expression we only have to prove that if
$Z^{-1} g\ge 0$ then $g\ge 0$. This follows straightforward
from the non-negativity of $Z$,
$$
\forall a\in \I:\;\;
g(a)=\sum_{b: b\succeq a} (Z^{-1} g)(b)\,.
$$
The second expression is shown similarly. 
In  \cite{LS} the functions in $\F_+(\I)$ and 
$\F'_+(\I)$ are called M\"{o}bius monotone
and the argument we just gave is the Proposition $2.1$ therein.
We also define
$$
\F(\I)=\F_+(\I)-\F_+(\I)=\{g_1-g_2: g_1,g_2\in \F_+(\I)\} \, 
\hbox{ and } \, \F'(\I)=\F'_+(\I) - \F'_+(\I).
$$

For every $a\in \I$ the function $\RR^{\I}\to \RR$, $g\to Z^{-1} g (a)$ 
is linear. Hence, a simple consequence of the additivity gives
\begin{eqnarray}
\label{in1}
&{}&\forall \, g_1, \, g_2\in \F_+(\I)\,, a\in \I \;\;\;
\Rightarrow \, Z^{-1} g_1(a)\le Z^{-1} (g_1+g_2)(a)\,;\\
\label{in2}
&{}&\forall \, g_1, \, g_2\in \F'_+(\I)\,, a\in \I \;\;\;
\Rightarrow \, {Z^{-1}}' g_1(a)\le {Z^{-1}}' (g_1+g_2)(a)\,.
\end{eqnarray}

\subsection{Duality with Zeta and M\"{o}bius matrices}
\label{SecZM}

We will give necessary and sufficient conditions 
in order that zeta and M\"{o}bius duality, 
as well as their transpose, preserve positivity (these conditions 
appear as $(i)$ in the propositions).
Also we give stronger sufficient conditions having
stronger implications on the monotonicity of kernels
(these conditions appear as $(ii)$ in the propositions).

\medskip

As said, zeta and M\"{o}bius duality were already studied in  \cite{LS} 
and in this reference conditions $(i)$ of Propositions 
\ref{prop1} and \ref{prop2} are also found. We supply them for completeness 
and since they are straightforward.  

\medskip

In the sequel, we will introduce a notation for the rows and 
columns of a matrix.
For $P=(P(a,b):a,b\in \I)$ we denote by $P(a,\bullet)$
its $a-$th row and by $P(\bullet,b)$ its $b-$th column, that is
$$
P(a,\bullet):\I\to \RR,\; c\to P(a,c) \hbox{ and }
P(\bullet,b):\I\to \RR, \; c\to P(c,b).
$$

\subsubsection{Duality with the zeta matrix}
Assume the kernel $Q$ is the $Z-$dual of the positive kernel
$P$, so $Q'=Z^{-1} P Z$ holds. Hence,
\begin{eqnarray}
\nonumber
Q(a,b)&=&\sum_{c\in \I}\sum_{d\in \I}
Z^{-1}(b,c)P(c,d) Z(d,a)=
\sum_{c: b\preceq c}\mu(b,c)\sum_{d: d\preceq a}P(c,d)\\
\label{es1}
&=&\sum_{c: b\preceq c}\mu(b,c)
\left(\sum_{d: d\preceq a}P(\bullet, d)\right)(c)=
Z^{-1}\left(\sum_{d: d\preceq a}P(\bullet, d)\right)(b).
\end{eqnarray}

\medskip

\begin{proposition}
\label{prop1}
Assume $P\ge 0$.
 $(i)$ We have
\begin{equation}
\label{equi3}
Q\ge 0 \, \Leftrightarrow \,  \forall a\in \I\,: \;\,
\sum\limits_{d: d\preceq a}P(\bullet,d)\in \F_+(\I).
\end{equation}
When this condition holds the following implication is satisfied,
\begin{equation}
\label{fac1}
\left(P(c,d)>0 \; \Rightarrow \; c\preceq d\right) \hbox{ implies }
\left(Q(c,d)>0 \; \Rightarrow \; d\preceq c\right).
\end{equation}

\noindent $(ii)$ Assume for all $d\in \I$ we have $P(\bullet,d)\in \F_+(\I)$.
Then $Q\ge 0$ and for all $b$ the function $Q(a,b)$ is increasing in $a$,
that is
\begin{equation}
\label{elincs3}
\forall b\in \I, \, a_1\preceq a_2 \; \Rightarrow \;
Q(a_1,b) \le Q(a_2,b)\,.
\end{equation}
\end{proposition}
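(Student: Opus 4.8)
The plan is to read everything off the identity \eqref{es1}, which already expresses $Q(a,b)$ as the value at $b$ of the M\"{o}bius matrix applied to the column sum $\sum_{d:\,d\preceq a}P(\bullet,d)$. Write $s_a:=\sum_{d:\,d\preceq a}P(\bullet,d)\in\RR^{\I}$, so that \eqref{es1} becomes $Q(a,b)=\bigl(Z^{-1}s_a\bigr)(b)$, and note that since $P\ge 0$ each $s_a$ is a nonnegative function. For part $(i)$, the equivalence \eqref{equi3} is then immediate: $Q(a,\bullet)=Z^{-1}s_a\ge 0$ for a fixed $a$ exactly when $s_a\in\F_+(\I)$ (here I use that $s_a\ge 0$ and the definition $\F_+(\I)=\{g\in\RR^{\I}_+: Z^{-1}g\ge 0\}$), and $Q\ge 0$ means this holds for all $a$.

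For the implication \eqref{fac1}, assume $P(c,d)>0\Rightarrow c\preceq d$ and suppose $Q(c,d)>0$. Using the first line of \eqref{es1}, $Q(c,d)=\sum_{e:\,d\preceq e}\mu(d,e)\sum_{f:\,f\preceq c}P(e,f)$; for a term to be nonzero we need some $e\succeq d$ and some $f\preceq c$ with $P(e,f)>0$, hence $e\preceq f$ by hypothesis, so $d\preceq e\preceq f\preceq c$, giving $d\preceq c$. (Strictly, one should note the sum is over such $e,f$ and that at least one summand must be nonzero for the total to be positive; the chain of inequalities applies to each such summand.) This disposes of $(i)$.

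For part $(ii)$, assume $P(\bullet,d)\in\F_+(\I)$ for every $d$. Then $s_a=\sum_{d:\,d\preceq a}P(\bullet,d)$ is a finite sum of elements of the convex cone $\F_+(\I)$, hence $s_a\in\F_+(\I)$, so by $(i)$ we get $Q\ge 0$. For monotonicity, fix $b$ and take $a_1\preceq a_2$. Then $\{d:\,d\preceq a_1\}\subseteq\{d:\,d\preceq a_2\}$, so $s_{a_2}=s_{a_1}+\sum_{d:\,d\preceq a_2,\,d\not\preceq a_1}P(\bullet,d)$, where the extra sum is again in $\F_+(\I)$. Applying the additivity inequality \eqref{in1} with $g_1=s_{a_1}$ and $g_2$ equal to that extra sum, evaluated at $b$, yields $Q(a_1,b)=Z^{-1}s_{a_1}(b)\le Z^{-1}s_{a_2}(b)=Q(a_2,b)$, which is \eqref{elincs3}.

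There is no real obstacle here; the only point requiring a moment's care is the cone/additivity bookkeeping in $(ii)$ — making sure the incremental column sum lands in $\F_+(\I)$ so that \eqref{in1} applies — and, in \eqref{fac1}, being careful that the positivity of a sum forces positivity of at least one summand before running the order-chain argument.
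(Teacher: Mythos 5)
Your proof is correct and follows essentially the same route as the paper: reading part $(i)$ directly off the identity \eqref{es1}, deducing \eqref{fac1} by tracing the order constraints on the indices of any nonzero summand, and obtaining part $(ii)$ from the cone property of $\F_+(\I)$ together with the decomposition $s_{a_2}=s_{a_1}+g$ and the additivity inequality \eqref{in1}. The care you take about a positive sum forcing a nonzero (not necessarily positive) summand is the right subtlety, since the $\mu(d,e)$ factors may be negative.
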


\begin{proof}
The equivalence (\ref{equi3}) is straightforward from equality (\ref{es1}).
To show relation (\ref{fac1}) we use the equality
$$
Q(a,b)=\sum_{(c,d): b\preceq c, \; d\preceq a}
\mu(b,c) P(c,d).
$$
Since we are assuming $P$ only charges couples $(c,d)$ such that
$c\preceq d$ then the previous sum is with respect to the
set $\{(c,d): b\preceq c; \; d\preceq a; \; c\preceq d\}$.
So, if this set is nonempty we necessarily have $b\preceq a$.

\medskip

\noindent $(ii)$ The first statement follows from $(i)$ and
the fact that $\F_+(\I)$ is a cone. For proving (\ref{elincs3}) we note that
$a_1\preceq a_2$ implies $\{d: d\preceq a_2\}\supseteq \{d: d\preceq a_1\}$.
Then,
$$
\sum_{d\preceq a_2} P(\bullet, d) =\sum_{d\preceq a_1} P(\bullet, d)+ g
\hbox{ with }
g=\sum_{d: d\preceq a_2, d\not\preceq a_1} P(\bullet, d).
$$ 
Then, from the hypothesis made in $(ii)$ we get $g\in \F_+(\I)$.
Hence, (\ref{in1}) and (\ref{es1}) give (\ref{elincs3}).
\end{proof}

\begin{remark}
\label{rem1}
Assume condition (\ref{equi3}) is satisfied
and that $(\I,\preceq)$ has a global maximum and a global minimum,
denoted respectively by $a_{max}$ and $a_{min}$.
Then, the hypothesis
$\left(P(c,d)>0 \; \Rightarrow \; c\preceq d\right)$
in (\ref{fac1}) assumes in particular
that $a_{max}$ is an absorbing point for $P$ because $P(a_{max},b)=0$
for all $b\neq a_{max}$. The property that it implies,
$\left(Q(c,d)>0 \; \Rightarrow \; d\preceq c \right)$, says in particular
that $a_{min}$ is an absorbing point for
$Q$ because $Q(a_{min}, d)=0$ for all $d\neq a_{min}$.
In the case 
$(\I,\preceq)=(\bP(I),\subseteq)$ we have $a_{max}=I$ and  $a_{min}=\emptyset$
and when $(\I,\preceq)=(\aP(I),\preceq)$ we have 
$a_{max}=\{I\}$ and  $a_{min}=\{\{i\}: i\in I\}$. 
\end{remark}

\begin{remark}
\label{rem2}
Under hypothesis $(ii)$, condition (\ref{elincs3}) implies
that if $Q$ is stochastic
then for comparable indexes the rows of $Q$ are equal.
\end{remark}

\subsubsection{Duality with the transpose zeta matrix}
Let the kernel $Q$ be the $Z'-$dual of the positive kernel
$P$, so $Q'={Z'}^{-1} P Z'$
is satisfied. Hence,
\begin{equation}
\label{es1x}
Q(a,b)=\sum_{c\in \I}\sum_{d\in \I}{Z}^{-1}(c,b) P(c,d) Z(a,d)
={Z^{-1}}'\left(\sum_{d: a\preceq d}P(\bullet, d)\right)(b).
\end{equation}

\medskip

\begin{proposition}
\label{prop2}
 $(i)$ We have
\begin{equation}
\label{equi4}
Q\ge 0 \, \Leftrightarrow \,  \forall a\in \I\,: \;\,
\sum\limits_{d: a\preceq d}P(\bullet,d)\in \F'_+(\I).
\end{equation}
When this condition holds the following implication is satisfied
\begin{equation}
\label{fac2}
\left(P(c,d)>0 \; \Rightarrow \; d\preceq c\right) \hbox{ implies }
\left(Q(c,d)>0 \; \Rightarrow \; c\preceq d\right).
\end{equation}

\noindent $(ii)$ Assume for all $d\in \I$ we have $P(\bullet,d)\in \F'_+(\I)$.
Then $Q\ge 0$ and for all $b$ the function $Q(a,b)$ is increasing in $a$,
that is
\begin{equation}
\label{dec4}
\forall b\in \I, \, a_2\preceq a_1 \; \Rightarrow \;
Q(a_1,b) \ge Q(a_2,b)\,.
\end{equation}
\end{proposition}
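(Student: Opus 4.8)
The whole proposition is the transpose, ``upset'' analogue of Proposition \ref{prop1}, and I would prove it in the same spirit, reading everything off the representation (\ref{es1x}). Writing $f_a=\sum_{d:\,a\preceq d}P(\bullet,d)$, formula (\ref{es1x}) says $Q(a,b)=({Z^{-1}}'f_a)(b)$, so both the sign of the whole row $Q(a,\bullet)$ and its dependence on $a$ are controlled by the cone $\F'_+(\I)$ and by the way the upset $\{d:\,a\preceq d\}$ varies with $a$. The only structural change from Proposition \ref{prop1} is that the downsets $\{d:\,d\preceq a\}$ used there are replaced by upsets, and $\F_+(\I),Z^{-1}$ by $\F'_+(\I),{Z^{-1}}'$.

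For part $(i)$ I would argue pointwise in $a$. Fixing $a$, the row $Q(a,\bullet)={Z^{-1}}'f_a$ is nonnegative if and only if $f_a\in\F'_+(\I)$, directly by the definition of $\F'_+(\I)$; letting $a$ range over $\I$ gives the equivalence (\ref{equi4}). For the support statement (\ref{fac2}) I would expand
\[
Q(a,b)=\sum_{(c,d):\,c\preceq b,\ a\preceq d}\mu(c,b)\,P(c,d),
\]
just as in the proof of (\ref{fac1}). Under the hypothesis that $P$ charges only couples with $d\preceq c$, every surviving term obeys $a\preceq d\preceq c\preceq b$, hence $a\preceq b$; so when $a\not\preceq b$ the index set is empty and $Q(a,b)=0$, which is precisely (\ref{fac2}).

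For part $(ii)$, nonnegativity of $Q$ is immediate from $(i)$: each $P(\bullet,d)\in\F'_+(\I)$ and $\F'_+(\I)$ is a cone, so $f_a\in\F'_+(\I)$ for every $a$ and (\ref{equi4}) applies. For the monotonicity (\ref{dec4}) I would, exactly as in Proposition \ref{prop1}$(ii)$, compare the two upset sums attached to $a_1$ and $a_2$. If $a_2\preceq a_1$, transitivity yields the inclusion $\{d:\,a_1\preceq d\}\subseteq\{d:\,a_2\preceq d\}$, so that
\[
f_{a_2}=f_{a_1}+g,\qquad g=\sum_{d:\,a_2\preceq d,\ a_1\not\preceq d}P(\bullet,d),
\]
with $g\in\F'_+(\I)$ by the cone hypothesis. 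Feeding $f_{a_1}$ and $g$ into (\ref{in2}) and reading the result through (\ref{es1x}) then gives the monotonicity (\ref{dec4}).

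The step I expect to be delicate is the orientation in part $(ii)$. Because the sum defining $Q(a,b)$ runs over the upset $\{d:\,a\preceq d\}$ rather than over a downset, the inclusion of index sets reverses relative to Proposition \ref{prop1}, and one has to be careful about which of $a_1,a_2$ carries the extra nonnegative increment $g$ in order to land (\ref{dec4}) with the orientation stated there rather than its opposite. Once this bookkeeping is fixed, the argument is pure additivity of $g\mapsto{Z^{-1}}'g$ together with the defining nonnegativity of $\F'_+(\I)$ recorded in (\ref{in2}), so no ideas beyond those already used for Proposition \ref{prop1} are needed.
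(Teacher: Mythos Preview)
Your approach is exactly the paper's: its entire proof reads ``It is entirely similar as the one of Proposition~\ref{prop1}.'' Your write-up of part $(i)$ and of the first half of $(ii)$ simply unpacks that sentence, and is correct.

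Your caution in the last paragraph is well placed, and in fact the orientation does \emph{not} come out as printed in (\ref{dec4}). Carry your own computation through: if $a_2\preceq a_1$ then $\{d:a_1\preceq d\}\subseteq\{d:a_2\preceq d\}$, so $f_{a_2}=f_{a_1}+g$ with $g\in\F'_+(\I)$, and (\ref{in2}) together with (\ref{es1x}) gives
\[
Q(a_1,b)=({Z^{-1}}'f_{a_1})(b)\ \le\ ({Z^{-1}}'f_{a_2})(b)=Q(a_2,b),
\]
which is the \emph{reverse} of (\ref{dec4}): the map $a\mapsto Q(a,b)$ is decreasing, not increasing. A two-point check confirms this (take $\I=\{0,1\}$ with $0\preceq 1$ and $P(\bullet,0)=P(\bullet,1)=(0,1)'\in\F'_+(\I)$; one finds $Q(0,1)=2>1=Q(1,1)$). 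So the inequality in (\ref{dec4}) as printed is a typo in the paper --- note the equation label itself is ``dec4'', and compare the parallel decreasing statement in Proposition~\ref{prop3}$(ii1)$. Your method is right; it proves the corrected inequality rather than the one printed, and no amount of ``bookkeeping'' will flip it back.
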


\begin{proof}
It is entirely similar as the one of Proposition \ref{prop1}.
\end{proof}

Similar notes as Remarks \ref{rem1} and \ref{rem2} can be 
made.

\medskip

The conditions in part $(i)$ of Propositions \ref{prop1} and
\ref{prop2} ensuring positivity of $Q$ are the same as the ones in \cite{LS}.

\subsubsection{Duality with the M\"{o}bius matrix }

Assume $Q$ is the $Z^{-1}-$dual of the positive kernel
$P$, so $Q'=Z PZ^{-1}$
is satisfied. This is
\begin{equation}
\label{s1}
Q(a,b)=\sum_{c\in \I} \, \sum_{d\in \I}Z(b,c)
P(c,d)Z^{-1}(d,a)
={Z^{-1}}'\left(\sum\limits_{c: b\preceq c}\!\!P(c,\bullet) \right)(a).
\end{equation}

\medskip

\begin{proposition}
\label{prop3}
Assume $P\ge 0$.

\noindent $(i)$ We have
\begin{equation}
\label{equi1}
Q\ge 0 \, \Leftrightarrow \,  \forall b\in \I\,: \;\,
\sum\limits_{c: b\preceq c}\!\!P(c,\bullet)\in \F'_+(\I).
\end{equation}
If this condition holds we have that (\ref{fac1}) is satisfied.

\medskip

\noindent $(ii)$ Assume for all $c\in \I$ we have $P(c,\bullet)\in \F'_+(\I)$. Then
$Q\ge 0$ and:

\noindent $(ii1)$ $Q(a,b)$ is decreasing in $b$, that is
\begin{equation}
\label{dec1}
\forall a\in \I, \, b_1\preceq b_2 \; \Rightarrow \;
Q(a,b_1) \ge Q(a,b_2)\,;
\end{equation}

\noindent $(ii2)$ If $P$ is stochastic and irreducible then its
invariant distribution $\rho$ satisfies $\rho\in \F'_+(\I)$;

\medskip

\noindent $(ii3)$ If $Q$ is stochastic and irreducible then its
invariant distribution ${\widehat \rho}$ is decreasing that is:
$b_1\preceq b_2 \; \Rightarrow \; {\widehat \rho}(b_1) \ge
{\widehat \rho}(b_2)$.

\end{proposition}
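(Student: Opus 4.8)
The plan is to reduce every assertion to the representation (\ref{s1}), $Q(a,b)={Z^{-1}}'\bigl(\sum_{c:\,b\preceq c}P(c,\bullet)\bigr)(a)$, together with the characterisation $\F'_+(\I)=\{g\in\RR^{\I}:\,{Z^{-1}}'g\ge 0\}$ recorded just before (\ref{in2}). For $(i)$, fix $b$ and put $g_b=\sum_{c:\,b\preceq c}P(c,\bullet)$; since $P\ge 0$ we have $g_b\ge 0$, so by (\ref{s1}) the column $Q(\bullet,b)$ is nonnegative if and only if ${Z^{-1}}'g_b\ge 0$, i.e. if and only if $g_b\in\F'_+(\I)$. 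Ranging over $b$ yields (\ref{equi1}). The implication "(\ref{fac1}) holds" is then obtained exactly as in Proposition \ref{prop1}: expand $Q(a,b)=\sum_{(c,d):\,b\preceq c,\ d\preceq a}\mu(d,a)P(c,d)$ from (\ref{s1}); if $P$ charges only pairs with $c\preceq d$, the surviving terms have $b\preceq c\preceq d\preceq a$, so $Q(a,b)\neq 0$ forces $b\preceq a$.

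For $(ii)$ the one remark that does real work is that the hypothesis "$P(c,\bullet)\in\F'_+(\I)$ for every $c$" is precisely "$PZ^{-1}\ge 0$": indeed $\bigl({Z^{-1}}'P(c,\bullet)\bigr)(a)=\sum_d P(c,d)Z^{-1}(d,a)=(PZ^{-1})(c,a)$. Given this and the fact that $\F'_+(\I)$ is a cone, each $g_b$ lies in $\F'_+(\I)$, so $Q\ge 0$ by $(i)$. For $(ii1)$: when $b_1\preceq b_2$ we have $\{c:\,b_2\preceq c\}\subseteq\{c:\,b_1\preceq c\}$, hence $g_{b_1}=g_{b_2}+g$ with $g=\sum_{c:\,b_1\preceq c,\ b_2\not\preceq c}P(c,\bullet)\in\F'_+(\I)$; combining (\ref{in2}) with (\ref{s1}) gives $Q(a,b_1)\ge Q(a,b_2)$, which is (\ref{dec1}).

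For $(ii2)$, write the invariant distribution as a row vector, $\rho'P=\rho'$. Multiplying on the right by $Z^{-1}$ gives $\rho'Z^{-1}=\rho'(PZ^{-1})$, and the right-hand side is the product of the nonnegative row vector $\rho'$ with the nonnegative matrix $PZ^{-1}$, hence is $\ge 0$ componentwise; therefore ${Z^{-1}}'\rho=(\rho'Z^{-1})'\ge 0$, and since $\rho\ge 0$ automatically this says $\rho\in\F'_+(\I)$. (Irreducibility is used only to give "the" invariant distribution a meaning.) For $(ii3)$, let $\widehat{\rho}$ be the invariant distribution of $Q$, so $\widehat{\rho}'Q=\widehat{\rho}'$ and $\widehat{\rho}\ge 0$; for $b_1\preceq b_2$, part $(ii1)$ gives $Q(a,b_1)\ge Q(a,b_2)$ for every $a$, whence $\widehat{\rho}(b_1)=\sum_a\widehat{\rho}(a)Q(a,b_1)\ge\sum_a\widehat{\rho}(a)Q(a,b_2)=\widehat{\rho}(b_2)$.

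No step presents a genuine obstacle; the argument is bookkeeping around (\ref{s1}) and the cone properties of $\F'_+(\I)$. The only point I would flag as needing to be written out carefully is the identity $\bigl({Z^{-1}}'P(c,\bullet)\bigr)(a)=(PZ^{-1})(c,a)$, since it converts the hypothesis of $(ii)$ into "$PZ^{-1}\ge 0$" and thereby makes $(ii2)$ immediate; once $(ii1)$ is in hand, $(ii3)$ is a one-line consequence of the nonnegativity of $\widehat{\rho}$.
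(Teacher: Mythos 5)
Your proof is correct and follows essentially the same route as the paper: part $(i)$ and $(ii1)$ are the analogues of the Proposition \ref{prop1} argument built on (\ref{s1}) and the cone property (\ref{in2}), and $(ii2)$--$(ii3)$ are the same computation the paper gives (your matrix identity $\rho'Z^{-1}=\rho'(PZ^{-1})\ge 0$ is just the transposed form of the paper's $\rho=\sum_a\rho(a)P(a,\bullet)$ plus the cone property of $\F'_+(\I)$). No gaps.
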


\begin{proof}
The proof of $(i)$, the first statement in $(ii)$ and $(ii1)$ 
are similar to the proof of Proposition \ref{prop1}.
 
\medskip

\noindent $(ii2)$ The invariant distribution $\rho=(\rho(a): a\in \I)$
satisfies $\rho'=\rho' P$, so in our notation
$\rho=\sum_{a\in \I} \rho(a) P(a,\bullet)$. From our hypothesis we have that
$P(a,\bullet)\in \F_+(\I)$ for all $a\in \I$; since 
$\F_+(\I)$ is a cone we get the result.

\medskip

$(ii3)$ Since ${\widehat \rho}=\sum_{a\in \I}{\widehat \rho}(a) 
Q(a,\bullet)$ the property is derived from
property $(ii1)$.
\end{proof}

A similar note as Remark \ref{rem1} can be made. Duality with the 
M\"{o}bius matrix is a special case of duality with 
non-positive matrices. For a study considering other 
non-positive duality matrices see \cite{SL}.

\subsubsection{Duality with the transpose M\"{o}bius matrix }

Assume $Q$ is ${Z^{-1}}'-$dual of the positive matrix
$P$, so $Q'=Z' P {Z^{-1}}'$
is satisfied. Then,
\begin{equation}
\label{s2}
Q(a,b)=\sum_{c\in \I} \sum_{d\in \I} Z(c,b)P(c,d)Z^{-1}(a,d)
{Z^{-1}}\left(\sum_{c: c\preceq b}\!\!P(c,\bullet)\right)(a).
\end{equation}

\medskip

\begin{proposition}
\label{prop4}
Assume $P\ge 0$.
 $(i)$ We have
\begin{equation}
\label{equi2}
Q\ge 0 \, \Leftrightarrow \,  \forall b\in \I\,: \;\,
\sum\limits_{c: c\preceq b}P(c,\bullet)\in \F_+(\I).
\end{equation}
When this condition holds, relation (\ref{fac2}) is satisfied.

\medskip

\noindent $(ii)$ Assume for all $c\in \I$ we have $P(c,\bullet)\in \F_+(\I)$.
Then $Q\ge 0$ and:

\medskip

\noindent $(ii1)$ $Q(a,b)$ is increasing in $b$, this is
\begin{equation}
\label{inc1x}
\forall a\in \I, \, b_1\preceq b_2 \; \Rightarrow \;
Q(a,b_1) \le Q(a,b_2)\,;
\end{equation}

\noindent $(ii2)$ If $P$ is stochastic and irreducible then its
invariant distribution $\rho$ satisfies $\rho\in \F_+(\I)$;

\medskip

\noindent $(ii3)$ If $Q$ is stochastic and irreducible then its
invariant distribution ${\widehat \rho}$ is increasing.

\end{proposition}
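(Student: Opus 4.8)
The plan is to follow the pattern of the proofs of Propositions \ref{prop1} and \ref{prop3}, exploiting the explicit column formula (\ref{s2}), which reads $Q(a,b)=Z^{-1}\left(\sum_{c:\,c\preceq b}P(c,\bullet)\right)(a)$. Thus the $b$-th column $Q(\bullet,b)$ is the image under $Z^{-1}$ of the nonnegative function $g_b:=\sum_{c:\,c\preceq b}P(c,\bullet)\in\RR_+^{\I}$ (nonnegative since $P\ge 0$).

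For part $(i)$, by the very definition of the positive M\"obius cone $\F_+(\I)$ together with the formula above, $Q(\bullet,b)\ge 0$ is equivalent to $g_b\in\F_+(\I)$, and $Q\ge 0$ is then obtained by quantifying over all $b\in\I$; this is (\ref{equi2}). For the implication toward (\ref{fac2}), I would write $Q(a,b)=\sum_{(c,d):\,c\preceq b,\ a\preceq d}\mu(a,d)\,P(c,d)$; under the hypothesis that $P$ only charges pairs with $d\preceq c$, the sum runs over $\{(c,d):\,a\preceq d\preceq c\preceq b\}$, so a nonzero value of $Q(a,b)$ forces $a\preceq b$, which is precisely (\ref{fac2}).

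For part $(ii)$: if $P(c,\bullet)\in\F_+(\I)$ for every $c\in\I$, then $g_b$ is a finite sum of elements of the convex cone $\F_+(\I)$, hence $g_b\in\F_+(\I)$, and $Q\ge 0$ follows from $(i)$. For $(ii1)$, if $b_1\preceq b_2$ then $\{c:\,c\preceq b_1\}\subseteq\{c:\,c\preceq b_2\}$, so $g_{b_2}=g_{b_1}+g$ with $g=\sum_{c:\,c\preceq b_2,\ c\not\preceq b_1}P(c,\bullet)\in\F_+(\I)$; the additivity inequality (\ref{in1}) applied at the index $a$ then yields $Q(a,b_1)=Z^{-1}g_{b_1}(a)\le Z^{-1}g_{b_2}(a)=Q(a,b_2)$. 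For $(ii2)$, the invariant law satisfies $\rho=\sum_{a\in\I}\rho(a)\,P(a,\bullet)$, a nonnegative combination of functions in $\F_+(\I)$, so $\rho\in\F_+(\I)$ because the cone is stable under such combinations. For $(ii3)$, one writes $\widehat\rho=\sum_{a\in\I}\widehat\rho(a)\,Q(a,\bullet)$ and invokes $(ii1)$: each $Q(a,\bullet)$ is increasing in its argument, hence so is the nonnegative combination $\widehat\rho$.

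I expect no genuine obstacle: all the substance is already contained in (\ref{s2}), in the characterization of $\F_+(\I)$ through $Z^{-1}$, and in the linearity/additivity recorded in (\ref{in1}), the rest being the cone property of $\F_+(\I)$. The only point demanding attention is bookkeeping — checking that for the \emph{transpose} M\"obius dual the relevant cone is $\F_+(\I)$ (rather than $\F'_+(\I)$) and that the monotonicity in $(ii1)$ is in the \emph{second} argument and of \emph{increasing} type, in contrast with Proposition \ref{prop3} where it is decreasing. Accordingly, the write-up can simply state that the proof is entirely analogous to those of Propositions \ref{prop1} and \ref{prop3}, making explicit only these direction changes.
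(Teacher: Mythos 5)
Your proposal is correct and follows exactly the route the paper intends: the paper's own proof of Proposition \ref{prop4} simply states that $(i)$, the positivity in $(ii)$ and $(ii1)$ are proved as in Proposition \ref{prop1}, and $(ii2)$, $(ii3)$ as in Proposition \ref{prop3}, which is precisely what you carry out via the column formula (\ref{s2}), the cone property of $\F_+(\I)$, and the additivity inequality (\ref{in1}). Your explicit bookkeeping of the support constraint $a\preceq d\preceq c\preceq b$ for (\ref{fac2}) and of the direction changes relative to Proposition \ref{prop3} is accurate.
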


\begin{proof}
The proof of $(i)$, the first statement in $(ii)$ and $(ii1)$
are similar to the proof of Proposition \ref{prop1}, and
the parts $(ii2)$ are $(ii3)$ are shown in a 
similar way as $(ii2)$ and $(ii3)$ in Proposition \ref{prop3}.
\end{proof}

A similar note as Remark \ref{rem1} can be made.

\section{M\"{o}bius positive cones and Sylvester formulae 
for sets and partitions}
\label{Sec3}

\subsection{Sylvester formulae}
\label{SecSylv}

As already fixed $I$ is a finite set.
Let $(\X,\B)$ be a measurable space and 
$(X_i: \,i\in I)\subseteq \B$ be a finite class of events. 
The $\sigma-$algebra $\sigma(X_i: \,i\in I)$ generated by
$(X_i: \,i\in I)$ in $\X$, is the class of finite
unions of the disjoint sets
\begin{equation}
\label{atom1}
\bigcap\limits_{i\in J}X_i \setminus \left(\bigcup\limits_{L: L\supseteq J, \, 
L\neq J} \; \bigcap\limits_{i\in L}X_i\right), \; J\subseteq I\,.
\end{equation}
When $J=\emptyset$ the above set is $\X\setminus \bigcup_{i\in I} X_k$
because $\bigcap\limits_{i\in \emptyset}X_i=\X$.

\medskip

Since all we shall do only depends on 
$\sigma(X_i: \,i\in I)$, in the sequel
we only consider the measurable space $(\X,\sigma(X_i: \,i\in I))$.
When we say $(\X,\sigma(X_i: \,i\in I))$ is
a measurable space we mean $(X_i: \,i\in I)$ is a
family of subsets of $\X$ and $\sigma(X_i: \,i\in I)$ is the $\sigma-$algebra
generated by them.

\subsubsection{Sylvester formula for sets and product of sets}
\label{SecSylv1}
Let $(\X,\sigma(X_i: \,i\in I))$ be a measurable space.
Let $\nu$ be a finite measure or finite signed measure on 
$(\X,\sigma(X_i: \,i\in I))$. Sylvester formula is
$\nu(\X\setminus \bigcap\limits_{i\in I} X_i)=
\sum_{L\subseteq I} (-1)^{|L|} \nu(\bigcap\limits_{i\in L}X_i)$.
Let $J\in \bP(I)$ be fixed and consider
$\X'=\bigcap\limits_{i\in J}X_i$ and $X'_i=X_i\bigcap\limits \X'$
for $i\in I\setminus J$. We have
$\X'\setminus X'_k=\bigcap\limits_{i\in J}X_i\setminus
\bigcap\limits_{i\in J\cup \{j\}} X_i$
for all $j\in I\setminus J$. Then,
$\bigcap\limits_{i\in I\setminus J} \X'\setminus X'_i=
\bigcap\limits_{i\in J}X_i \setminus \! (\bigcup\limits_{L: L\supseteq J, \, L\neq J}
\; \bigcap\limits_{i\in L}X_i)$, and Sylvester formula gives
\begin{equation}
\label{syl1}
\forall J\!\in \! \bP(I):\;
\nu\left(\bigcap\limits_{i\in J}X_i \setminus
\! \left(\bigcup_{L: L\supseteq J, \, L\neq J}
\; \bigcap\limits_{i\in L}X_i\right)\!\right)\!=\!
\! \sum_{L: L\supseteq J}\! (-1)^{|L|-|J|} \nu(\bigcap\limits_{i\in L}X_i).
\end{equation}
We will write the above formula in terms of the M\"{o}bius matrix for sets.

\begin{proposition}
\label{prop5}
The measurable spaces $(\X,\sigma(X_i: \,i\in I))$ and
$(\bP(I),\bP(\bP(I))$ are isomorphic by: 
\begin{equation}
\label{iso1}
\Psi:\sigma(X_i\!: \!i\!\in \!I)\to \bP(\bP(I)):\,\;
\forall\, J\!\in \! \bP(I),\,
\Psi\left(\bigcap\limits_{i\in J}X_i \setminus \! 
\left(\bigcup_{L: L\supseteq J, \, L\neq J}
\; \bigcap\limits_{i\in L}X_i\right)\!\right)=\{J\}.
\end{equation}
For the other elements of the algebras we impose that $\Psi$ preserves
disjoint unions; thus $\Psi$ is an isomorphism of algebras.

\medskip

For every finite (respectively signed) measure $\nu$ defined on
$(\X,\sigma(X_i: \,i\in I))$ the (respectively
signed) measure $\nu^*=\nu\circ \Psi^{-1}$ on $(\bP(I),\bP(\bP(I))$
is given by:
\begin{equation*}
\forall\, J\in \bP(I):\;\; \nu^*(\{J\})=
\nu\left(\bigcap\limits_{i\in J}X_i \setminus 
\left(\bigcup_{L: L\supseteq J, \, L\neq J}
\; \bigcap\limits_{i\in L}X_i\right)\right)\,.
\end{equation*}
Under the isomorphism (\ref{iso1}) we have
\begin{equation}
\label{re0a}
\Psi\left(\bigcap\limits_{i\in J}X_i\right)=\{K: K\!\supseteq \!J\}\,;
\end{equation}
\begin{equation}
\label{re0b}
\forall i\in I:\; \Psi(X_i)=\{J: i\!\in \!J\}\,;
\end{equation}
\begin{equation}
\label{re0c}
\nu^*(\{J\})=\!\sum_{L: L\supseteq J}\!\! (-1)^{|L|-|J|} \,
\nu(\bigcap\limits_{i\in L}X^*_i)=\!\sum_{L: L\supseteq J}\!\! 
(-1)^{|L|-|J|} \left(\sum_{K: K\supseteq L}\!\!\! \nu^*(\{K\})\!\right).
\end{equation}
\end{proposition}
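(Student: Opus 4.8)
The plan is to reduce the whole statement to one combinatorial fact about the atoms of $\sigma(X_i:i\in I)$. For $J\subseteq I$ write $A_J$ for the set displayed in (\ref{atom1}), with $A_\emptyset=\X\setminus\bigcup_{i\in I}X_i$ as noted there. First I would check that $\{A_J:J\subseteq I\}$ is a partition of $\X$: for $x\in\X$ put $K(x)=\{i\in I:x\in X_i\}$; then $x\in\bigcap_{i\in K(x)}X_i$ while $x\notin\bigcap_{i\in L}X_i$ whenever $L\supsetneq K(x)$, so $x\in A_{K(x)}$, and $K(x)$ is the only subset of $I$ with this property. Hence $\sigma(X_i:i\in I)$ is the finite Boolean algebra whose atoms are the nonempty $A_J$, and every member of it is a disjoint union of such atoms. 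Since $\bP(\bP(I))$ is the Boolean algebra with atoms $\{\{J\}:J\subseteq I\}$ and $\Psi$ sends $A_J\mapsto\{J\}$ and is required to preserve disjoint unions, $\Psi$ is a well defined isomorphism of algebras (in the non-generic situation where some $A_J$ is empty one gets the canonical quotient map instead; the statement refers to the generic case). Pushing a finite measure or finite signed measure $\nu$ forward along $\Psi$ preserves finite additivity, so $\nu^*=\nu\circ\Psi^{-1}$ is again such a measure, and $\nu^*(\{J\})=\nu(\Psi^{-1}(\{J\}))=\nu(A_J)$, which is exactly the displayed formula for $\nu^*$.

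The crux is the identity $\bigcap_{i\in J}X_i=\bigsqcup_{K:\,K\supseteq J}A_K$, valid for every $J\subseteq I$. I would obtain the inclusion $\supseteq$ from $A_K\subseteq\bigcap_{i\in K}X_i\subseteq\bigcap_{i\in J}X_i$ whenever $K\supseteq J$, and the inclusion $\subseteq$ from the remark that any $x\in\bigcap_{i\in J}X_i$ lies in the atom $A_{K(x)}$ with $J\subseteq K(x)$. Applying $\Psi$ to this disjoint decomposition immediately gives $\Psi(\bigcap_{i\in J}X_i)=\{K:K\supseteq J\}$, that is (\ref{re0a}); taking $J=\{i\}$ gives $\Psi(X_i)=\{J:i\in J\}$, that is (\ref{re0b}).

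It then remains to transport Sylvester's formula (\ref{syl1}) through $\Psi$ to get (\ref{re0c}). Since $\nu^*(\{J\})=\nu(A_J)$, formula (\ref{syl1}) is precisely the first equality. For the second, writing $X_i^*=\Psi(X_i)$ and using (\ref{re0b}) one has $\bigcap_{i\in L}X_i^*=\{K:K\supseteq L\}$, hence $\nu(\bigcap_{i\in L}X_i)=\nu^*(\{K:K\supseteq L\})=\sum_{K:\,K\supseteq L}\nu^*(\{K\})$ by finite additivity, and substituting yields the claimed form. Equivalently, the partition identity above already reads $\nu(\bigcap_{i\in L}X_i)=\sum_{K\supseteq L}\nu^*(\{K\})$, and the outer alternating sum over $L\supseteq J$ is exactly M\"{o}bius inversion on $(\bP(I),\supseteq)$, whose M\"{o}bius function is $(-1)^{|L|-|J|}$ by (\ref{prx2}); so the double sum collapses back to $\nu^*(\{J\})$. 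I do not expect a genuine obstacle: the only thing to keep straight is that we work with reverse inclusion throughout, i.e.\ with the transpose zeta and M\"{o}bius matrices of $(\bP(I),\subseteq)$, so all sums run over supersets. Essentially all of the content of the proposition sits in the atom-partition identity $\bigcap_{i\in J}X_i=\bigsqcup_{K\supseteq J}A_K$, and everything else is bookkeeping through the isomorphism.
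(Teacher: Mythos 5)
Your proof is correct and follows essentially the same route as the paper's: both rest on the decomposition $\bigcap_{i\in J}X_i=\bigcup_{K\supseteq J}\bigl(\bigcap_{i\in K}X_i\setminus\bigcup_{L\supsetneq K}\bigcap_{i\in L}X_i\bigr)$, from which (\ref{re0a}) and (\ref{re0b}) follow by applying $\Psi$, and (\ref{re0c}) follows from Sylvester's formula (\ref{syl1}). You supply details the paper leaves implicit (that the sets in (\ref{atom1}) partition $\X$ via $x\mapsto K(x)$, both inclusions of the key identity, and the caveat about empty atoms), but the underlying argument is the same.
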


\begin{proof}
Let $X^*_i=\Psi(X_i)$ be the image of $X_i$ under this isomorphism, so
$\bigcap\limits_{i\in J}X^*_i=\Psi\left(\bigcap\limits_{i\in J}X_i\right)$.
Since
$$
\bigcap\limits_{i\in J}X_i=\bigcup\limits_{K\supseteq J}\left(
\bigcap\limits_{i\in K}X_i \setminus \left(\bigcup_{L: L\supseteq K, \, L\neq K}
\; \bigcap\limits_{i\in L}X_i\right)\right).
$$
the isomorphism gives (\ref{re0a}).
Then
$$
X^*_i=\bigcup\limits_{J: i\in J} \, \bigcap\limits_{j\in J}X^*_j=
\bigcup\limits_{J: i\in J} \{K: K\!\supseteq \!J\}=\{J: i\!\in \!J\},
$$
so (\ref{re0b}) is shown. Then,
(\ref{re0c}) follows from Sylvester formula (\ref{syl1}).
\end{proof}

Note that (\ref{re0c}) is equivalent to
$\nu^*(\{J\})=({Z}^{-1}(Z\nu^*))(\{J\})$ when $\nu^*=(\nu^*(\{J\}: J\in \bP(I)$ 
is written as a column vector.
Hence, Sylvester formula (\ref{syl1}) is equivalent to the fact that 
the M\"{o}bius function for the class of subsets ordered by inclusion
is $(-1)^{|L|-|J|}$ for $J\subseteq L$.

\medskip

As noted, the isomorphism given in (\ref{prx3}) guarantees that
a similar Sylvester formula can be stated for product of sets. 
Let us give this formula explicitly.
Let $T\ge 1$ be a positive integer. The product space $\bP(I)^T$ 
was endowed with the product order also denoted by 
$\subseteq$, the elements of $\bP(I)^T$ are written
${\vec{J}}=(J_t:t\in \aI_T)$ and in general we use 
the notions supplied in Section \ref{SecZMse}.
Similarly to Proposition \ref{prop5} we have:

\begin{proposition}
\label{prop6}
The measurable spaces $(\X, \sigma(X_{i,t}: (i,t)\!\in \! I\times \aI_T))$ and
$(\bP(I)^T, \bP(\bP(I)^T))$ are isomorphic by
$\Psi:\sigma(X_{i,t}: i\in I, t\in \aI_T)\to \bP(\bP(I)^T)$, where
\begin{equation}
\label{iso1p}
\forall\, {\vec {J}}\!\in \! \bP(I)^T:\;
\Psi\left(\bigcap_{t\in \aI_T}\bigcap\limits_{i\in J_t}X_{i,t} \setminus \!
\left(\bigcup_{{\vec {L}}: {\vec {L}}\supseteq {\vec {J}}, \, {\vec {L}}\neq {\vec {J}}}
\; \bigcap_{t\in \aI_T}\bigcap\limits_{i\in L_t}X_{i,t}\right)\!\right)=\{{\vec {J}}\};
\end{equation}
and on the other elements of the algebras we impose $\Psi$ preserves 
the disjoint unions, so $\Psi$ is an isomorphism of algebras.

\medskip

For every finite (respectively signed) measure $\nu$ defined on
$(\X,\sigma(X_{i,t}: i\!\in \! I, t\!\in \! \aI_T))$ the (respectively
signed) measure $\nu^*=\nu\circ \Psi^{-1}$ on $(\bP(I)^T,\bP(\bP(I)^T))$ is
\begin{equation*}
\forall\, {\vec {J}}\in \bP(I)^T:\;\; \nu^*({\vec {J}})=
\nu\left(\bigcap_{t\in \aI_T}\bigcap\limits_{i\in J_t}X_{i,t} \setminus
\left(\bigcup_{{\vec {L}}: {\vec {L}}\supseteq {\vec {J}}, \, {\vec {L}}\neq {\vec {J}}}
\; \bigcap_{t\in \aI_T}\bigcap\limits_{i\in L_t}X_{i,t}\right)\!\right)\,.
\end{equation*}
Under the isomorphism (\ref{iso1p}) we have:
\begin{equation*}
\Psi\left(\bigcap\limits_{t\in  \aI_T}\bigcap\limits_{i\in J_t}X_{i,t}\right)=
\{{\vec {K}}: {\vec {K}}\!\supseteq \!{\vec {J}}\}\,;
\end{equation*}
\begin{equation*}
\forall (i,t)\in I\times \aI_T:\; \Psi(X_{i,t})=\{{\vec{J}}: i\!\in \!J_t\}\,;
\end{equation*}
\begin{equation}
\label{re0cp}
\nu^*(\{{\vec{J}}\})=\sum_{{\vec{L}}: {\vec{L}}\supseteq {\vec{J}}}
(-1)^{\sum_{t\in \aI_T}(|L_t|-|J_t|)} \left(\sum_{{\vec{K}}: {\vec{K}}
\supseteq {\vec{L}}}\nu^*(\{{\vec{K}}\})\right).
\end{equation}
\end{proposition}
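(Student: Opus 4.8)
The plan is to obtain this statement as a direct consequence of Proposition \ref{prop5}, applied to the finite index set $\tilde I := I \times \aI_T$, together with the order isomorphism between $\bP(I)^T$ and $\bP(I \times \aI_T)$ that was recorded just below (\ref{prx3}).

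First I would relabel the family $(X_{i,t} : (i,t) \in I \times \aI_T)$ as $(X_j : j \in \tilde I)$. Since $\tilde I$ is finite, Proposition \ref{prop5} yields an isomorphism of algebras $\tilde\Psi : \sigma(X_j : j \in \tilde I) \to \bP(\bP(\tilde I))$ sending the atom attached to $J \subseteq \tilde I$ to $\{J\}$, and satisfying the analogues of (\ref{re0a})--(\ref{re0c}) for this index set. Let $\Phi : \bP(\tilde I) \to \bP(I)^T$ be the inverse of the bijection (\ref{prx3}); as noted just after (\ref{prx3}) it is an order isomorphism for the product order. Extending $\Phi$ setwise to a bijection $\Phi_* : \bP(\bP(\tilde I)) \to \bP(\bP(I)^T)$ — which, being a bijection, automatically preserves disjoint unions — the composite $\Psi := \Phi_* \circ \tilde\Psi$ is the claimed isomorphism.

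Then I would translate the formulas through $\Phi$. If $\Phi(J) = {\vec J} = (J_t : t \in \aI_T)$, that is $J = \bigcup_{t \in \aI_T} J_t \times \{t\}$, then $\bigcap_{j \in J} X_j = \bigcap_{t \in \aI_T} \bigcap_{i \in J_t} X_{i,t}$, and the strict supersets of $J$ in $\bP(\tilde I)$ correspond bijectively to the strict supersets of ${\vec J}$ in $\bP(I)^T$; applying $\Phi_*$ to $\tilde\Psi$ therefore turns the atom formula into (\ref{iso1p}) and turns the analogues of (\ref{re0a}), (\ref{re0b}) into the two displayed identities of the proposition. The signed-measure statement is immediate from $\nu^* = \nu \circ \Psi^{-1}$. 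Finally, in (\ref{re0c}) the sign $(-1)^{|L| - |J|}$ becomes $(-1)^{\sum_{t \in \aI_T}(|L_t| - |J_t|)}$ because $|\bigcup_t L_t \times \{t\}| = \sum_t |L_t|$ — which is precisely the M\"{o}bius function (\ref{mmu3}) of $(\bP(I)^T, \subseteq)$ — giving (\ref{re0cp}).

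The proof is essentially bookkeeping, and I do not expect a real obstacle; the one point to state carefully is that the bijection (\ref{prx3}) intertwines all the set operations entering the construction — the intersections $\bigcap_{j \in J} X_j$ and the ``subtract the union of all strict supersets'' operation defining the atoms in (\ref{atom1}) — so that the atom indexed by ${\vec J}$ is sent to $\{{\vec J}\}$. This is automatic because $\Phi$ is an isomorphism of the underlying posets, but it should be spelled out. If a self-contained argument is preferred, one can instead rerun the proof of Proposition \ref{prop5} verbatim with $\tilde I$ in place of $I$ and then rewrite subsets of $\tilde I$ in the ${\vec J}$-coordinates, using only Sylvester's formula (\ref{syl1}); this is longer but avoids invoking the isomorphism.
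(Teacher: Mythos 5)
Your proposal is correct and is exactly the route the paper takes: the paper states Proposition \ref{prop6} with no separate proof, justifying it by the remark that the order isomorphism (\ref{prx3}) between $(\bP(I)^T,\subseteq)$ and $(\bP(I\times \aI_T),\subseteq)$ transports Proposition \ref{prop5} and the Sylvester formula to the product-of-sets setting. Your careful spelling out of the relabelling, the transport of atoms, and the conversion of the sign $(-1)^{|L|-|J|}$ into $(-1)^{\sum_t(|L_t|-|J_t|)}$ fills in precisely the bookkeeping the paper leaves implicit.
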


For any finite set $I$ the algebra $\sigma(X_i: i\in I)$ is
generated by the $2^{|I|}$ sets defined by (\ref{atom1}) 
(they could be less if some intersections are empty, but 
for this discussion assume this does not happen). Since the isomorphism 
of algebras must preserve the number of generating elements 
a Sylvester formula can be written with spaces having cardinality 
of the type $2^N$ (as $\bP(I)$) and this formula 
retrieves the M\"{o}bius matrix for sets (similarly for
product of sets). For partitions 
this way is useless because the cardinality of $\aP(N)$ does
not belong to the class of numbers $2^N$, except for some 
exceptional cases.

\subsubsection{Sylvester formula for partitions}
\label{SecSylv2}
We seek for a Sylvester formula for partitions that allows to 
retrieve the M\"{o}bius matrix for partitions (instead of for sets
as in the previous Section). 

\medskip

As noted, any measurable space $(\X,\sigma(X_i: \, i\in I))$ 
has $2^{|I|}$ generating elements defined by (\ref{atom1}). 
Then, no natural isomorphism of algebras 
can be established with a measurable space of the the type 
$(\aP(I'),\bP(\aP(I')))$ for some $I'$,
because the cardinality $|\aP(I')|$ is the Bell number $B_{|I'|}$ which 
in general is not of the type $2^N$.
So, we require to define an algebra by using other constructive mechanisms.
The basis for this construction is given by the following relation:
\begin{equation}
\label{atom2}
\hbox{For }  J\in \bP(I), \, \alpha\in \aP(I)\hbox{ we denote } J\vdash \alpha
\hbox{ if } \exists A\in \alpha \hbox{ such that } J\subseteq A\,.
\end{equation}

Let $(\X,\B(\X))$ be a measurable space 
and $(X_J: \,J\in \bP(I))$ be a family of sets  
indexed by $\bP(I)$. We define $\sigma^\aP(X_i: \,i\in \bP(I))$
as the $\sigma-$algebra of sets generated by the elements
\begin{equation}
\label{atom3}
\bigcap\limits_{J\vdash \alpha}X_J \setminus \!
\left(\bigcup_{\gamma: \gamma\succeq \alpha, \, \gamma\neq \alpha}
\; \bigcap\limits_{J\vdash \gamma}X_J\right),\;\, 
\alpha\!\in \! \aP(I).
\end{equation}
That is, the elements of $\sigma^\aP(X_i: i\in \bP(I))$ are 
all the finite unions of the sets defined in (\ref{atom3}).

\medskip

On the other hand note that every partition 
$\alpha\!\in \! \aP(I)$ satisfies
\begin{equation}
\label{atom4}
\{\alpha\}=\{\beta: \beta\succeq \alpha\}\setminus
\left(\bigcup_{\gamma: \gamma\succeq \alpha, \, \gamma\neq \alpha}
\{\beta: \beta\succeq \gamma\}\right).
\end{equation}

\begin{proposition}
\label{prop7}
The measurable spaces $(\X,\sigma^\aP(X_i: \,i\in \bP(I)))$ 
and $(\aP(I),\bP(\aP(I))$ are isomorphic by  
\begin{equation}
\label{iso1y}
\Psi:\sigma^\aP(X_i\!:\! i\!\in \!\bP(I))\to \bP(\aP(I))\!:\,
\forall \alpha\!\in \! \aP(I),\;
\Psi\!\left(\! \bigcap\limits_{J\vdash \alpha}X_J \setminus \!
\left(\bigcup_{\gamma: \gamma\succeq \alpha, \, \gamma\neq \alpha}
\; \bigcap\limits_{J\vdash \gamma}X_J\right)\!\right)\!=\!\{\alpha\},
\end{equation}
and we impose it preserves the
disjoint unions, so $\Psi$ is an isomorphism of algebras.
For every finite (respectively signed) measure $\nu$ defined on
$(\X,\sigma^\aP(X_i: \,i\in \bP(I)))$ the finite (respectively
signed) measure $\nu^*=\nu\circ \Psi^{-1}$ on $(\aP,\bP(\aP(I)))$
is given by:
\begin{equation}
\label{iso2y}
\forall\, \alpha\in \aP(I):\;\; \nu^*(\{\alpha\})=
\nu\left(\bigcap\limits_{J\vdash \alpha}X_J \setminus
\left(\bigcup_{\gamma: \gamma\succeq \alpha, \, \gamma\neq \alpha}
\; \bigcap\limits_{J\vdash \gamma}X_J\right)\right).
\end{equation}
Moreover, under the isomorphism (\ref{iso1y}) we have
\begin{equation}
\label{re0ya}
\Psi\left(\bigcap\limits_{J\vdash \alpha}X_J\right)=\{\beta: \beta\succeq \alpha\}\,;
\end{equation}
\begin{equation}
\label{re0yb}
\forall J\in \bP(I)\,:\; \Psi(X_J)=\{\alpha: J\vdash \alpha\}\,.
\end{equation}
\end{proposition}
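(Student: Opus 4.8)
The plan is to follow, step for step, the proof of Proposition \ref{prop5}, with the identity (\ref{atom4}) for partitions playing the role that the elementary decomposition of $\bigcap_{i\in J}X_i$ into atoms plays in the set case. The mathematical heart is one order-theoretic translation of the relation $\vdash$ of (\ref{atom2}): for $\alpha,\beta\in\aP(I)$,
\[
\beta\succeq\alpha \iff \{J\in\bP(I): J\vdash\alpha\}\subseteq\{J\in\bP(I): J\vdash\beta\}.
\]
The implication $\Rightarrow$ is clear ($J\subseteq A\subseteq B$ with $A\in\alpha$, $B\in\beta$), and $\Leftarrow$ follows by applying the inclusion to $J=A$ for each atom $A\in\alpha$, since $A\vdash\alpha$. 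Hence $\{\beta:\beta\succeq\alpha\}=\bigcap_{J\vdash\alpha}\{\beta: J\vdash\beta\}$, so in the ``canonical'' realization $\X=\aP(I)$ with $X_J=\{\beta\in\aP(I): J\vdash\beta\}$ the set (\ref{atom3}) attached to $\alpha$ is $\{\beta:\beta\succeq\alpha\}\setminus\bigcup_{\gamma\succ\alpha}\{\beta:\beta\succeq\gamma\}$, which equals $\{\alpha\}$ by (\ref{atom4}). In this model $\Psi$ is the identity and (\ref{re0ya}), (\ref{re0yb}) are just the two displays above; the content of the general statement is that an abstract $(\X,\sigma^\aP(X_J: J\in\bP(I)))$ behaves in the same way.

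So, writing $Y_\alpha$ for the generator (\ref{atom3}) indexed by $\alpha$, the step that needs care --- and which I expect to be the main obstacle --- is that $(Y_\alpha:\alpha\in\aP(I))$ is a genuine partition of $\X$ into $|\aP(I)|$ nonempty, pairwise distinct blocks, and that each $X_J$ lies in $\sigma^\aP$. The inclusions $Y_\alpha\subseteq\bigcap_{J\vdash\alpha}X_J$ and $\bigcup_\alpha Y_\alpha=\X$ are formal; the delicate point is excluding a point of $\X$ in $Y_\alpha\cap Y_{\alpha'}$ for two incomparable $\alpha,\alpha'$. Here the argument of Proposition \ref{prop5} does not transpose verbatim, because $\aP(I)$ is not a distributive lattice: a subset $J$ fitting inside an atom of $\alpha\vee\alpha'$ need not fit inside an atom of $\alpha$ or of $\alpha'$. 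Thus this disjointness must be read as a genericity hypothesis on the family $(X_J)$, the exact counterpart of the assumption in Proposition \ref{prop5} that the $2^{|I|}$ sets (\ref{atom1}) be nonempty, and it holds automatically in the canonical realization. Granting it, $\sigma^\aP$ consists precisely of the finite disjoint unions of the $Y_\alpha$; defining $\Psi(Y_\alpha)=\{\alpha\}$ and extending by additivity over disjoint unions produces a bijection between the atoms of $\sigma^\aP$ and the atoms $\{\alpha\}$ of $\bP(\aP(I))$, hence a Boolean-algebra isomorphism $\Psi:\sigma^\aP\to\bP(\aP(I))$ extending the prescription (\ref{iso1y}).

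The remaining assertions are then bookkeeping. First, $\nu^{*}=\nu\circ\Psi^{-1}$ gives $\nu^{*}(\{\alpha\})=\nu(\Psi^{-1}(\{\alpha\}))=\nu(Y_\alpha)$, which is the formula (\ref{iso2y}). Next, for (\ref{re0ya}) I would show $\bigcap_{J\vdash\alpha}X_J=\bigsqcup_{\beta\succeq\alpha}Y_\beta$: the inclusion $\supseteq$ uses $Y_\beta\subseteq\bigcap_{J\vdash\beta}X_J\subseteq\bigcap_{J\vdash\alpha}X_J$ by the monotonicity of $\beta\mapsto\{J: J\vdash\beta\}$ established in the first paragraph, and $\subseteq$ uses that a point of $\bigcap_{J\vdash\alpha}X_J$ lies in the block $Y_\beta$ for some $\beta$ necessarily satisfying $\beta\succeq\alpha$; applying $\Psi$ gives $\Psi(\bigcap_{J\vdash\alpha}X_J)=\{\beta:\beta\succeq\alpha\}$. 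Finally (\ref{re0yb}) follows the same way from $X_J=\bigsqcup_{\alpha: J\vdash\alpha}Y_\alpha$ (with $\supseteq$ again from $Y_\alpha\subseteq\bigcap_{J'\vdash\alpha}X_{J'}\subseteq X_J$ when $J\vdash\alpha$), so $\Psi(X_J)=\{\alpha: J\vdash\alpha\}$. Iterating (\ref{re0ya}) then reproduces, exactly as after Proposition \ref{prop5}, the Sylvester-type identity recovering the M\"obius matrix for $(\aP(I),\preceq)$ recorded in Section \ref{SecZMpa}.
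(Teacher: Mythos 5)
Your argument is correct and follows essentially the same route as the paper's own proof: both reduce (\ref{re0ya}) and (\ref{re0yb}) to the decompositions of $\bigcap_{J\vdash\alpha}X_J$ and of $X_J$ into disjoint unions of the generators (\ref{atom3}) and then apply $\Psi$, with the equivalence $\beta\succeq\alpha\Leftrightarrow\{J:J\vdash\alpha\}\subseteq\{J:J\vdash\beta\}$ doing the order-theoretic work. Your explicit observation that, unlike in Proposition \ref{prop5}, the pairwise disjointness of the generators and the identity $X_J=\bigcup_{\alpha:J\vdash\alpha}$(generator of $\alpha$) do \emph{not} follow automatically for an arbitrary family $(X_J:J\in\bP(I))$ (because of the failure of distributivity in the partition lattice) and must be read as a standing hypothesis on the family is a genuine refinement: the paper's proof silently assumes exactly these facts when it writes $\Psi\bigl(\bigcap_{J\vdash\alpha}X_J\bigr)=\{\beta:\beta\succeq\alpha\}$ and $X_J^*=\bigcup_{\alpha:J\vdash\alpha}\bigcap_{K\vdash\alpha}X_K^*$.
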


\begin{proof}
Let us prove (\ref{re0ya}). From the isomorphism (\ref{iso1y}) and by setting
$X^*_J=\Psi(X_J)$ we get 
$$
\bigcap\limits_{J\vdash \alpha}X^*_J=
\Psi\left(\bigcap\limits_{J\vdash \alpha}X_J\right)=\{\beta: \beta\succeq \alpha\}.
$$
Then,
$$
X^*_J=\bigcup\limits_{\alpha: J\vdash \alpha} \, 
\bigcap\limits_{K: K\vdash \alpha}X^*_K=
\bigcup\limits_{\alpha: J\vdash \alpha}\{\beta: \beta\succeq \alpha\}.
$$
Now use,
$$
\bigcap\limits_{J\vdash \alpha} \{\beta: \beta\succeq \alpha\}=
 \{\beta: \forall J,\, J\vdash \alpha\Rightarrow J\vdash \beta\}
$$
to get  (\ref{re0yb}). 
\end{proof}

Let us now give the Sylvester formula in this setting. We recall 
the M\"{o}bius function $\mu$ defined in (\ref{mu1}).

\begin{proposition}
\label{prop8}
Let $\nu$ be a finite measure or a finite signed measure on the measurable space
$(\X,\sigma^\aP(X_i: \,i\in \bP(I)))$. Then,
\begin{equation}
\label{sylvpa}
\nu\left(\bigcap\limits_{J\vdash \alpha}X_J \setminus
\left(\bigcup_{\beta: \beta\succeq \alpha, \, \beta\neq \alpha}
\; \bigcap\limits_{J\vdash \beta}X_J\right)\right)
=
\sum\limits_{\beta: \beta\succeq \alpha} \mu(\alpha,\beta)
\nu\left(\bigcap\limits_{J\vdash \beta}X_J\right).
\end{equation}
\end{proposition}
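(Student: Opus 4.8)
The plan is to transport everything through the isomorphism $\Psi$ of Proposition~\ref{prop7} and then invoke the single algebraic fact $Z^{-1}Z=\Id$ for the zeta matrix of $(\aP(I),\preceq)$. Write $\nu^*=\nu\circ\Psi^{-1}$ for the induced (signed) measure on $(\aP(I),\bP(\aP(I)))$, and regard $\nu^*=(\nu^*(\{\gamma\}):\gamma\in\aP(I))$ as a column vector indexed by partitions. First I would observe that, by the defining relation (\ref{iso2y}), the left-hand side of (\ref{sylvpa}) is exactly $\nu^*(\{\alpha\})$; so the whole proposition reduces to checking that the right-hand side also equals $\nu^*(\{\alpha\})$.

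Next I would rewrite each term on the right-hand side. By (\ref{re0ya}) the set $\bigcap_{J\vdash\beta}X_J$ is sent by $\Psi$ to $\{\gamma:\gamma\succeq\beta\}$, so using finite additivity of $\nu^*$ on the finite space $\aP(I)$,
$$
\nu\Bigl(\bigcap_{J\vdash\beta}X_J\Bigr)=\nu^*(\{\gamma:\gamma\succeq\beta\})=\sum_{\gamma:\gamma\succeq\beta}\nu^*(\{\gamma\})=(Z\nu^*)(\beta),
$$
where $Z=(Z(\alpha,\beta)=\1_{\alpha\preceq\beta})$ is the zeta matrix of $(\aP(I),\preceq)$ as in Section~\ref{SecZMpa}. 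Substituting this and recalling $Z^{-1}(\alpha,\beta)=\mu(\alpha,\beta)\1_{\alpha\preceq\beta}$, the right-hand side of (\ref{sylvpa}) becomes
$$
\sum_{\beta:\beta\succeq\alpha}\mu(\alpha,\beta)\,\nu\Bigl(\bigcap_{J\vdash\beta}X_J\Bigr)=\sum_{\beta\in\aP(I)}Z^{-1}(\alpha,\beta)\,(Z\nu^*)(\beta)=\bigl(Z^{-1}Z\,\nu^*\bigr)(\alpha)=\nu^*(\{\alpha\}),
$$
which coincides with the left-hand side, giving (\ref{sylvpa}).

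There is no genuinely hard step here; the only point requiring care is that the sets $\bigcap_{J\vdash\beta}X_J$ actually lie in $\sigma^\aP(X_i:i\in\bP(I))$ and that (\ref{re0ya}) applies to them. This is precisely what Proposition~\ref{prop7} supplies, and it rests in turn on the decomposition of $\bigcap_{J\vdash\alpha}X_J$ as a disjoint union of the generating sets (\ref{atom3}), which is valid because $J\vdash\alpha$ together with $\beta\succeq\alpha$ forces $J\vdash\beta$. Granting Proposition~\ref{prop7}, the argument above is exactly the identity $Z^{-1}Z=\Id$ read through the bijection $\sigma^\aP(X_i:i\in\bP(I))\leftrightarrow\bP(\aP(I))$; this also makes transparent that (\ref{sylvpa}) is the partition-analogue of the remark after Proposition~\ref{prop5} that Sylvester's formula for sets is equivalent to the explicit form of the M\"{o}bius function on $(\bP(I),\subseteq)$.
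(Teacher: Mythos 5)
Your proof is correct and follows essentially the same route as the paper: both push the statement through the isomorphism $\Psi$ of Proposition \ref{prop7}, identify the left-hand side with $\nu^*(\{\alpha\})$ and the terms $\nu\bigl(\bigcap_{J\vdash\beta}X_J\bigr)$ with $(Z\nu^*)(\beta)$, and conclude from the matrix identity $Z^{-1}Z=\hbox{Id}$ on $(\aP(I),\preceq)$. Your write-up is merely a bit more explicit about invoking (\ref{re0ya}) and finite additivity; there is no substantive difference.
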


\begin{proof}
Let $\nu^*=\nu\circ \Psi^{-1}$ be given by (\ref{iso2y}). 
Since the finite measure or signed measure spaces  
$(\X,\sigma^\aP(X_i: \,i\in \bP(I)),\nu)$ and $(\aP,\bP(\aP(I)),\nu^*)$
are isomorphic we get that (\ref{sylvpa}) is equivalent to
\begin{equation*}
\nu^*\left(\bigcap\limits_{J\vdash \alpha}X^*_J \setminus
\left(\bigcup_{\beta: \beta\succeq \alpha, \, \beta\neq \alpha}
\; \bigcap\limits_{J\vdash \beta}X^*_J\right)\right)
=
\sum\limits_{\beta: \beta\succeq \alpha} \mu(\alpha,\beta)
\nu^* \left(\bigcap\limits_{J\vdash \beta}X_J\right).
\end{equation*}
So, it is equivalent to
\begin{equation}
\label{sylvpaII}
\nu^*(\{\alpha\})=
\sum\limits_{\beta: \beta\succeq \alpha} \mu(\alpha,\beta)
\left(\sum\limits_{\gamma: \gamma\succeq \beta}\nu^*(\{\gamma\})\right),
\end{equation}
which is exactly $\nu^*(\{\alpha\})=({Z}^{-1}(Z\nu^*))(\{\alpha\})$ when 
$\nu^*=(\nu^*(\{\alpha\}: \alpha\in {\bP(I)})$ is written as a column vector. 
Hence, the result is shown.
\end{proof}

\subsection{M\"{o}bius positive cones for sets and partitions}
\label{SecConessp}

Below we describe the {M\"{o}bius positive cones 
$\F_+(\I)$, $\F'_+(\I)$, $\F(\I)=\F_+(\I)-\F_+(\I)$
and $\F'(\I)=\F'_+(\I)-\F'_+(\I)$ by using Sylvester formulae 
for the class of subsets and the set of partitions.

\subsubsection{M\"{o}bius positive cones for sets}
\label{SecConesse}
\begin{proposition}
\label{prop9}
\noindent We have that $g\in \F_+(\bP(I))$ (respectively 
$g\in \F(\bP(I))$) if and only if there exists
a finite measure (respectively a finite signed measure) $\nu^g$ defined
on the measurable space $(\X,\sigma(X_i: i\in I))$ that satisfies
\begin{equation}
\label{l0}
\forall\;  J\in \bP(I): \quad g(J)=\nu^g(\bigcap\limits_{i\in J} X_i).
\end{equation}
In this case,
\begin{equation}
\label{de1}
{Z^{-1}} g(J)=
\nu^g\left(\bigcap\limits_{i\in J} X_i\setminus
\left(\bigcup_{L: L\supseteq J, \, L\neq J} \;\,
\bigcap\limits_{i\in L}X_i \right)\right).
\end{equation}
Note that $g(\emptyset)=\nu^g(\X)$ and $Z^{-1} g(\emptyset)=
\nu^g(\X\setminus \bigcup_{i\in I} X_i)$ because
$\bigcap\limits_{i\in \emptyset}X_i=\X$.

\medskip

Moreover, if $g\in \F_+(\bP(I))$ (respectively  $g\in \F(\bP(I))$) the 
finite (respectively signed) measure $\nu^{g*}=\nu^g\circ \Psi^{-1}$
defined on $(\bP(I), \bP(\bP(I)))$ satisfies
\begin{equation*}
\forall J\in \bP(I)\,:\quad \nu^{g*}(\{J\})=Z^{-1} g (J) \, \hbox{ and }
\, g(J)=\sum_{K: K\supseteq J} \nu^{g*}(\{K\}).
\end{equation*}
The function $g\to \nu^{g*}$ 
defined from $\F(\bP(I))$ into the space of finite signed measures on 
$(\bP(I), \bP(\bP(I)))$, 
is linear and sends $\F_+(\bP(I))$ into the space of finite measures 
on $(\bP(I), \bP(\bP(I)))$.
\end{proposition}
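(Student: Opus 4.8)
The plan is to transport the statement to the finite space $\bP(I)$ through the algebra isomorphism $\Psi$ of Proposition~\ref{prop5}, where the assertion reduces to M\"obius inversion, i.e.\ to the Sylvester formula (\ref{re0c}). Throughout, a function on $\bP(I)$ and a (signed) measure on $(\bP(I),\bP(\bP(I)))$ are the same datum, namely the assignment of masses to the singletons $\{J\}$.

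For the ``if'' implication I would begin with a finite (resp.\ signed) measure $\nu^g$ on $(\X,\sigma(X_i:i\in I))$ satisfying $g(J)=\nu^g(\bigcap_{i\in J}X_i)$ for all $J\in\bP(I)$. Decomposing $\bigcap_{i\in J}X_i$ as the disjoint union of the generating sets (\ref{atom1}) over all $K\supseteq J$ and applying $\nu^g$ yields $g(J)=\sum_{K\supseteq J}m(K)$, where $m(K)$ is the $\nu^g$-mass of the $K$-th such set; in matrix notation $g=Zm$, hence $Z^{-1}g=m$, which is exactly (\ref{de1}) (and, under $\Psi$, the identity $\nu^{g*}(\{J\})=Z^{-1}g(J)$). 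When $\nu^g\ge0$ we get $m\ge0$, so $Z^{-1}g\ge0$ and $g\in\F_+(\bP(I))$. For a signed $\nu^g$ I would pass to the Jordan decomposition $\nu^g=\nu^g_+-\nu^g_-$; each part lands in $\F_+(\bP(I))$ by the previous case, so $g\in\F(\bP(I))$.

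For the ``only if'' implication I would argue in reverse: given $g$ with $Z^{-1}g\ge0$, set $m:=Z^{-1}g$ and define a finite measure $\nu^{g*}$ on $(\bP(I),\bP(\bP(I)))$ by $\nu^{g*}(\{J\}):=m(J)$ (on a finite set any nonnegative assignment to the singletons is a measure), then pull it back along $\Psi$, $\nu^g:=\nu^{g*}\circ\Psi$; a concrete model is the canonical one $\X=\bP(I)$, $X_i=\{K:i\in K\}$ from Proposition~\ref{prop5}, whose generating sets (\ref{atom1}) are precisely the singletons. Using (\ref{re0a}) one checks $\nu^g(\bigcap_{i\in J}X_i)=\nu^{g*}(\{K:K\supseteq J\})=\sum_{K\supseteq J}m(K)=(Zm)(J)=g(J)$, which is (\ref{l0}), and (\ref{iso1}) gives (\ref{de1}). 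The case $g\in\F(\bP(I))$ follows by writing $g=g_1-g_2$ with $g_i\in\F_+(\bP(I))$ and taking $\nu^g=\nu^{g_1}-\nu^{g_2}$, or equivalently by repeating the singleton construction with the signed function $m=Z^{-1}g$.

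The three closing assertions are then read off: $\nu^{g*}(\{J\})=m(J)=Z^{-1}g(J)$ and $g=Zm$ give $g(J)=\sum_{K\supseteq J}\nu^{g*}(\{K\})$; linearity of $g\mapsto\nu^{g*}$ is linearity of $g\mapsto Z^{-1}g$; and $\F_+(\bP(I))$ is mapped into finite measures because $Z^{-1}$ preserves positivity on it. I do not foresee a genuine difficulty here: the only point to state with care is that Proposition~\ref{prop5} yields an honest bijection between functions on $\bP(I)$ and (signed) measures on $\sigma(X_i:i\in I)$, after which everything is bookkeeping around the Sylvester formula (\ref{re0c}).
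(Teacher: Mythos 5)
Your proposal is correct and follows essentially the same route as the paper: both directions rest on the isomorphism $\Psi$ of Proposition~\ref{prop5} and on M\"obius inversion (the Sylvester formula (\ref{syl1})/(\ref{re0c})), with the converse built by assigning the nonnegative weights $Z^{-1}g(J)$ to singletons and pulling back. The only (harmless) differences are presentational: you rederive (\ref{de1}) from the disjoint decomposition of $\bigcap_{i\in J}X_i$ into the generating sets (\ref{atom1}) rather than quoting (\ref{syl1}) directly, and you spell out the Jordan decomposition and the canonical model $\X=\bP(I)$, $X_i=\{K: i\in K\}$, which the paper leaves implicit.
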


\begin{proof}
\noindent Assume there exists a finite measure $\nu^g$ defined on
$(\X,\sigma(X_i: \,i\in I))$ such that $g$ satisfies 
(\ref{l0}). If $\nu^g$ is a measure, the expression on 
the right hand side of (\ref{de1}) is nonnegative because it is 
the measure of some event. We use (\ref{syl1}) to state the equality
in (\ref{de1}). Then,  
$Z^{-1} g(J)\ge 0$ for all $J\in \bP(I)$, so $g\in \F_+(\bP(I))$.
If $\nu^g$ is a signed measure we find $g\in \F(\bP(I))=\F_+(\bP(I))-\F_+(\bP(I))$.

\medskip

Conversely, if $g\in \F_+(\bP(I))$ we have ${Z^{-1}} g\ge 0$, so
we can define a measure $\nu^{g*}$ on $\bP(I)$ by the nonnegative weights
$\nu^{g*}(\{J\})=Z^{-1} g(J)$ for $J\in \bP(I)$.
By using that $\Psi$ is an isomorphism and the equality
$$
\{J\}= \bigcap\limits_{i\in J}X^*_i \setminus
\left(\bigcup_{L: L\supseteq J, \, L\neq J}
\; \bigcap\limits_{i\in L}X^*_i\right)\,,
$$
we conclude that the measure $\nu^g=\nu^{g*}\circ \Psi$ 
satisfies (\ref{de1}). 
Also, from the shape of $Z$ we get that
$g(J)=Z({Z^{-1}} g)(J)=\sum_{K: K\supseteq J} \nu^{g*}(\{K\})$ 
for all $J\in \bP(I)$.
Then $g$ satisfies (\ref{l0}).
The linearity property $g\to \nu^{g*}$ is a consequence of the linearity
of $Z^{-1}$ and the final statement on positivity of this application 
follows straightforwardly.
\end{proof}

\begin{proposition}
\label{prop10}
\noindent  We have $g\in \F'_+(\bP(I))$ (respectively $g\in \F'(\bP(I)))$ 
if and only if there exists
a finite measure (respectively a finite signed measure) $\nu_g$ defined
on a measurable space $(\X,\sigma(X_i: i\in I))$ that satisfies
\begin{equation*}
\forall\;  J\in \bP(I): \quad g(J)=\nu_g(\bigcap\limits_{i\in J^c} X_i).
\end{equation*}
In this case,
\begin{equation*}
{Z^{-1}} g(J)=
\nu_g\left(\bigcap\limits_{i\in J^c} X_i \setminus
\left(\bigcup_{L: L\supseteq J^c, \, L\neq J^c} \;\,
\bigcap\limits_{i\in L}X_i \right)\right).
\end{equation*}
Note that $g(I)=\nu_g(\X)$ and $Z^{-1} g(I)=
\nu_g(\X\setminus \bigcup_{i\in I} X_i)$.

\medskip

\noindent For each $g\in \F'_+(\bP(I))$ (respectively  $g\in \F'(\bP(I))$) 
the finite (respectively signed) measure $\nu_g^*=\nu_g\circ \Psi^{-1}$
on $(\bP(I), \bP(\bP(I)))$ satisfies
\begin{equation*}
\forall J\in \bP(I)\,:\quad \nu_g^*(J)={Z^{-1}}' g (J^c)\, \hbox{ and } \,
g(J)=\sum_{K: K\supseteq J^c} \nu_g^*(\{K\}).
\end{equation*}
The function $g\to \nu_g^*$
defined from $\F'(\bP(I))$ into the space of finite signed measures on 
$(\bP(I), \bP(\bP(I)))$, is linear and sends $\F'_+(\bP(I))$ 
into the space of finite measures on $(\bP(I), \bP(\bP(I)))$.
\end{proposition}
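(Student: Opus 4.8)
The plan is to deduce the whole statement from Proposition \ref{prop9} by transport of structure along the complementation involution $c:\bP(I)\to\bP(I)$, $c(J)=J^{c}=I\setminus J$. Since $c$ is an order-reversing bijection of $(\bP(I),\subseteq)$, it conjugates the zeta matrix to its transpose. Concretely, writing $h:=g\circ c$, i.e. $h(J)=g(J^{c})$, I would first verify the identity
\begin{equation*}
({Z^{-1}}'g)(J)=(Z^{-1}h)(J^{c})\qquad\text{for all }J\in\bP(I).
\end{equation*}
This is a one-line change of summation variable: since ${Z^{-1}}'(J,K)=(-1)^{|J|-|K|}\1_{K\subseteq J}$, putting $L=K^{c}$ turns the sum over $\{K:K\subseteq J\}$ into a sum over $\{L:L\supseteq J^{c}\}$, with $|J|-|K|=|L|-|J^{c}|$ on that range, which is precisely $(Z^{-1}h)(J^{c})$. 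Because $c$ is an involution, $g\mapsto h$ is a linear bijection of $\RR^{\bP(I)}$ fixing the cone $\RR_{+}^{\bP(I)}$, and the displayed identity shows it carries $\F'_+(\bP(I))$ onto $\F_+(\bP(I))$ and $\F'(\bP(I))$ onto $\F(\bP(I))$.

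Granting this, the characterization is immediate. If $g\in\F'_+(\bP(I))$ then $h\in\F_+(\bP(I))$, so by Proposition \ref{prop9} there is a finite measure $\nu^{h}$ on $(\X,\sigma(X_i:i\in I))$ with $h(J)=\nu^{h}(\bigcap_{i\in J}X_i)$ for all $J$; putting $\nu_g:=\nu^{h}$ and reading $h(J)=g(J^{c})$ with $J$ replaced by $J^{c}$ gives $g(J)=\nu_g(\bigcap_{i\in J^{c}}X_i)$. Conversely, such a representation of $g$ yields the representation $h(J)=\nu_g(\bigcap_{i\in J}X_i)$ of $h$, whence $h\in\F_+(\bP(I))$ and $g\in\F'_+(\bP(I))$. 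The signed-measure assertion is the same argument with $\F(\cdot)$ in place of $\F_+(\cdot)$.

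For the M\"obius-image formula I would combine $({Z^{-1}}'g)(J)=(Z^{-1}h)(J^{c})$ with identity (\ref{de1}) of Proposition \ref{prop9} applied to $h$ and $\nu_g$, obtaining
\begin{equation*}
({Z^{-1}}'g)(J)=\nu_g\!\left(\bigcap_{i\in J^{c}}X_i\setminus\Big(\bigcup_{L:\,L\supseteq J^{c},\,L\neq J^{c}}\bigcap_{i\in L}X_i\Big)\right),
\end{equation*}
the boundary remarks following from the case $J=I$ (so $J^{c}=\emptyset$ and $\bigcap_{i\in\emptyset}X_i=\X$). For the push-forward, set $\nu_g^{*}:=\nu_g\circ\Psi^{-1}$; this is exactly the measure $\nu^{h*}$ attached to $h$ by Proposition \ref{prop9}, so $\nu_g^{*}(\{J\})=(Z^{-1}h)(J)=({Z^{-1}}'g)(J^{c})$, while $g(J)=h(J^{c})=\sum_{K:\,K\supseteq J^{c}}\nu^{h*}(\{K\})=\sum_{K:\,K\supseteq J^{c}}\nu_g^{*}(\{K\})$. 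Linearity and positivity of $g\mapsto\nu_g^{*}$ are then inherited, being the composition of the linear involution $g\mapsto h$ with the linear, positivity-preserving map $h\mapsto\nu^{h*}$ of Proposition \ref{prop9}.

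The argument is essentially routine, so the only step I expect to require care is the bookkeeping in the complementation identity — in particular checking that after the substitution $L=K^{c}$ the excluded union in the Sylvester set is indexed by $\{L\supseteq J^{c}\}$ (and not by subsets of some set), and that the $\1_{\cdot\subseteq\cdot}$ indicators flip in the right direction. I would confirm this on the extreme cases $J=\emptyset$ and $J=I$ before writing it out in full.
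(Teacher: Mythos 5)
Your proposal is correct and is essentially the paper's own argument: the paper likewise sets $\widehat g(J)=g(J^c)$, verifies ${Z^{-1}}'g(J)=Z^{-1}\widehat g(J^c)$ by the same change of summation variable, and then invokes Proposition \ref{prop9}. Your write-up simply fills in the routine bookkeeping that the paper leaves implicit.
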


\begin{proof}
Define ${\widehat g}(J)=g(J^c)$, $J\in \bP(I)$.
We have
$$
{Z^{-1}}' g(J)=\sum_{K^c: K^c\subseteq J} (-1)^{|J|-|K^c|} g(K^c)
=\sum_{K: K\supseteq J^c} (-1)^{|K|-|J^c|}{\widehat g}(K)=
Z^{-1} {\widehat g}(J^c).
$$
Hence the result is a straightforward consequence
of Proposition \ref{prop9} applied to ${\widehat g}$.
\end{proof}

\subsubsection{M\"{o}bius positive cones for partitions}
\label{SecConespa}
Consider the M\"{o}bius positive cones $\F_+(\aP(I))$, $\F'_+(\aP(I))$ 
and the spaces $\F(\aP(I))=\F_+(\aP(I))-\F_+(\aP(I))$,
$\F'(\aP(I))=\F'_+(\aP(I))-\F'_+(\aP(I))$.
We shall describe them as we did in Propositions \ref{prop9} and \ref{prop10}.
But we will only write the statement for the cones $\F_+(\aP(I))$ 
and $\F(\aP(I))$. A similar statement can be written for  $\F'_+(\aP(I))$ and
$\F'(\aP(I))$, analogously as we did in Proposition \ref{prop10}.  

\begin{proposition}
\label{prop11}
$g\in \F_+(\aP(I))$ (respectively $g\in \F(\aP(I))$) if and only if there exists
a finite (respectively signed) measure $\nu^g$ defined on a 
measurable space $(\X,\sigma^\aP(X_J: J\in \bP(I)))$ that satisfies
\begin{equation}
\label{l0y}
\forall\;  \alpha\in \aP(I): \quad g(\alpha)=
\nu^g(\bigcap\limits_{J: J\vdash \alpha} X_J).
\end{equation}
In this case,
\begin{equation}
\label{de1y}
{Z^{-1}} g(\alpha)=
\nu^g\left(\bigcap\limits_{J: J\vdash \alpha} X_J \setminus
\left(\bigcup_{\gamma: \gamma\succeq \alpha, \, \gamma\neq \alpha} \;\,
\bigcap\limits_{J: J\vdash \gamma}X_J \right)\right).
\end{equation}

For each $g\in \F_+(\aP(I))$ the finite (respectively signed) measure 
$\nu^{*g}=\nu^g\circ \psi^{-1}$ defined on $(\aP(I), \bP(\aP(I)))$ satisfies
\begin{equation*}
\forall \alpha\in \aP(I)\,:\quad \nu^{*g}(\{\alpha\})=Z^{-1} g(\alpha) \,
\hbox{ and } \,  g(\alpha)=
\sum_{\beta: \beta\succeq \alpha} \nu^{*g}(\{\beta\}).
\end{equation*}
The function $g\to \nu^{*g}$ defined from $\F(\aP(I))$ 
into the space of finite signed measures of $(\aP(I), \bP(\aP(I)))$,
is linear and sends $\F_+(\aP(I))$ into the space of finite measures on 
$(\aP(I), \bP(\aP(I)))$.
\end{proposition}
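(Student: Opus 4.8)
The plan is to imitate the proof of Proposition \ref{prop9} line by line, substituting the partition Sylvester formula (\ref{sylvpa}) for the set version (\ref{syl1}) and the algebra isomorphism of Proposition \ref{prop7} for that of Proposition \ref{prop5}. The genuinely new work is contained in Propositions \ref{prop7} and \ref{prop8}, which are already available, so the argument should go through with only notational changes.

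For the ``if'' half I would argue as follows. Given a finite measure $\nu^g$ on $(\X,\sigma^\aP(X_J: J\in\bP(I)))$ satisfying (\ref{l0y}), Proposition \ref{prop8} applied to $\nu^g$ says precisely that
$$Z^{-1}g(\alpha)=\sum_{\beta:\beta\succeq\alpha}\mu(\alpha,\beta)\,\nu^g\Big(\bigcap\limits_{J\vdash\beta}X_J\Big)=\nu^g\Big(\bigcap\limits_{J\vdash\alpha}X_J\setminus\big(\bigcup_{\gamma:\gamma\succeq\alpha,\,\gamma\neq\alpha}\bigcap\limits_{J\vdash\gamma}X_J\big)\Big),$$
which both establishes (\ref{de1y}) and, since the right-hand side is the $\nu^g$-measure of an honest event, shows $Z^{-1}g(\alpha)\ge 0$ for every $\alpha$ whenever $\nu^g$ is a genuine measure; hence $g\in\F_+(\aP(I))$. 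For a signed $\nu^g$ I would pass to its Jordan decomposition $\nu^g=\nu^g_+-\nu^g_-$, note that each of $\alpha\mapsto\nu^g_\pm(\bigcap_{J\vdash\alpha}X_J)$ lies in $\F_+(\aP(I))$ by the case just done, and conclude $g\in\F(\aP(I))$.

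For the converse, starting from $g\in\F_+(\aP(I))$ so that $Z^{-1}g\ge 0$, I would define the finite measure $\nu^{*g}$ on $(\aP(I),\bP(\aP(I)))$ by the nonnegative point masses $\nu^{*g}(\{\alpha\})=Z^{-1}g(\alpha)$, realise the abstract space concretely by taking $\X=\aP(I)$ with the sets $X_J$ prescribed through $\Psi(X_J)=\{\alpha:J\vdash\alpha\}$ as in (\ref{re0yb}), and set $\nu^g=\nu^{*g}\circ\Psi$. Using (\ref{re0ya}), $\Psi(\bigcap_{J\vdash\alpha}X_J)=\{\beta:\beta\succeq\alpha\}$, so $\nu^g(\bigcap_{J\vdash\alpha}X_J)=\sum_{\beta:\beta\succeq\alpha}Z^{-1}g(\beta)=\big(Z(Z^{-1}g)\big)(\alpha)=g(\alpha)$ by the shape of $Z$, which is exactly (\ref{l0y}); then (\ref{de1y}) drops out of Proposition \ref{prop8}. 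For $g\in\F(\aP(I))$ I would write $g=g_1-g_2$ with $g_i\in\F_+(\aP(I))$, form the two associated measures and subtract. Finally, $g\mapsto\nu^{*g}$ is linear because $Z^{-1}$ is, and it carries $\F_+(\aP(I))$ into the finite positive measures by construction.

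I do not anticipate a real obstacle, precisely because the two hard ingredients — the algebra isomorphism of Proposition \ref{prop7} and the partition Sylvester identity of Proposition \ref{prop8} — are already in hand, and the proof is a transcription of Proposition \ref{prop9}. The only place demanding a little care is the signed-measure bookkeeping, and that is dispatched by the Jordan decomposition (equivalently, by the cone decomposition $\F(\aP(I))=\F_+(\aP(I))-\F_+(\aP(I))$ together with linearity) exactly as in the set case.
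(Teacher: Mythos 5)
Your proposal is correct and follows essentially the same route as the paper: the forward implication is obtained by feeding (\ref{l0y}) into the partition Sylvester formula of Proposition \ref{prop8}, and the converse by defining $\nu^{g*}(\{\alpha\})=Z^{-1}g(\alpha)$ and pulling back through the isomorphism $\Psi$ of Proposition \ref{prop7}, exactly as in the paper's transcription of Proposition \ref{prop9}. Your explicit concrete realization $\X=\aP(I)$, $X_J=\{\alpha: J\vdash\alpha\}$ and the Jordan-decomposition bookkeeping for the signed case are harmless elaborations of steps the paper leaves implicit.
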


\begin{proof}
Assume there exists a finite (respectively signed) measure $\nu^g$ defined
on $(\X,\sigma^\aP(X_J: \,J\in \bP(I)))$
such that $g$ satisfies (\ref{l0y}). Relation (\ref{de1y}) is a
consequence of formula (\ref{sylvpa}).
Since this formula is equivalent to (\ref{sylvpaII}), when $\nu^g$ is a measure
we have $Z^{-1} g(\alpha)\ge 0$ for all $\alpha\in \aP(I)$ and so
$g\in \F_+(\aP(I))$. When $\nu^g$ is a signed measure we find $g\in \F(\aP(I))$.

\medskip

Now, let $g\in \F_+(\aP(I))$, so ${Z^{-1}} g\ge 0$. We take the construction
of Proposition \ref{prop8}. We define the measure $\nu^{g*}$ on $\aP(I)$ 
by the nonnegative weights $\nu^{g*}(\{\alpha\})=Z^{-1} g(\alpha)$ for
$\alpha\in \aP(I)$. By using
$\bigcap\limits_{J: J\vdash \alpha} X^*_J=\{\beta: \beta\succeq \alpha\}$
and
$$
\{\alpha\}= \bigcap\limits_{J: J\vdash \alpha}X^*_J \setminus
\left(\bigcup_{\gamma: \gamma\succeq \alpha, \, \gamma\neq \alpha}
\; \bigcap\limits_{J: J\vdash \gamma}X^*_J\right),
$$
we get that $\nu^{g}=\nu^{g*}\circ \Psi$ satisfies (\ref{de1y})
(where $\Psi$ is defined in (\ref{iso1y})). From the shape of $Z$ we find
$$
\forall\, \alpha\in \aP(I)\,:\;\;
g(\alpha)=Z({Z^{-1}} g)(\alpha)=
\sum_{\beta: \beta\succeq \alpha} \nu^{g*}(\{\beta\}).
$$
Then $g$ satisfies (\ref{l0y}).
The linearity property $g\to \nu^{g*}$ is a consequence of the linearity
of $Z^{-1}$ and the final statement on positivity of this application
follows straightforwardly.
\end{proof}

\section{Coarse-graining}
\label{Sec4}
\subsection{Conditions for coarse-graining}
\label{SecCG}
As assumed $\I$ is a finite set. In this paragraph we do not 
require that it is partially ordered. Let $\sim$ be an 
equivalence relation on $\I$ and denote by $\wI$ the 
set of equivalence classes and by 
$\wa=\{b\in \I: b\sim a\}\in \wI$ the equivalence class containing $a$.
As always the equivalence classes are used, either as elements 
of $\wI$ or as subsets of $\I$. At each occasion it will be clear 
from the context in which of the two meanings we will be using them.

\medskip

A function $f:\I\to \RR$ is compatible with $\sim$ if $a\sim b$ implies 
$f(a)=f(b)$. In this case  
$\wf:\wI\to \RR$, $\wa\to \wf(\wa)=f(a)$ is a well defined function.

\medskip

A matrix $H=(H(a,b): a,b\in \I)$ is said to be compatible  
with $\sim$ if for any function $f:\I\to \RR$
compatible with $\sim$ the function $Hf$ is also compatible with
$\sim$, that is $a_1\sim a_2$ implies $Hf(a_1)=Hf(a_2)$.
Since  the set of compatible functions is a linear space generated by 
the characteristic functions of the sets we get that
$H$ is compatible with $\sim$ if and only if it verifies
the following condition, 
$$
\forall \, a_1\sim a_2, \, \forall \, \wb\in \wI\,: \quad
H \1_{\wb}(a_1)=H \1_{\wb}(a_2)\,,
$$
being $\1_{\wb}$ the characteristic function of the set $\wb\subseteq \I$.
Thus, $H$ is compatible with $\sim$ if it satisfies
the conditions known as those of coarse-graining,
\begin{equation}
\label{cgcond}
\forall \, a_1\sim a_2, \, \forall \, \wb\in \wI\,: \quad
\sum_{c\in \wb} H(a_1,c)=\sum_{c\in \wb} H(a_2,c)\,.
\end{equation}
Note that $\1=\sum_{\wb\in \wI} \1_{\wb}$.
So, if $H$ is compatible with $\sim$ we must necessarily have
$\sum_{c\in \I} H(a_1,c)=\sum_{c\in \I} H(a_2,c)$
when $a_1\sim a_2$. Hence, we have proven:
\begin{lemma}
\label{ldefcgm}
Assume $H$ is compatible with $\sim$. Then, the coarse-graining matrix 
$\wH=(\wH(\wa,\wb): \wa, \,\wb \in \wI)$ given by
$$
\forall  \wa, \wb\in \wI\,: 
\quad \wH(\wa,\wb)=\sum_{c\in \wb} H(a,c) \,,
$$   
is well defined and for every $f:\I\to \RR$ compatible with $\sim$ 
it holds
$$
\forall  \wa\in \wI\,: \quad  \wH \wf (\wa)=Hf(a)  \,.
$$
\end{lemma}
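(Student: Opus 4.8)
The plan is to deduce both assertions directly from the coarse-graining conditions (\ref{cgcond}) established just above the statement, since no further structure is needed. First I would verify that $\wH$ is well defined, i.e.\ that the prescription $\wH(\wa,\wb)=\sum_{c\in\wb}H(a,c)$ does not depend on which representative $a$ of the class $\wa$ is used. This is immediate: if $a_1$ and $a_2$ both represent $\wa$, then $a_1\sim a_2$, and applying (\ref{cgcond}) to this pair and to the class $\wb$ gives $\sum_{c\in\wb}H(a_1,c)=\sum_{c\in\wb}H(a_2,c)$. Hence the number $\wH(\wa,\wb)$ depends only on the pair of classes $(\wa,\wb)$, and $\wH$ is a genuine matrix on $\wI\times\wI$.

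Next I would prove the intertwining identity by expanding a $\sim$-compatible $f$ in the basis of class-indicators, exactly the basis invoked in the paragraph preceding the lemma. Since the classes $\{\wb:\wb\in\wI\}$ partition $\I$ and $f$ is constant on each class with value $\wf(\wb)$, we have $f=\sum_{\wb\in\wI}\wf(\wb)\,\1_{\wb}$ as a function on $\I$. Applying $H$ and using linearity together with the definition of $\wH$,
\[
Hf(a)=\sum_{\wb\in\wI}\wf(\wb)\,(H\1_{\wb})(a)=\sum_{\wb\in\wI}\wf(\wb)\sum_{c\in\wb}H(a,c)=\sum_{\wb\in\wI}\wH(\wa,\wb)\,\wf(\wb)=(\wH\wf)(\wa),
\]
which is the asserted formula. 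Here the right-hand side makes sense independently of the chosen representative $a$ precisely because $\wH$ is well defined, while the left-hand side is representative-independent because compatibility of $H$ means $Hf$ is compatible with $\sim$ whenever $f$ is.

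I do not expect any real obstacle here: the only place the hypothesis is used is in establishing representative-independence, and that is exactly what (\ref{cgcond}) was arranged to deliver. The one point I would state with care is the decomposition $f=\sum_{\wb\in\wI}\wf(\wb)\,\1_{\wb}$, noting that it is legitimate because $\{\1_{\wb}:\wb\in\wI\}$ spans the space of $\sim$-compatible functions — the same remark already made before the lemma — and that summing over $\wb\in\wI$ the blocks $\sum_{c\in\wb}$ reassembles the full sum $\sum_{c\in\I}$ since $\1=\sum_{\wb\in\wI}\1_{\wb}$.
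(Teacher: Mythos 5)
Your proof is correct and follows essentially the same route as the paper, which establishes the lemma via the coarse-graining condition (\ref{cgcond}) for well-definedness and the decomposition of a compatible $f$ into the class indicators $\1_{\wb}$ for the identity $\wH\wf(\wa)=Hf(a)$. No gaps.
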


Note that if $H_1$ and $H_2$ are two matrices indexed by $\I\times \I$ 
compatible  with $\sim$ then $H_1+H_2$ and $H_1 H_2$
are compatible with $\sim$. For the sum this is a 
consequence of property (\ref{cgcond}).
For the product of matrices this property is also straightforward: 
let $f:\I\to \RR$ be a function compatible with $\sim$, 
then $H_2 f$ is function compatible with $\sim$ and so 
$H_1 H_2 f$ is also compatible with $\sim$,
proving that $H_1 H_2$ is compatible with $\sim$.

\medskip

Now, we claim that if $H$ is nonsingular and $H$ and $H^{-1}$ 
are both compatible with $\sim$,
then $\wH$ is nonsingular and its inverse $\wH^{-1}$ satisfies
$\wH^{-1}=\widetilde{H^{-1}}$, that is
\begin{equation*}
\forall \, \wa, \, \wb\in \wI\,: \quad
\wH^{-1}(\wa,\wb)=\widetilde{H^{-1}}(\wa , \wb)=
\sum_{c\in \wb} H^{-1}(a,c)\,.
\end{equation*}
In fact since $H$ and $H^{-1}$ are compatible with $\sim$
we get that for all $f:\I\to \RR$ compatible with $\sim$,
$$
\forall  \wa\in \wI\,: \quad
{\widetilde{H^{-1}}}(\wH \wf)(\wa)={\widetilde{H^{-1}}}
{\widetilde{H f}}(\wa)= \widetilde{H^{-1}(H f)(a)}=\wa\,.
$$

Note that for all equivalence relation $\sim$ 
the unit vector $\1$ is compatible with $\sim$. 
In the following result we exploit this fact. We denote by
$\wL$ the unit vector with the dimension of $\wI$.

\begin{lemma}
\label{lem1}
If $P$ is compatible with $\sim$ then the 
coarse-graining matrix $\wP$ preserves positivity,
stochasticity and substochasticity, that is
\begin{equation}
\label{dual3x}
P\ge 0 \, \Rightarrow \, \wP\ge 0\,; \;\;
P \1=\1 \, \Rightarrow \, \wP \wL=\wL\,; \;\;
P \1 \le \1 \, \Rightarrow \, 
\wP \wL \le \wL \,.
\end{equation}
\end{lemma}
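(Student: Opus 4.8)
The plan is to push everything through the defining formula $\wP(\wa,\wb)=\sum_{c\in\wb}P(a,c)$ for the coarse-graining matrix, which by Lemma~\ref{ldefcgm} is meaningful precisely because $P$ is assumed compatible with $\sim$ (so the right-hand side does not depend on the representative $a\in\wa$). The one structural fact I will use repeatedly is that the equivalence classes $\wb\in\wI$ form a partition of $\I$, so that summing over $\wb\in\wI$ and then over $c\in\wb$ amounts to summing over all $c\in\I$.

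For positivity, if $P\ge 0$ then each term $P(a,c)$ in $\wP(\wa,\wb)=\sum_{c\in\wb}P(a,c)$ is nonnegative, whence $\wP(\wa,\wb)\ge 0$ for all $\wa,\wb\in\wI$; thus $\wP\ge 0$. For stochasticity and substochasticity I would compute the row sums of $\wP$: fixing $\wa\in\wI$ and a representative $a\in\wa$,
$$
(\wP\wL)(\wa)=\sum_{\wb\in\wI}\wP(\wa,\wb)
=\sum_{\wb\in\wI}\sum_{c\in\wb}P(a,c)=\sum_{c\in\I}P(a,c)=(P\1)(a),
$$
the middle step being the partition identity above (equivalently, this is the instance $f=\1$, $\wf=\widetilde{\1}=\wL$ of Lemma~\ref{ldefcgm}). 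From this identity, $P\1=\1$ gives $(\wP\wL)(\wa)=1$ for every $\wa$, i.e.\ $\wP\wL=\wL$, and $P\1\le\1$ gives $(\wP\wL)(\wa)=(P\1)(a)\le 1$ for every $\wa$, i.e.\ $\wP\wL\le\wL$ (all inequalities componentwise).

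There is essentially no obstacle to overcome: each assertion is an immediate consequence of the definition of $\wP$ and of Lemma~\ref{ldefcgm}. The only point deserving a word of care is that one must first know $\wP$ is well defined (equivalently, that $P$ is compatible with $\sim$) before manipulating its entries and row sums — but this is exactly the standing hypothesis of the lemma, already encoded in the preparatory Lemma~\ref{ldefcgm}.
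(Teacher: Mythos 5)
Your proposal is correct and follows the paper's argument exactly: positivity is read off from the defining formula for $\wP$, and the stochasticity/substochasticity claims both reduce to the identity $(\wP\wL)(\wa)=(P\1)(a)$, which is the case $f=\1$ of Lemma~\ref{ldefcgm} (equivalently, the partition identity you spell out). Nothing further is needed.
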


\begin{proof}
The positivity is straightforward from the definition of $\wP$.
On the other hand since $\1$ is compatible with $\sim$, 
from $\wP\wL(\wa)=P \1(a)$ we get the last two
relations in (\ref{dual3x}).
\end{proof}

\begin{theorem}
\label{thm1}
Assume the duality relation $Q'=H^{-1} P H$ is satisfied. Let $\sim$
be an equivalence relation on $\I$ such that
the matrices $H$, $ H^{-1}$ and $P$ are compatible with $\sim$.
Then, $\wQ=(\wQ(\wa,\wb): \wa,\wb\in \wI)$ given by
\begin{equation}
\label{cgc2}
\wQ(\wa,\wb)=\sum_{c\in \wa} Q(c,b) \,,
\end{equation}
is a well defined matrix. It satisfies
$Q\ge 0 \, \Rightarrow \, \wQ\ge 0$ and
the following duality relation holds,
\begin{equation}
\label{dual1x}
\wQ'={\widetilde{H}}^{-1} \wP{\widetilde{H}}\,.
\end{equation}

For every strictly positive vector  
$\wah: \wI \to \RR_+$ the following duality relation holds
\begin{equation}
\label{dual4x}
{\wQ}'_{\wah^{\!-\!1}\!,\wah}={\wH}_\wah^{-1} \wP \wH_\wah \,
\hbox{ where } \,
\wH_\wah=\wH D_\wah^{-1} \, \hbox{ and }\, \wQ_{\wah^{\!-\!1}\!,\wah}= 
D_\wah^{-1} \wQ D_\wah\,,
\end{equation}
and positivity is preserved: $Q\ge 0 \, \Rightarrow \, 
{\wQ}_{\wah^{-1}\!,\wah} \ge 0$.

\medskip

For $\wah: \wI \to \RR_+$ defined by
\begin{equation}
\label{dual2x}
\wah(\wa)=|\wa|=|\{c\in \I: c\sim a\}|\,,
\end{equation}
the duality (\ref{dual4x}) preserves 
stochasticity and substochasticity of $Q$,
\begin{equation*}
\left(\,Q \1=\1 \, \Rightarrow \, \wQ_{\wah^{\!-\!1}\!,\wah} \wL
=\wL\right) \; \hbox{ and } \,
\left(\, Q\1\le \1 \, \Rightarrow \, \wQ_{\wah^{\!-\!1}\!,\wah} \wL
\le \wL\right)
\end{equation*}
Hence, if the kernels $P$ and $Q$ are stochastic 
(respectively substochastic) then the kernels $\wP$ and 
$\wQ_{\wah^{\!-\!1}\!,\wah}$ are stochastic (respectively substochastic).
\end{theorem}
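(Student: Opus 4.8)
The plan is to establish the four assertions of Theorem \ref{thm1} in sequence, each reducing to an application of the lemmas already proved in this section. First I would check that $\wQ$ in (\ref{cgc2}) is well defined, i.e. that $\sum_{c\in\wa}Q(c,b)$ does not depend on the representative $a$ of $\wa$. From the duality relation $Q'=H^{-1}PH$ one has $Q=H'P'(H^{-1})' = H'P'(H')^{-1}$; since $H$, $H^{-1}$ and $P$ are compatible with $\sim$, so are their transposes and hence so is $Q$ as a product of matrices compatible with $\sim$ (using the closure under products noted right before Lemma \ref{lem1}). Then $Q$ compatible with $\sim$ is exactly the statement that $\sum_{c\in\wa}Q(c,b)$ is well defined; positivity $Q\ge 0\Rightarrow\wQ\ge 0$ is immediate from the definition. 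For (\ref{dual1x}) I would apply Lemma \ref{ldefcgm} to each of the matrices $H$, $H^{-1}$, $P$: since these are compatible with $\sim$, coarse-graining is a homomorphism for sums and products, and $\widetilde{H^{-1}}=\wH^{-1}$ as shown just before Lemma \ref{lem1}. Taking the coarse-graining of both sides of $Q'=H^{-1}PH$ and matching with the definition (\ref{cgc2}) of $\wQ$ (transposing carefully: coarse-graining of $Q'$ sums over columns, which is the row-sum defining $\wQ$) gives $\wQ'=\wH^{-1}\wP\wH$.

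Next, for (\ref{dual4x}) I would simply invoke Lemma \ref{lem2} with $\wH$ in place of $H$, $\wP$ in place of $P$, $\wQ$ in place of $Q$ and $\wah$ in place of $h$: (\ref{dual3}) of that lemma, applied to (\ref{dual1x}), yields ${\wQ}'_{\wah^{-1},\wah}={\wH}_{\wah}^{-1}\wP\wH_{\wah}$, and the second clause of Lemma \ref{lem2} gives $\wQ\ge 0\Rightarrow\wQ_{\wah^{-1},\wah}\ge 0$; combined with $Q\ge 0\Rightarrow\wQ\ge 0$ already established, this gives the stated implication. The only thing to verify is that $\wah$ is strictly positive, which is clear since each equivalence class is nonempty.

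The substantive step is the last one: the choice $\wah(\wa)=|\wa|$ in (\ref{dual2x}) makes the $h$-transform stochasticity-preserving. By Lemma \ref{lem2}, specifically (\ref{dual4}) applied to $\wQ$ and $\wah$, it suffices to show $\wQ\wah=\wah$ whenever $Q\1=\1$ (and $\wQ\wah\le\wah$ whenever $Q\1\le\1$). Now $\wQ\wah(\wa)=\sum_{\wb\in\wI}\wQ(\wa,\wb)\,\wah(\wb)=\sum_{\wb\in\wI}\bigl(\sum_{c\in\wa}Q(c,b)\bigr)|\wb|$, and since for each fixed $c$ the inner sum $\sum_{\wb}|\wb|\,Q(c,b)$ — where $b$ is any representative of $\wb$ — is not literally $\sum_{d\in\I}Q(c,d)$ unless $Q(c,\cdot)$ is constant on classes, I would instead argue via the identity $\wL = \sum_{\wb\in\wI}\frac{1}{|\wb|}\1_{\wb}$ is not quite right either; the clean route is: $\wah$ corresponds under coarse-graining to the function $h\equiv\1$ on $\I$ in the sense that for any $g$ compatible with $\sim$, $\wg$ against the counting-weighted sum equals $g$ against $\1$. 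Concretely, $\sum_{c\in\wa}Q(c,b)|\wb| = \sum_{c\in\wa}\sum_{d\in\wb}Q(c,d)$, so summing over $\wb$ gives $\sum_{c\in\wa}\sum_{d\in\I}Q(c,d) = \sum_{c\in\wa}(Q\1)(c) = \sum_{c\in\wa}1 = |\wa| = \wah(\wa)$, using $Q\1=\1$. The substochastic case replaces the last two equalities by $\le$. Hence $\wQ_{\wah^{-1},\wah}\wL=\wL$ (resp.\ $\le\wL$). The final sentence then combines this with Lemma \ref{lem1}, which already gives that $\wP$ is stochastic (resp.\ substochastic) when $P$ is.

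I expect the main obstacle to be purely bookkeeping: keeping straight whether a given coarse-graining sums over rows or columns (because the duality relation is stated for $Q'$, not $Q$), and checking that the transpose of a $\sim$-compatible matrix is $\sim$-compatible so that $Q$ itself is compatible and $\wQ$ is well defined. No genuinely new idea is needed beyond Lemmas \ref{lem2}, \ref{ldefcgm} and \ref{lem1}; the key computational insight is the identity $\sum_{d\in\wb}Q(c,d)=\wQ(\wa,\wb)$ is false in general, so one must route the stochasticity argument through the full row sum $\sum_{d\in\I}Q(c,d)$ as above rather than through coarse-grained quantities.
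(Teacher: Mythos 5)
Your overall architecture matches the paper's: well-definedness of $\wQ$ via compatibility, coarse-graining the identity $Q'=H^{-1}PH$ factor by factor using $\widetilde{H^{-1}}=\wH^{-1}$, Lemma \ref{lem2} applied to $(\wH,\wP,\wQ,\wah)$ for (\ref{dual4x}), and the final computation
$\wQ\wah(\wa)=\sum_{c\in\wa}(Q\1)(c)$ for $\wah(\wa)=|\wa|$, including the key identity
$|\wb|\sum_{c\in\wa}Q(c,b)=\sum_{d\in\wb}\sum_{c\in\wa}Q(c,d)$, which is exactly the paper's argument.

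However, your first step contains a genuine error. You derive well-definedness of $\wQ$ by transposing the duality relation to $Q=H'P'(H')^{-1}$ and asserting that the transpose of a $\sim$-compatible matrix is $\sim$-compatible. This is false in general: compatibility of $H$ is condition (\ref{cgcond}), a statement about the sums of each \emph{row} over the equivalence classes, while compatibility of $H'$ is the corresponding statement about columns, and neither implies the other. (For instance, with $\I=\{1,2,3\}$ and classes $\{1,2\}$, $\{3\}$, the matrix with rows $(2,0,0)$, $(1,1,0)$, $(0,0,1)$ is compatible but its transpose is not.) The paper devotes the closing paragraph of Section \ref{SecCG} to precisely this point: in general $\widetilde{H'}\neq {\widetilde H}\,'$ and the two compatibilities are distinct. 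Moreover, even if $Q$ were compatible, that is not the condition you need: in (\ref{cgc2}) the sum runs over the whole class $\wa$ in the first argument, so the representative whose choice must be shown irrelevant is $b\in\wb$ in the second argument — that is, you need compatibility of $Q'$, not of $Q$. Fortunately no transposition is needed at all: $Q'=H^{-1}PH$ is itself a product of $\sim$-compatible matrices, hence compatible, and compatibility of $Q'$ says exactly that $\sum_{c\in\wa}Q(c,b_1)=\sum_{c\in\wa}Q(c,b_2)$ whenever $b_1\sim b_2$. With that one-line repair (which is what the paper does), the remainder of your argument goes through and coincides with the paper's proof.
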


\begin{proof}
From the hypotheses we get that $H^{-1}PH$ is compatible with $\sim$. Hence
$Q'=H^{-1}PH$ is compatible with $\sim$, and so $Q$ satisfies
\begin{equation*}
\forall \, b_1\sim b_2, \, \forall \, \wa\in \wI\,: \quad
\sum_{c\in \wa} Q(c,b_1)=\sum_{c\in \wa} Q(c,b_2)\,.
\end{equation*}
Hence $\wQ$ given by (\ref{cgc2}) is well defined on $\wI$. 
Let us show (\ref{dual1x}).
We must prove $\wQ'(\wa,\wb)=
\left({\widetilde{H^{-1}}}\wP{\widetilde{H}}\right)(\wa,\wb)$
for all $ \wa,\wb\in \wI$.
This relation is implied by the equality
$\sum_{c\in \wb} Q(c,a)=\sum_{c\in \wb} \left(H^{-1}PH\right) (a,c)$
for all $a\in \I$, $\wb\in \wI$,
and this last relation is fulfilled because the duality relation
(\ref{dual1}) is $Q(c,a)=(H^{-1}PH) (a,c)$ for all $a,c\in \I$.

\medskip

From Lemma \ref{lem1} it follows that coarse-graining preserves 
positivity, stochasticity and substochasticity of $P$.
On the other hand, by definition, we have that $Q\ge 0$ implies 
$\wQ\ge 0$. 

\medskip

Let $\wah:\wI\to \RR$ be a non-vanishing vector.
From (\ref{dual3}) we have that duality relation (\ref{dual1x}) implies
duality relation (\ref{dual4x}) for any non-vanishing vector $\wah$.
So, for $\wah$ strictly positive we get the implication
$Q\ge 0 \, \Rightarrow \, {\wQ}_{\wah^{\!-\!1}\!,\wah}=D_\wah^{-1} \wQ D_\wah \ge 0$.

\medskip
 
Now we define $h:\I\to \RR$ by $h(a)=\wah(\wa)$. The
duality relation $Q'=H^{-1} P H$ implies (\ref{dual3}) which is
$$
Q'_{h^{\!-\!1}\!,h}=H_h^{-1} P H_h \,
\hbox{ where } \,
H_h=H D_h^{-1} \, \hbox{ and }\, Q_{h^{\!-\!1}\!,h}= D_h^{-1} Q D_h\,.
$$
On the other hand the diagonal matrices
$D_h$ and $D_h^{-1}$ preserve $\sim$ and their coarse-graining matrices
are ${\widetilde {D_h}}=D_\wah$ and ${\widetilde {D_h^{-1}}}=D_\wah^{-1}$.
Then
$$
\left({\wH}_\wah^{-1} \wP \wH_\wah\right) (\wa,\wb)=\sum_{c\in \wb}
\left(H_h^{-1} P H_h\right) (a,c)=\sum_{c\in \wb}Q'_{h^{\!-\!1}\!,h}(a,c)=
\sum_{c\in \wb} D_h Q' D_h^{-1}(a,c).
$$
By the same argument and by definition of $\wQ_{\wah^{\!-\!1}\!,\wah}$ 
and $\wQ$ we get
$$
{\wQ}'_{\wah^{\!-\!1}\!,\wah}(\wa,\wb)=D_\wah \wQ' D^{-1}_\wah(\wa,\wb)
=\sum_{c\in \wb} D_h Q' D_h^{-1}(a,c).
$$ 
So, duality relation (\ref{dual4x}) is satisfied:
${\wQ}'_{\wah^{\!-\!1}\!,\wah}={\wH}_\wah^{-1} \wP \wH_\wah$.
Hence, (\ref{dual4}) implies that $\wQ_{\wah^{\!-\!1}\!,\wah} \wL=\wL$ is satisfied 
if and only if $\wQ\wah=\wah$, so if and only if
$\wah$ is a right eigenvector of $\wQ$ with eigenvalue $1$. Let us check that
$\wah$ defined in (\ref{dual2x}) is such an eigenvector. 

\medskip

We have
$$
\wQ \wah(\wa)=
\sum_{\wb\in \wI} \wQ(\wa,\wb)\wah(\wb)=
\sum_{\wb\in \wI} \sum_{c\in \wa} Q(c,b)\wah(\wb) 
=\sum_{\wb\in \wI} |\wb|\left(\sum_{c\in \wa} Q(c,b)\right).
$$
Since $\sum\limits_{c\in \wa} Q(c,d)$ does not depend on $d\in \wb$ we get
$|\wb|\left(\sum\limits_{c\in \wa} Q(c,b)\right)=\sum\limits_{d \in \wb} 
\sum\limits_{c\in \wa}Q(c,d)$ and so
\begin{equation}
\label{inter}
\wQ \wah(\wa)=\sum_{\wb\in \wI} \sum_{d \in \wb} \sum_{c\in \wa} Q(c,d)=
\sum_{c\in \wa}\left(\sum_{d\in \I} Q(c,d)\right).
\end{equation}
So, if $Q$ is stochastic we obtain $\sum_{d\in I} Q(c,d)=1$ for all $c\in I$ 
and we deduce
$$
\wQ \wah(\wa)=\sum_{c\in \wa} 1=|\wa|=\wah(\wa).
$$
We have shown that stochasticity is preserved: 
$Q \1=\1 \, \Rightarrow \, \wQ_{\wah^{\!-\!1}\!,\wah}\wL=\wL$. 

\medskip

The proof that substochasticity is also preserved 
is entirely similar. In fact the above arguments show the equivalence
$(\wQ_{\wah^{\!-\!1}\!,\wah} \wL \le \wL)
\Leftrightarrow (\,\wQ\wah\le \wah)$.
Now, $Q$ substochastic means $\sum_{d\in \I} Q(c,d)\le 1$ for all $c\in \I$.
We replace it in (\ref{inter}) to obtain $\wQ \wah(\wa)\le \wah(\wa)$
for $\wah$ given by (\ref{dual2x}). Therefore, the result is shown.
\end{proof}

As it is clear from the above computations, in general
$Q\1=\1$ (respectively $Q\1\le \1$) 
does not imply $\wQ\wL=\wL$
(respectively $\wQ\wL\le \wL$).
But it does when the function $\wah(\wa)=|\wa|$ is constant, 
because in this case $\wQ_{\wah^{\!-\!1}\!,\wah}=\wQ$. 

\medskip

A precision is required on transpose matrices and coarse-graining.
When the transpose matrix $H'$ is compatible with $\sim$, the 
matrix $\widetilde {H'}$ denotes its coarse-graining matrix. So,
$$
\widetilde {H'}(\wa,\wb)=\sum_{c\in \wb} H'(a,c)=\sum_{c\in \wb} H(c,a)\,.
$$
If $H$ is also compatible with $\sim$ then ${\widetilde H}\,'$ is the 
transpose of the coarse-graining matrix ${\widetilde H}$. 
In general the matrices $\widetilde {H'}$ and ${\widetilde H}\,'$
are not equal. In fact
$$
{\widetilde H}\,'(\wa,\wb)={\widetilde H}(\wb,\wa)=\sum_{c\in \wa} H(b,c)\,.
$$
Therefore we must take care in the notations. 
Thus, when $\widetilde {H'}$ is nonsingular the matrix 
${\widetilde {H'}}\,{}^{-1}$ is its inverse and if ${\widetilde H}$ 
is nonsingular then ${{\widetilde H}'}\,{}^{-1}={{\widetilde H}^{-1}}\,'$ 
is the inverse of the matrix ${\widetilde H}\,'$. In general 
the matrices ${\widetilde {H'}}\,{}^{-1}$ and ${{\widetilde H}\,'}\,{}^{-1}$ 
are not equal. But as noted, when these inverses exist we have 
the equalities 
$$ {\widetilde 
{H'}}\,{}^{-1}={\widetilde{{H'}{}^{-1}}} \hbox{ and } 
{{\widetilde H}\,'}\,{}^{-1}={{\widetilde {H^{-1}}}\,'}\,. 
$$

\subsection{Coarse-graining product formula} 
Let $(\I_r,\preceq_r)$ be a partially 
ordered space with M\"{o}bius functions $\mu_r$, for $r=1,2$. 
Recall that the M\"{o}bius function for the product space 
$(\I_1 \times \I_2,\preceq_{1,2})$ is 
given by $\mu((a_1,a_2),(b_1,b_2))=\mu_1(a_1,b_1)\mu_2(a_2,b_2)$ when 
$a_1\preceq_1 b_1,\, a_2\preceq_2 b_2$ (see (\ref{mu2})). 
Let $Z_r$ be the zeta matrix associated to $(\I_r,\preceq_r)$ 
for $r=1,2$ and $Z_{1,2}$ be the zeta matrix associated to the 
product space $(\I_1 \times \I_2,\preceq_{1,2})$.

\medskip

Let $\sim_1$ and $\sim_2$ be two equivalence relations
on $\I_1$ and $\I_2$ respectively. Then the product relation
$\sim_{1,2}$ defined on $\I_1 \times \I_2$ by 
$(a_1,a_2)\sim_{1,2} (b_1,b_2)$ if
$a_1\sim_1 b_1$ and $a_2\sim_2 b_2$, is an equivalence relation.
From definition we get 
${\widetilde{(a_1,a_2)}}=\wa_1\times \wa_2$ for all $(a_1,a_2)\in \I_1\times \I_2$.

\begin{proposition}
\label{prop12}
If $Z_r$ is compatible with $\sim_r$ for $r=1,2$, then
$Z_{1,2}$ is compatible with the product equivalence relation $\sim_{1,2}$
and the coarse-graining matrix is given by
\begin{equation}
\label{pcg1}
\widetilde{Z_{1,2}}((\wa_1,\wa_2),(\wb_1,\wb_2))=
{\widetilde{Z_1}}(\wa_1,\wb_1)\cdot {\widetilde{Z_2}}(\wa_2,\wb_2).
\end{equation}
Also, if $Z_r^{-1}$ is compatible with $\sim_r$ for $r=1,2$, then
$Z_{1,2}^{-1}$ is compatible with $\sim_{1,2}$ and
\begin{equation}
\label{pcg2}
\widetilde{Z_{1,2}^{-1}}((\wa_1,\wa_2),(\wb_1,\wb_2))=
{\widetilde{Z_1^{-1}}}(\wa_1,\wb_1)
\cdot {\widetilde{Z_2^{-1}}}(\wa_2,\wb_2).
\end{equation}
Similar statements and formulae can be stated for the transpose zeta 
and M\"{o}bius matrices.
\end{proposition}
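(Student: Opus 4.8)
The plan is to use the product formula to recognize $Z_{1,2}$ as a Kronecker-type product of $Z_1$ and $Z_2$, and then to verify the coarse-graining condition (\ref{cgcond}) by a direct computation that exploits the fact (noted just before the proposition) that the equivalence classes of $\sim_{1,2}$ are precisely the products $\wa_1\times\wa_2$.

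First I would record the entrywise factorizations. By the first line of (\ref{pro1}) we have $Z_{1,2}((a_1,a_2),(b_1,b_2))=Z_1(a_1,b_1)\,Z_2(a_2,b_2)$ for all $(a_1,a_2),(b_1,b_2)\in\I_1\times\I_2$, and by the second line of (\ref{pro1}) --- equivalently by (\ref{pro2}) --- the same identity holds with each $Z_r$ replaced by $Z_r^{-1}$. Since $Z'_{1,2}$ and ${Z'_{1,2}}^{-1}$ are obtained from $Z_{1,2}$ and $Z_{1,2}^{-1}$ by swapping the two arguments, transposition preserves this product structure, so $Z'_{1,2}$ factors as the product of $Z'_1$ and $Z'_2$, and ${Z'_{1,2}}^{-1}$ as the product of ${Z'_1}^{-1}$ and ${Z'_2}^{-1}$.

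Next I would check compatibility. Fix $(a_1,a_2)\sim_{1,2}(a'_1,a'_2)$, i.e. $a_1\sim_1 a'_1$ and $a_2\sim_2 a'_2$, and fix an equivalence class of $\sim_{1,2}$; it has the form $\wb_1\times\wb_2$ with $\wb_r\in\wI_r$. Then
$$
\sum_{(c_1,c_2)\in\wb_1\times\wb_2}Z_{1,2}\bigl((a_1,a_2),(c_1,c_2)\bigr)=\Bigl(\sum_{c_1\in\wb_1}Z_1(a_1,c_1)\Bigr)\Bigl(\sum_{c_2\in\wb_2}Z_2(a_2,c_2)\Bigr).
$$
Because $Z_1$ is compatible with $\sim_1$ and $a_1\sim_1 a'_1$, the first factor equals $\sum_{c_1\in\wb_1}Z_1(a'_1,c_1)={\widetilde{Z_1}}(\wa_1,\wb_1)$, and likewise the second equals ${\widetilde{Z_2}}(\wa_2,\wb_2)$. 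Hence the left-hand side depends only on the classes $\wa_1,\wa_2,\wb_1,\wb_2$, which is exactly condition (\ref{cgcond}) for $Z_{1,2}$ relative to $\sim_{1,2}$; reading off the coarse-graining matrix as in Lemma \ref{ldefcgm} then gives (\ref{pcg1}).

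Finally I would repeat the computation verbatim with each $Z_r^{-1}$ in place of $Z_r$ --- legitimate since the $Z_r^{-1}$ are assumed compatible with the $\sim_r$ and, by the first step, $Z_{1,2}^{-1}$ is their product --- obtaining compatibility of $Z_{1,2}^{-1}$ with $\sim_{1,2}$ together with formula (\ref{pcg2}); compatibility of $Z_{1,2}^{-1}$ could alternatively be derived from the general observation in Section \ref{SecCG} that $\wH$ is nonsingular with inverse $\widetilde{H^{-1}}$ whenever $H$ and $H^{-1}$ are both compatible with $\sim$. Running the same argument once more with the transposed matrices $Z'_1,Z'_2$ and with ${Z'_1}^{-1},{Z'_2}^{-1}$ yields the stated analogues for the transpose zeta and M\"obius matrices. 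No step presents a genuine difficulty; the only point to watch is the bookkeeping that a $\sim_{1,2}$-class is exactly a product $\wa_1\times\wa_2$, so that the defining double sum over such a class splits as a product of a sum over an $\wI_1$-class and a sum over an $\wI_2$-class.
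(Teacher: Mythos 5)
Your proposal is correct and follows essentially the same route as the paper: both arguments rest on the entrywise product factorization of $Z_{1,2}$ and $Z_{1,2}^{-1}$ coming from (\ref{pro1})--(\ref{pro2}), the observation that a $\sim_{1,2}$-class is a product $\wb_1\times\wb_2$ so the sum over it splits as a product of two sums (the paper phrases this as $Z_{1,2}\1_{\wb_1\times\wb_2}(a_1,a_2)=Z_1\1_{\wb_1}(a_1)\,Z_2\1_{\wb_2}(a_2)$), and the compatibility of each factor. No gaps.
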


\begin{proof}
Assume $(a_1,a_2)\sim (a'_1,a'_2)$.
From the product formula (\ref{pro1}) we get
$$
\{(c_1,c_2)\in \wb_1\times\wb_2: (c_1,c_2)\preceq_{1,2} (a_1,a_2)\}
= \{c_1\in \wb_1: c_1\preceq_{1} b_1\}\times
\{c_2\in \wb_2: c_2\preceq_{2} b_2\}.
$$
Then,
\begin{equation}
\label{pcg3}
Z_{1,2} \1_{ \wb_1\times\wb_2}(a_1,a_2)=
Z_1\1_{\wb_1}(a_1)Z_2 \1_{\wb_2}(a_1).
\end{equation}
(Also see (\ref{pro2})).
From our hypothesis we have $a'_r\sim_r a_r$ implies
$Z_r\1_{\wb_r}(a_r)=Z_r \1_{\wb_r}(a'_r)$ for $r=1,2$. Therefore,
$$
(a'_1, a'_2)\sim_{1,2} (a_1, a_2)  \Rightarrow
Z_{1,2} \1_{ \wb_1\times\wb_2}(a_1,a_2)=
Z_{1,2} \1_{ \wb_1\times\wb_2}(a'_1,a'_2).
$$
We have proven that the zeta matrix $Z_{1,2}$ is compatible with $\sim_{1,2}$.
Relation (\ref{pcg3}) gives (\ref{pcg1}).

\medskip

Now assume the M\"{o}bius matrices $Z_r^{-1}$ is compatible with $\sim_r$
so $Z_r^{-1}\1_{\wb_r}(a_r)=Z_r^{-1} \1_{\wb_r}(a'_r)$ in the
above setting, for $r=1,2$.
Also from the product formulae (\ref{pro1}) and (\ref{pro2}) we obtain
\begin{equation}
\label{pcg4}
Z_{1,2}^{-1} \1_{ \wb_1\times\wb_2}(a_1,a_2)=
Z_1^{-1}\1_{\wb_1}(a_1)Z_2^{-1} \1_{\wb_2}(a_1).
\end{equation}
Then,
$$
(a'_1, a'_2)\sim_{1,2} (a_1, a_2) \, \Rightarrow \,
Z_{1,2}^{-1} \1_{ \wb_1\times\wb_2}(a_1,a_2)=
Z_{1,2}^{-1} \1_{ \wb_1\times\wb_2}(a'_1,a'_2),
$$
proving that the  M\"{o}bius matrix $Z_{1,2}^{-1}$
is compatible with $\sim_{1,2}$. Equality (\ref{pcg4}) gives (\ref{pcg2}).
\end{proof}

\subsection{Coarse-Graining on zeta and M\"{o}bius matrices on sets and partitions}
\label{SecCGse2}
$(\I,\preceq)$ be a partially
ordered space with M\"{o}bius function $\mu$.
Let $\sim$ be an equivalence relation on $\I$. By definition, the zeta matrix
$Z$ is compatible with $\sim$ if and only if we have
\begin{equation}
\label{pcg5}
a_1\sim a_2 \Rightarrow \, \left(\forall \wb\in \wI: |\{c\in \wb: a_1\preceq c\}|=
 |\{c\in \wb: a_2\preceq c\}| \right).
\end{equation}
Similarly, $Z'$ is compatible with $\sim$ if and only if
\begin{equation}
\label{pcg6}
a_1\sim a_2 \Rightarrow \, \left(\forall \wb\in \wI: |\{c\in \wb: c\preceq a_1\}|=
 |\{c\in \wb: c\preceq a_2\}|\right),
\end{equation}
When the previous conditions hold we get
\begin{equation}
\label{pcg11}
{\widetilde Z}(\wa,\wb)=|\{c\in \wb: a\preceq c\}|\,,\quad 
{\widetilde {Z'}}(\wa,\wb)=|\{c\in \wb: c\preceq a\}|\,.
\end{equation}

Hence a sufficient condition for having zeta and M\"{o}bius 
compatibility with $\sim$ is the following one.

\begin{proposition}
\label{prop15}
Assume for all couple $a_1,a_2\in \I$ with $a_1\sim a_2$
there exists a bijection $\pi:\I\to \I$ such that: 
\begin{equation}
\label{pcg7}
\pi(a_1)=a_2\,;
\end{equation}
\begin{equation}
\label{pcg8}
c\preceq d\, \Leftrightarrow \, \pi(c)\preceq \pi(d)
\quad (\hbox{that is } \pi \hbox{ is an automorphism of } (\I,\preceq));
\end{equation}
\begin{equation}
\label{pcg9}
\forall \, \wb\in \wI:\;\, 
\pi(\wb)=\wb \hbox{ and } \pi:\wb\to \wb \hbox { is a bijection }.
\end{equation}
Then, $Z$, $Z'$, $Z^{-1}$ and ${Z'}{}^{-1}$ are compatible with $\sim$. 
\end{proposition}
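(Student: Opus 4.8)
The plan is to reduce the whole proposition to one elementary observation: each of the matrices $Z$, $Z'$, $Z^{-1}$, ${Z'}^{-1}$ is invariant under conjugation by the permutation attached to an order automorphism of $(\I,\preceq)$, and an automorphism satisfying (\ref{pcg9}) permutes the fibres of $\sim$. Once this is in place, compatibility with $\sim$ follows from the reindexing of a single sum, and the statement for the four matrices is obtained by checking the invariance property for each of them.

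First I would fix $a_1\sim a_2$ and choose, as in the hypothesis, a bijection $\pi:\I\to\I$ satisfying (\ref{pcg7})--(\ref{pcg9}). The key lemma I would isolate is this: if a matrix $H=(H(a,b):a,b\in\I)$ satisfies $H(a,b)=H(\pi(a),\pi(b))$ for all $a,b\in\I$, then for every $\wb\in\wI$
\[
\sum_{c\in\wb}H(a_1,c)=\sum_{c\in\wb}H(\pi(a_1),\pi(c))=\sum_{c\in\wb}H(a_2,c),
\]
where the first equality is the invariance of $H$, and the second uses $\pi(a_1)=a_2$ together with (\ref{pcg9}), which says $\pi$ restricts to a bijection of $\wb$ onto itself, so the change of variable $c'=\pi(c)$ does not alter the sum. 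By the characterization (\ref{cgcond}) this says $H$ is compatible with $\sim$. For $H=Z$ and $H=Z'$ the invariance property is immediate from (\ref{pcg8}): $Z(a,b)=\1_{a\preceq b}=\1_{\pi(a)\preceq\pi(b)}=Z(\pi(a),\pi(b))$, and likewise $Z'(a,b)=\1_{b\preceq a}=\1_{\pi(b)\preceq\pi(a)}=Z'(\pi(a),\pi(b))$.

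For $Z^{-1}$ and ${Z'}^{-1}$ I would invoke the fact, already recorded in the excerpt when isomorphic partially ordered spaces were discussed, that an order isomorphism carries the M\"obius function to the M\"obius function; applied to the automorphism $\pi$ of $(\I,\preceq)$ this yields $\mu(a,b)=\mu(\pi(a),\pi(b))$ whenever $a\preceq b$. Combining this with (\ref{pcg8}) gives $Z^{-1}(a,b)=\mu(a,b)\1_{a\preceq b}=\mu(\pi(a),\pi(b))\1_{\pi(a)\preceq\pi(b)}=Z^{-1}(\pi(a),\pi(b))$, and the same for ${Z'}^{-1}(a,b)=\mu(b,a)\1_{b\preceq a}$. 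So the key lemma applies to all four matrices, and they are all compatible with $\sim$; the formulas (\ref{pcg11}) for $\widetilde Z$ and $\widetilde{Z'}$ then drop out. The only point that is more than bookkeeping is precisely this: the characterizations (\ref{pcg5})--(\ref{pcg6}) are phrased only for the (nonnegative) matrices $Z$, $Z'$, so the $Z^{-1}$, ${Z'}^{-1}$ cases are not literally instances of them; one must either rerun the counting argument with the signed M\"obius weights or, as above, reduce to the automorphism-invariance of $\mu$. I would take the latter route, since that invariance is already available. (One cannot instead simply quote the earlier remark that $\widetilde H$ is invertible with $\widetilde H^{-1}=\widetilde{H^{-1}}$ when $H$ and $H^{-1}$ are compatible, because that remark already presupposes compatibility of $H^{-1}$, which is part of what is being proved.)
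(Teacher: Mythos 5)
Your proposal is correct and follows essentially the same route as the paper: the paper likewise verifies compatibility of $Z$ and $Z'$ by noting that $\pi$ maps $\{c\in\wb: a_1\preceq c\}$ bijectively onto $\{c\in\wb: a_2\preceq c\}$ (and similarly for the reversed order), and then handles $Z^{-1}$ and ${Z'}^{-1}$ by the same change of variables $c\mapsto\pi(c)$ in the sum over $\wb$ together with the automorphism-invariance $\mu(c,d)=\mu(\pi(c),\pi(d))$. Your only departure is cosmetic — packaging all four cases under one lemma about matrices satisfying $H(a,b)=H(\pi(a),\pi(b))$ — and your parenthetical caution about not invoking $\widetilde{H^{-1}}=\widetilde{H}^{\,-1}$ (which presupposes what is to be proved) is well taken.
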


\begin{proof}
The conditions imply
$$
\forall \wb\in \wI:\;\, 
\pi(\{c\!\in \!\wb: a_1\!\preceq \!c\})\!=\!\{c\!\in \!\wb: a_2\!\preceq \!c\},\;\,
\pi(\{c\!\in \!\wb: c\!\preceq \!a_1\})\!=\!\{c\!\in \!\wb: c\!\preceq \!a_2\}.
$$ 
Then, (\ref{pcg5}) and (\ref{pcg6}) are satisfied, so $Z$ and $Z'$ are
compatible with $\sim$. Since property (\ref{pcg8}) ensures
that $\pi$ is an isomorphism of $(\I,\preceq)$ into itself, then 
the M\"{o}bius function satisfies $\mu(c,d)=\mu(\pi(c),\pi(d))$ for all $c,d\in \I$. 
Hence, 
\begin{eqnarray*}
\sum_{c\in \wb}Z^{-1}(a_1,c)&=&\sum_{c\in \wb, a_1\preceq c} \mu(a_1,c)=
\sum_{\pi(c)\in \wb, \pi(a_1)\preceq \pi(c)} \mu(\pi(a_1),\pi(c))\\
&=&
\sum_{c\in \wb, a_2\preceq c} \mu(a_2,c).
\end{eqnarray*}
Similarly for ${Z'}^{-1}$. Then, the result is shown.
\end{proof}

\begin{remark}
\label{rem3}
Assume that the following property holds for all $a'\sim a$
and $b'\sim b$:
$$
\mu(a,b)=\mu(a',b') \hbox{ and } \left( a\preceq b
\,\Rightarrow a'\preceq b'\right)\,. 
$$
Then, $\; \wa {\widetilde{\preceq}} \wb \Leftrightarrow a\preceq b$
is a well defined order relation in $\wI$. Moreover, 
$\mu(\wa, \wb)=|\wb| \mu(a,b)$ is the  M\"{o}bius function
for $(\wI,{\widetilde{\preceq}})$. We have
${\widetilde Z}(\wa,\wb)=\1_{\wa{\widetilde{\preceq}} \wb}$ and 
${\widetilde Z}^{-1}(\wa,\wb)=\1_{\wa{\widetilde{\preceq}} \wb}
\mu(\wa, \wb)$. When the above properties are satisfied, they also hold
for the product equivalence relation and the product order.
\end{remark}

In the sequel, $I$ is a finite set and $N=|I|$ denotes 
its cardinality, so whenever needed we can assume $I=\aI_N$.

\subsubsection{Coarse-Graining on zeta and M\"{o}bius matrices on sets
and product of sets}
\label{SecCGse}
On $\I=\bP(I)$ consider the equivalence relation $\sim$ given by 
$J\sim K$ if $|J|=|K|$. In this case the set of equivalence classes
admits the following identification $\widetilde {\bP(I)}=\aI_N^0$ where  
$\aI_N^0=\{0,..,N\}$. 

\begin{proposition}
\label{prop13}
The matrices $Z$,  $Z^{-1}$ $Z'$ and ${Z'}{}^{-1}$
are all compatible with $\sim$. For $j,k\in \aI_N^0$ the $(j,k)$-entry
of the coarse-graining matrices are:
\begin{eqnarray}
\nonumber
&{}&{\widetilde Z}(j,k)=\binom{N-j}{k-j} \1_{j\le k}\,; \;\;
{\widetilde {Z^{-1}}}(j,k)=\binom{N-j}{k-j} (-1)^{k-j}\1_{j\le k}\,;\\
\label{coarset}
&{}& {\widetilde {Z'}}(j,k)=\binom{j}{k} \1_{k\le j}\, ; \;\;
{\widetilde {{Z'}{}^{-1}}}(j,k)=\binom{j}{k} (-1)^{j-k}\1_{k\le j}\,.
\end{eqnarray}
\end{proposition}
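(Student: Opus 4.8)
The plan is to verify the hypotheses of Proposition \ref{prop15} and then compute the coarse-graining entries directly from (\ref{pcg11}), using the explicit M\"obius function (\ref{prx2}) for sets. First I would establish compatibility. Given $J\sim K$, i.e. $|J|=|K|$, I would take any bijection $\sigma:I\to I$ with $\sigma(J)=K$ (such a $\sigma$ exists since the two sets have equal cardinality) and let $\pi:\bP(I)\to\bP(I)$ be the induced map $\pi(L)=\sigma(L)$. This $\pi$ is a bijection of $\bP(I)$, it satisfies $\pi(J)=K$ so (\ref{pcg7}) holds; it preserves inclusion in both directions since $L\subseteq M\Leftrightarrow \sigma(L)\subseteq\sigma(M)$, giving (\ref{pcg8}); and it preserves cardinality, hence maps each equivalence class $\wb$ (a level set $\{L:|L|=k\}$) bijectively onto itself, giving (\ref{pcg9}). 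By Proposition \ref{prop15}, $Z$, $Z'$, $Z^{-1}$ and ${Z'}^{-1}$ are all compatible with $\sim$.

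Next I would compute the entries. By (\ref{pcg11}), for $J$ with $|J|=j$,
\[
{\widetilde Z}(j,k)=|\{L\in\bP(I):|L|=k,\ J\subseteq L\}|,
\]
and an inclusion $J\subseteq L$ with $|L|=k$ is obtained by adjoining $k-j$ elements chosen from $I\setminus J$, a set of size $N-j$; this gives $\binom{N-j}{k-j}\1_{j\le k}$. Similarly ${\widetilde{Z'}}(j,k)=|\{L:|L|=k,\ L\subseteq J\}|=\binom{j}{k}\1_{k\le j}$, since a $k$-element subset of $J$ is any choice of $k$ of the $j$ elements of $J$. For the M\"obius side, Lemma \ref{ldefcgm} together with compatibility gives ${\widetilde{Z^{-1}}}(\wa,\wb)=\sum_{c\in\wb}Z^{-1}(a,c)$; using $Z^{-1}(J,L)=(-1)^{|L|-|J|}\1_{J\subseteq L}$ from (\ref{prx2}) and the count just performed,
\[
{\widetilde{Z^{-1}}}(j,k)=\sum_{L:|L|=k,\ J\subseteq L}(-1)^{k-j}=\binom{N-j}{k-j}(-1)^{k-j}\1_{j\le k},
\]
and analogously ${\widetilde{{Z'}^{-1}}}(j,k)=\binom{j}{k}(-1)^{j-k}\1_{k\le j}$ using ${Z'}^{-1}(J,L)=(-1)^{|J|-|L|}\1_{L\subseteq J}$. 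This yields exactly (\ref{coarset}).

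I do not expect a serious obstacle here: the content is entirely the observation that the symmetric group acts on $\bP(I)$ by automorphisms transitively on each cardinality level, which is precisely what Proposition \ref{prop15} was set up to exploit, plus elementary binomial counting. The only point requiring a little care is bookkeeping of the indicator functions (the direction of the inequality between $j$ and $k$ differs for $Z$ versus $Z'$), and checking that the coarse-graining matrix of $Z^{-1}$ really is $\widetilde{Z^{-1}}$ rather than $(\widetilde Z)^{-1}$ — but this is guaranteed by the general fact, proved in Section \ref{SecCG}, that $\wH^{-1}=\widetilde{H^{-1}}$ whenever $H$ and $H^{-1}$ are both compatible with $\sim$, which we have just verified. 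One may also note the consistency check that $\widetilde Z$ and $\widetilde{Z^{-1}}$ as given are genuinely inverse to each other, which reduces to the standard identity $\sum_{k}\binom{N-j}{k-j}\binom{N-k}{m-k}(-1)^{m-k}=\1_{j=m}$.
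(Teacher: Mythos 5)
Your proposal is correct and follows essentially the same route as the paper's proof: verify the hypotheses of Proposition \ref{prop15} via the permutation of $\bP(I)$ induced by a point bijection of $I$ carrying $J$ to $K$, and then obtain the four entries from (\ref{pcg11}) and the explicit M\"obius function by the elementary binomial counts $|\{L:|L|=k,\,J\subseteq L\}|=\binom{N-j}{k-j}$ and $|\{L:|L|=k,\,L\subseteq J\}|=\binom{j}{k}$. The closing consistency check that $\widetilde Z$ and $\widetilde{Z^{-1}}$ are mutually inverse is a harmless extra not present in the paper.
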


\begin{proof}
Let us check that the hypotheses of Proposition \ref{prop15} are satisfied.
Let $J,K\in \bP(I)$ be such that $|J|=|K|$. Let ${\widehat \pi}:I\to I$ 
be a bijection satisfying ${\widehat \pi}(J)=K$. 
Since $\mu(L,M)=(-1)^{|M|-|L|}$ when $L\subseteq M$, 
it is easy to see that $\pi: \bP(I)\to  \bP(I)$ defined by $\pi(L)=M$ 
(as elements) if and only if ${\widehat \pi}(L)=M$ (as sets), is
a bijection satisfying the hypotheses of Proposition \ref{prop15}. 
Then $Z$, $Z^{-1}$, $Z'$ and ${Z'}{}^{-1}$ are compatible with $\sim$.

\medskip

Let $j,k\in \aI_N^0$ and $J\in \bP(I)$ with $j=|J|$. When $k\ge j$ we have
$|\{L\in \bP(I): J\subseteq L, |L|=k\}|=\binom{N-j}{k-j}$. Also 
$\mu(J,L)=(-1)^{k-j}$ for any $L\supseteq J$ with $|L|=k$. This gives the first 
two equalities in (\ref{coarset}). On the other hand if $k\le j$
then $|\{L\in \bP(I): L\subseteq J, |L|=k\}|=\binom{j}{k}$ 
and $\mu(L,J)=(-1)^{j-k}$ for any $L\subseteq J$ with $|L|=k$. This gives 
the last two equalities in (\ref{coarset}). This finishes the proof.
\end{proof}

Let us consider the product space $\bP(I)^{T}$ endowed with the product 
order noted by
$\subseteq$. By he isomorphism (\ref{prx3}) all the relations and formulae
obtained for the class of sets continue to hold for the class 
of product of sets. 
Nevertheless, let us give explicitly the coarse-graining 
relations. On the class of product of sets we consider the
equivalence relation ${\vec J}\sim {\vec K}$ if $|J_t|=|K_t|$ for
all $t\in \aI_T$. 
Recall that the M\"{o}bius function of $(\bP(I)^T, \subseteq)$ is
$\mu({\vec J}, {\vec K})=
(-1)^{\sum_{t\in \aI_T}(|K_t|-|J_t|)}\1_{{\vec J}\subseteq {\vec K}}$.
From Propositions \ref{prop13} and \ref{prop12} we get that
the zeta matrix $Z$ and the M\"{o}bius matrix satisfy the coarse-graining
relations with respect to $\sim$. 

\medskip

The set of 
equivalence classes $\widetilde{\bP(I)^{T}}$ 
is naturally identified with $(I^0_N)^T$ which 
is endowed with the product partial order $\le$.
The elements of $(I^0_N)^T$ are written 
${\vec j}=(j_t: t\in \aI_T)$ and so,
${\vec j}\le {\vec k}$ when $j_t\le k_t \;\forall t\in \aI_T$. If
${\vec j}\le {\vec k}$ we denote
$$
\binom{\vec k}{\vec j}=\prod_{t\in \aI_T} \binom{k_t}{j_t}.
$$
With this notation the coarse-graining matrices are
\begin{eqnarray}
\nonumber
&{}&
{\widetilde Z}({\vec j},{\vec k})=\left(\prod\limits_{t\in \aI_T}
\binom{N-j_t}{k_t-j_t}\right)\1_{{\vec j}\le {\vec k}}\,;\\
\nonumber
&{}&
{\widetilde {Z^{-1}}}({\vec j},{\vec k})=
\left(\prod\limits_{t\in \aI_T} \binom{N-j_t}{k_t-j_t} \right)
(-1)^{\sum_{t\in I}(k_t-j_t)}\1_{{\vec j}\le {\vec k}}\,;\\
\nonumber
&{}& {\widetilde {Z'}}({\vec j},{\vec k})=
\binom{\vec j}{\vec k} \1_{{\vec k}\le {\vec j}}\,;\\
\nonumber
&{}&
{\widetilde {{Z'}{}^{-1}}}({\vec j},{\vec k})=
\binom{\vec j}{\vec k}
(-1)^{\sum_{t\in \aI_T}(j_t-k_t)}\1_{{\vec k}\le {\vec j}}\,.
\end{eqnarray}

We note that for the classes of sets and product of sets 
the conditions in Remark \ref{rem3} are satisfied.

\subsubsection{Coarse-Graining on zeta and M\"{o}bius matrices on partitions}
\label{SecCGpa}
Recall we can assume $I=\aI_N$. 
Let us define the decompositions of $N$ in an additive way. 
We set 
$$
\E_N=\{\eta:=\{e_s: s\in \aI_T\}: \, T\ge 1, \; e_s\ge 1 \, \forall s\in \aI_T, \,
\sum_{s\in \aI_T} e_s=N\}\,.
$$
Note that every $\eta\in \E_N$ is a multiset with elements $e_s\in \aI_N$ 
and with at most $T$ repetitions. The specificity is that 
the sum of the elements of $\eta\in \E_N$ is $N$.

\medskip
 
Let $[\eta]=T$ be the number of elements (including repetitions) 
of the multiset $\eta$. Let $\kappa=\{k_r:r\in \aI_R\}$ be another element 
in $\E_N$, we put
\begin{equation*}
\eta\wpreceq \kappa \, \Leftrightarrow \, T\ge R
\hbox{ and } \exists\, \theta: \aI_T\to \aI_R
\hbox{ onto such that} \!\!\!
\sum_{s\in \aI_T: \theta(s)=l} \!\!\!\!\!\!\! e_s=k_r  \; \forall r\!\in \! \aI_R.
\end{equation*}

For every partition $\alpha=\{A_t: t\in \aI_{[\alpha]}\}\in \aP(I)$ we denote by
$<\!\alpha\!>=\{|A_t|: t\in \aI_{[\alpha]}\}$ the multiset of the 
cardinal numbers of its atoms and call it the {\it skeleton} 
of the partition. We have $<\!\alpha\!>\in \E_N$
and $[<\!\alpha\!>]=[\alpha]$.

\medskip

On $\aP(I)$ we denote by $\alpha\sim \beta$ the equivalence relation
$<\!\alpha\!>=<\!\beta\!>$.

\medskip

Let us compute the number of partitions in $\aP(I)$ that has a certain skeleton.
For $\eta=\{e_s:s\in \aI_T\}\in \E_N$ define the equivalence relation
${\widehat{=}}_\eta$ on $\aI_T$ by $s_1 {\widehat{=}}_\eta\, s_2$ if $e_{s_1}=e_{s_2}$.
Let ${\widehat s}$ be the equivalence class of $s$ for the relation 
${\widehat{=}}_\eta$, so $|\widehat s|$ is the number of its elements. 
Denote by ${\wII}^\eta$ the set of equivalent classes. Define
$$
{\#}(\eta)  :=\binom{N}{\eta} \left(\prod_{{\tilde s}\in {\wII}^\eta}
|\widehat s|!\right)^{-1}\, \hbox{ with }
\binom{N}{\eta}:=\frac{N!}{\prod_{s\in \aI_T} e_s!}.
$$
We have that ${\#}(\eta) =|\{\alpha\in \aP(I): <\!\alpha\!>=\eta\}|$
is the number of different elements of $\aP(I)$ whose skeleton
is $\eta$, see equality $(1)$ in \cite{BG}. We recall that 
for a partition $\alpha$ and an atom $C\in \gamma$ of
a coarser partition $\gamma$, we denoted by
$\ell_C^\alpha$ the number of atoms of $\alpha$ contained in $C$,

\begin{proposition}
\label{prop14}
The matrices $Z$,  $Z^{-1}$, $Z'$ and ${Z^{-1}}'$
are all compatible with $\sim$ and the coarse-graining
matrices $\wZ=(\wZ(\eta,\kappa): \eta,\kappa \in \E_N)$
and $\wZ^{-1}=(\wZ^{-1}(\eta,\kappa): \eta,\kappa \in \E_N)$ satisfy:
\begin{eqnarray}
\nonumber
&{}&\wZ(\eta,\kappa)=|\{\gamma: <\!\gamma\!>=<\!\delta\!>, \alpha\preceq \gamma \}|
\1_{\eta\wpreceq \kappa}\;\,
\hbox{ for } <\!\alpha\!>=\eta, <\!\delta\!>=\kappa;\\
\label{coarsety}
&{}&\wZ^{-1}(\eta,\kappa)= \left(\!\sum_{\gamma: <\!\gamma\!>=<\!\delta\!>, 
\gamma\preceq \alpha} \!\!\!\!\!\!\!\!\!\!\!\!
(-1)^{[\alpha]+[\gamma]}\prod_{C\in \gamma} (\ell_C^\alpha\!-\!1)!\!\right)\!
\1_{\eta\wpreceq \kappa}\;\, \hbox{ for } 
<\!\alpha\!>\!=\!\eta, <\!\delta\!>\!=\!\kappa.
\end{eqnarray}
\end{proposition}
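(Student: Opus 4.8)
The plan is to obtain compatibility as an immediate corollary of Proposition~\ref{prop15}, by exhibiting for any two partitions with the same skeleton an automorphism of $(\aP(I),\preceq)$ induced by a permutation of the ground set $I$; once compatibility is established, the entries of $\wZ$ and $\wZ^{-1}$ are read off directly from Lemma~\ref{ldefcgm}.

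First I would check the hypotheses of Proposition~\ref{prop15}. Given $\alpha,\beta\in\aP(I)$ with $<\!\alpha\!>=<\!\beta\!>$, the equality of skeletons lets me enumerate the atoms of $\alpha$ and of $\beta$ so that matched atoms have equal cardinality, and hence choose a bijection $\widehat\pi:I\to I$ carrying each atom of $\alpha$ onto its matched atom of $\beta$. Let $\pi:\aP(I)\to\aP(I)$ be the induced map $\pi(\gamma)=\{\widehat\pi(C):C\in\gamma\}$. Then $\pi$ is a bijection (inverse induced by $\widehat\pi^{-1}$); it is an automorphism of $(\aP(I),\preceq)$, since $C\subseteq D\Leftrightarrow\widehat\pi(C)\subseteq\widehat\pi(D)$ forces $\gamma\preceq\delta\Leftrightarrow\pi(\gamma)\preceq\pi(\delta)$; it satisfies $\pi(\alpha)=\beta$ by construction; and it preserves every equivalence class, because $<\!\pi(\gamma)\!>=\{|\widehat\pi(C)|:C\in\gamma\}=\{|C|:C\in\gamma\}=<\!\gamma\!>$. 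Thus (\ref{pcg7})--(\ref{pcg9}) hold, and Proposition~\ref{prop15} yields that $Z$, $Z'$, $Z^{-1}$ and ${Z'}{}^{-1}$ are all compatible with $\sim$.

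Next I would compute the coarse-graining entries. Identifying $\wI$ with $\E_N$ through the skeleton map $\wa\mapsto<\!\alpha\!>$, I fix $\eta,\kappa\in\E_N$ and a representative $\alpha$ with $<\!\alpha\!>=\eta$ (Lemma~\ref{ldefcgm} guarantees the sums below do not depend on this choice). From $Z(\alpha,\gamma)=\1_{\alpha\preceq\gamma}$,
\[
\wZ(\eta,\kappa)=\sum_{\gamma:\,<\!\gamma\!>=\kappa}Z(\alpha,\gamma)
=\big|\{\gamma\in\aP(I):\,<\!\gamma\!>=\kappa,\ \alpha\preceq\gamma\}\big|,
\]
and from $Z^{-1}(\alpha,\gamma)=\mu(\alpha,\gamma)\1_{\alpha\preceq\gamma}$ together with the M\"obius function for partitions $\mu(\alpha,\gamma)=(-1)^{[\alpha]+[\gamma]}\prod_{C\in\gamma}(\ell_C^\alpha-1)!$,
\[
\wZ^{-1}(\eta,\kappa)=\sum_{\gamma:\,<\!\gamma\!>=\kappa,\ \alpha\preceq\gamma}
(-1)^{[\alpha]+[\gamma]}\prod_{C\in\gamma}(\ell_C^\alpha-1)!\,.
\]
To account for the factor $\1_{\eta\wpreceq\kappa}$ in (\ref{coarsety}) I would argue as follows: if $\alpha\preceq\gamma$ then each atom of $\gamma$ is a disjoint union of atoms of $\alpha$, so sending an atom of $\alpha$ to the atom of $\gamma$ containing it is an onto map whose fibre sums reproduce $<\!\gamma\!>$, which is exactly the relation $\eta=<\!\alpha\!>\wpreceq<\!\gamma\!>=\kappa$; conversely, if $\eta\wpreceq\kappa$ with onto witness $\theta$, merging the atoms of $\alpha$ along the fibres of $\theta$ produces a $\gamma$ with $<\!\gamma\!>=\kappa$ and $\alpha\preceq\gamma$. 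Hence both index sets above are nonempty precisely when $\eta\wpreceq\kappa$, which is (\ref{coarsety}).

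I do not anticipate a serious obstacle here. The only genuinely combinatorial content is the construction of $\widehat\pi$ and the verification that it induces a skeleton-preserving automorphism of the partition lattice; after that, Proposition~\ref{prop15} and Lemma~\ref{ldefcgm} do the work, the subtlest remaining point being the translation of ``$\alpha\preceq\gamma$ for some $\gamma$ of skeleton $\kappa$'' into the additive-merging relation $\eta\wpreceq\kappa$.
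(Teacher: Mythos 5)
Your proof is correct and follows essentially the same route as the paper: a ground-set bijection matching atoms of equal cardinality induces a skeleton-preserving automorphism of $(\aP(I),\preceq)$, Proposition \ref{prop15} then yields compatibility of $Z$, $Z'$, $Z^{-1}$, ${Z'}{}^{-1}$ with $\sim$, and the entries of $\wZ$ and $\wZ^{-1}$ are read off from Lemma \ref{ldefcgm} exactly as in the paper. One remark: your M\"obius sum runs over $\{\gamma:\,<\!\gamma\!>=\kappa,\ \alpha\preceq\gamma\}$, which is the correct orientation --- the condition ``$\gamma\preceq\alpha$'' in (\ref{coarsety}) and in the paper's own computation is evidently a typo, since $Z^{-1}(\alpha,\gamma)$ is supported on $\alpha\preceq\gamma$ and $\ell_C^\alpha$ is defined for atoms $C$ of the coarser partition --- and your explicit check that this index set is nonempty precisely when $\eta\wpreceq\kappa$ supplies a detail the paper leaves implicit.
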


\begin{proof}
Let $\alpha, \beta$ be a pair of equivalent partitions in $\aP(I)$, so  
$ \alpha\sim \beta$. We will construct 
a permutation $\pi:\aP(I)\to \aP(I)$ that satisfies the properties 
(\ref{pcg7}), (\ref{pcg8}) and (\ref{pcg9}) of
Proposition \ref{prop15}, then the result will follow.

\medskip

We denote $T:=[\alpha]=[\beta]$. Let us fix an order to 
the atoms of $\alpha$, we denote by $\alpha^o=(A_t: t\in \aI_T)$ the 
ordered sequence. Since $<\!\alpha\!>=<\!\beta\!>$ we can fix an order 
$\beta^o=(B_t: t\in \aI_T)$ of the atoms of $\beta$ in such
a way that $|A_t|=|B_t|$ for $m\in \aI_T$.
We fix two permutations $\varphi_\alpha:I\to I$ and $\varphi_\beta:I\to I$
that satisfy
$$
\forall t\in \aI_T:\;\;
\varphi_\alpha(t)\in A_t \, \Leftrightarrow \, \varphi_\beta(t)\in B_t.
$$
Note that $\varphi=\varphi_\beta\circ \varphi_\alpha^{-1}$ is also
a permutation of $I$. We extend this permutation to the class
of partitions, we define $\pi: \aP(I)\to \aP(I)$ by
$$
\gamma=\{C_t: t\in \aI_T\}\to \pi(\gamma)=\{D_t: t\in \aI_T\}
$$
where the partition $\pi(\gamma)$ is given by the equivalence relation
$$
i{\equiv}_{\pi(\gamma)} j \, \Leftrightarrow \,
\pi^{-1}(i)\, {\equiv}_{\gamma} \, \pi^{-1}(j)\,.
$$
Since $\pi$ is defined by a pointwise permutation $\varphi$ in $I$,
it follows straightforwardly
that $\pi$ satisfies (\ref{pcg8}).
Also note that $\pi(\alpha)=\beta$, so  (\ref{pcg7}) holds. 
It is also clear from the definition 
of $\pi$ that it preserves the skeletons, that is 
$<\!\gamma\!>=<\!\pi(\gamma)\!>$. Then (\ref{pcg9}) is satisfied.

\medskip

Hence, from Proposition \ref{prop15} we get that $Z$, 
$Z^{-1}$, $Z'$ and ${Z^{-1}}'$ are compatible with $\sim$. 
The expression for $\wZ$ is the first equality in (\ref{pcg11}).
On the other hand,
\begin{eqnarray*}
\wZ^{-1}(<\!\alpha\!>,<\!\delta\!>)&=&
\sum_{\gamma: <\!\gamma\!>=<\!\delta\!>} \!\! Z^{-1}(\alpha, \gamma)= \!\!\!\!
\sum_{\gamma: <\!\gamma\!>=<\!\delta\!>, \gamma\preceq \alpha}\!\!\!\!\!\!\! 
\mu(\alpha,\gamma)\\
&=& \!\!\sum_{\gamma: <\!\gamma\!>=<\!\delta\!>, 
\gamma\preceq \alpha}\!\!\!\!\!\!\!
(-1)^{[\alpha]+[\gamma]}\prod_{C\in \gamma} (\ell_C^\alpha-1)!\;\,.
\end{eqnarray*}
Hence the equalities in (\ref{coarsety}) are satisfied.
Similar expressions can be found for  ${\widetilde{Z'}}$ and 
${\widetilde{Z'}}^{-1}$.
\end{proof}

\section{Examples}
\label{Sec5}
We will revisit the Cannings haploid and multi-allelic discrete population 
model with constant population size. The Cannings haploid discrete 
population model with constant population size \cite{Cn1, Cn2}
was introduced as a model encompassing the models of Wright-Fisher 
\cite{WF}, Moran \cite{Mo}, Kimura \cite{Ki} and Karlin and 
McGregor \cite{KM}. The multi-allelic model was introduced and 
studied in Gladstien and M\"ohle in \cite{Gl, MM2}. In \cite{MM1,MM2}
an ancestor type process was associated to the haploid 
and the multi-allelic models, and their duality was stated. 
We will provide a set version of these models and prove 
they are in duality via a transpose zeta matrix. The 
coarse-graining of the set model gives the Cannings model 
and the zeta transposed duality becomes an hypergeometric duality.

\subsection{Haploid Cannings model}
\label{SecSubs} 
The Canning haploid discrete population
model with constant population size (\cite{Cn1, Cn2}) 
was studied in a duality perspective in 
\cite{MM1}. There it was introduced an ancestor type
model which was proven to be in duality with the former 
one via an  hypergeometric matrix.

\medskip

Here, we construct an evolution model on the class of subsets of a 
fixed finite set whose coarse-graining is 
the Cannings haploid model. We also
construct an ancestor type model on the family of sets which is in
transpose zeta duality with the former one. The coarse-graining 
version of these kernels are the Cannings model and its
ancestor type model, and the transpose zeta matrix becomes 
the hypergeometric matrix.

\medskip

Let $I$ be a finite set, denote by $\lP(I)$ the class of indexed partitions
of $I$ defined by: $(J_i:i\in I)\in \lP(I)$ if
$$
\forall i\in I\; J_i\in \bP(I)\,,  \;\, 
\forall \, i\neq j \;\;  J_i\cap J_j=\emptyset \,, \;\;
\bigcup_{i\in I} J_i=I.
$$
Let $(\Omega, \BB, \PP)$ be a probability space and $\nu:\Omega\to \lP(I)$,
$\omega\to \nu(\omega)$ be a random element. Consider 
a collection of independent equally distributed 
random elements $(\nu^n: n\in \ZZ)$ with the law of $\nu$. The
elements indexed by nonnegative integers will serve to construct 
the haploid forward process and the elements with negative 
indexes will be at the basis of the definition of the backward 
process.

\medskip

Now we select a fixed allele and consider the set of individuals having this
allele. In time $n$ this set is called $X_n$ (so $I\setminus X_n$ is the
set of individuals having the another allele).
The evolution of the process $(X_n: n\in \NN)$ with values in $\bP(I)$ 
is given by
$$
X_{n+1}=\bigcup_{i\in X_n} \nu^{n+1}_i.
$$ 
The process $(X_n: n\in \NN)$ is a Markov chain with  
stochastic transition matrix $P=(P(J,K): J,K\in \bP(I))$ given by
\begin{equation}
\label{tra1}
P(J,K)=\PP(X_{n+1}=K \, | \, X_n=J)=
\PP\left(\bigcup_{i\in J} \nu_i=K\right). 
\end{equation}
This chain is called the forward process.

\medskip

Now we consider the process $(Y_n: n\in \NN)$ with values in 
$\bP(I)$ and defined recursively by
$$
\left(\bigcup_{i\in Y_{n+1}} \nu^{-(n+1)}_i\supseteq Y_n\right) \hbox{ and }
\left(\forall L\subseteq Y_{n+1}, L\neq Y_{n+1}:\, 
\bigcup_{i\in L} \nu^{-(n+1)}_i\not\supseteq Y_n\right)\,.
$$
Let us see that $Y_n\in \bP(I)$ defines a uniquely $Y_{n+1}\in \bP(I)$.
The existence of $Y_{n+1}$ follows from 
$\bigcup_{i\in I} \nu^{-(n+1)}_i\supseteq Y_n$.
In fact, if for all proper subset $L$ of $I$ we have 
$\bigcup_{i\in L} \nu^{-(n+1)}_i\not\supseteq Y_n$, then $Y_{n+1}=I$.
If there exists some proper subset $L_0$ such that
$\bigcup_{i\in L_0} \nu^{-(n+1)}_i\supseteq Y_n$, then 
we apply the above argument to the proper subsets of $L_0$, 
and we continue up to the moment when we find a subset satisfying the 
requirements of $Y_{n+1}$.
The uniqueness is a consequence of the disjointedness: 
$L\cap L'=\emptyset$ implies  $(\bigcup_{i\in L} \nu^{-(n+1)}_i)
\cap (\bigcup_{i\in L'} \nu^{-(n+1)}_i)=\emptyset$.
By definition, $Y_{n+1}$ can be seen as the set of ancestors of $Y_n$. 

\medskip

We have that $(Y_n: n\in \NN)$ is a Markov chain with stochastic 
transition matrix $Q=(Q(J,K): J,K\in \bP(I))$ given by
\begin{eqnarray}
\nonumber
Q(J,K)&=& \PP(Y_{n+1}=K \, | \, Y_n=J)\\
\nonumber
&=& \PP\left((\bigcup_{i\in K} \nu_i\supseteq J) \hbox{ and }
(\forall L\subseteq K, L\neq K:\, 
\bigcup_{i\in L} \nu_i\not\supseteq J)\right).
\end{eqnarray}
(The fact that $Q$ is stochastic is a consequence of the fact that
$Y_n\in \bP(I)$ determines $Y_{n+1}\in \bP(I)$). Define 
$\X_i=\{\nu_i\subseteq J^c\}$. We have
$$
\bigcap_{i\in K^c}\X_i=\{\bigcup_{i\in K^c}
\nu_i\subseteq J^c\}.
$$
Since $\bigcup_{i\in I}\nu_i=I$ and the sets 
$(\nu_i: i\in I)$ are disjoint, we deduce
$$
\bigcap_{i\in K^c}\X_i=\{\bigcup_{i\in K}\nu_i\supseteq J\}.
$$
Hence
\begin{eqnarray*}
&{}& \bigcap_{i\in K^c}\X_i\setminus 
\left(\bigcup_{L: L\subseteq K, L\neq K}\left(\bigcap_{i\in L^c}\X_i\right)\right)\\
&{}&
=\{\bigcup_{i\in K} \nu_i\supseteq J\}
\setminus \left(\bigcup_{L: L\subseteq K, L\neq K}
\{\bigcup_{i\in L}\nu_i\supseteq J\}\right).
\end{eqnarray*}
By the Sylvester formula we get,
\begin{eqnarray*}
Q(J,K)&=&\sum_{L\subseteq K} (-1)^{|K|-|L|} 
\PP\left(\bigcup_{i\in L} \nu_i\supseteq J\right)\\
&=& \sum_{L\subseteq K} (-1)^{|K|-|L|}\left(\sum_{M: M\supseteq J}
\PP\left(\bigcup_{i\in L} \nu_i= M\right)\right)\\
&=& \sum_{L\subseteq K} (-1)^{|K|-|L|}\left(\sum_{M: M\supseteq J}
P(L,M)\right).
\end{eqnarray*}

We can check that equality (\ref{es1x}) is satisfied, then 
the kernel $Q$ is the $Z'-$dual (transpose zeta dual) 
of $P$, that is $Q'={Z'}^{-1} P Z'$ is satisfied where the
$Z'$ matrix is given by $Z'(J,K)=\1_{K\subseteq J}$. 

\medskip

Now assume the law of $\nu$ is invariant under permutation of
$I$, this means for all permutation $\pi=(\pi_i: i\in I)$ of $I$ we have
\begin{equation}
\label{per1}
\forall (J_i:i\!\in \!I)\in \lP(I):\;\,
\PP(\nu_i\!=\!J_i: i\!\in \!I)=\PP(\nu_{\pi(i)}\!=\!J_i: i\!\in \! I).
\end{equation}

As in Section \ref{SecCGse} let us take on $\bP(I)$ the equivalence 
relation given by the cardinality, $J\sim K$ if $|J|=|K|$.
Recall $\aI_N^0=\{0,..,N\}$ is identified with the
set of equivalence classes. 
Let us check that $P$ satisfies the coarse-graining conditions.
For $m\in \aI_N^0$ set 
$\Gamma^*(m)=\{L\subseteq I: |L|=m\}$. We must verify that,
\begin{equation}
\label{tra3}
|J|=|K|\, \Rightarrow \, \forall m\in \aI_N^0: \; \sum_{L\in \Gamma^*(m)} P(J,L)=
\sum_{L\in \Gamma^*(m)} P(K,L).
\end{equation}
Let $\pi$ be any permutation of $I$ such that $\pi(J)=K$.
We have that $\pi:\Gamma^*(m)\to \Gamma^*(m)$, $L\to \pi(L)$, is a bijection.
From (\ref{tra1}) and (\ref{per1}) we have 
$$
P(J,L)=\PP\left(\bigcup_{i\in J} \nu_i=L\right)=
\PP\left(\bigcup_{i\in J} \nu_{\pi(i)}=L\right)=
\PP\left(\bigcup_{i\in K} \nu_i=L\right)=P(K,L).
$$
Hence $\sum_{L\in \Gamma^*(m)}P(J,L)=\sum_{L\in \Gamma^*(m)}P(K,L)$,
and so (\ref{tra3}) is satisfied.

\medskip

The coarse-graining matrix $\wP=(\wP(i,j: i,j\in \aI_N^0)$ satisfies
\begin{equation*}
\hbox{For } |J|=i:\;\, 
\wP(i,j)=\sum_{L: |L|=j}P(J,L)
=\sum_{L: |L|=j}\PP\left(\bigcup_{i\in J} \nu_i=L\right).
\end{equation*}
Let us show $\wP$ is the transition matrix of the forward process for the
haploid model of Cannings (\cite{Cn1}). Let 
$$
{\widehat{\E}}_N=
\{{\vec e}=(e_1,..,e_N)\in (I^0_N)^N: \sum_{i=1}^N e_i=N \}. 
$$
Define the random element 
$|\nu|:\Omega\to {\widehat{\E}}_N$, $\omega\to |\nu(\omega)|$, that is
$|\nu(\omega)|_i=|\nu_i(\omega)|$
is the number of elements of the set $\nu_i(\omega)$. Note that 
$\sum_{i\in I}|\nu_i(\omega)|=N$ because $\nu(\omega)$ is an indexed partition.

\medskip

Since the law of $\nu$ is invariant by permutations, see (\ref{per1}),
we get that the law of $|\nu|$ is exchangeable, that is for all permutation
$\pi$ of $\aI_N$ it is satisfied
$$
\forall {\vec e}\in {\widehat{\E}}_N:\;\;
\PP(|\nu_{\pi(i)}|=e_i, i\in \aI_N)=\PP(|\nu_i|=e_i, i\in \aI_N).
$$
On the other hand we have
$$
\PP(\sum_{l=1}^i |\nu_l|=j)=\sum_{J: |J|=j}\PP(\bigcup_{l=1}^i \nu_l=J).
$$
Hence, the coarse-graining kernel $\wP$ satisfies
$$
\wP(i,j)=\PP(\sum_{l=1}^i |\nu|_l=j).
$$
Then $\wP$ is the kernel of the forward process of the haploid model of Cannings.
Denote $H=Z'$. Let us compute $\wH=\widetilde{Z'}$ in this coarse-graining setting.
Let $i,j\in \aI_N^0$, take $J$ be such that $|J|=i$, we have
$$
\wH(i,j)=\sum_{L: |L|=j} Z'(J,L)=\sum_{L: |L|=j} 
\1_{L\subseteq J}=|\{L: |L|=j, L\subseteq J\}|=\binom{i}{j}\1_{i\geq j}.
$$
In this case the function of (\ref{dual2x}) is
$\wah(j)=|\{L\subseteq I: |L|=j\}|=\binom{N}{j}$ for $j\in \aI_N^0$. Then 
$\wH_\wah=\wH D_\wah^{-1}$ satisfies
$$
\wH_\wah(i,j)=\frac{\binom{i}{j}}{\binom{N}{j}}\1_{i\geq j}.
$$
An easy computation gives,
$$
\wH_\wah^{-1}(i,j)=(-1)^{i-j}\binom{i}{j}\binom{N}{i}\1_{i\geq j} \,.
$$
Therefore, Theorem \ref{thm1} ensures that the 
matrix $\wQ_{\wah^{\!-\!1}\!,\wah}=
D_\wah^{-1} \wQ D_\wah$ is a stochastic matrix that satisfies
\begin{equation*}
{\wQ}'_{\wah^{\!-\!1}\!,\wah}={\wH}_\wah^{-1} \wP \wH_\wah\,.
\end{equation*}
So, it is the $\wH_\wah-$dual of $\wP$, see (\ref{dual4x}). 

\medskip

The matrix $\wH_\wah$, called the hypergeometric matrix, was firstly 
introduced in \cite{MM1} as a dual kernel between the forward 
process and the backward process of the haploid model of Cannings. 
As said, as a consequence of our results, the transition matrix of 
the backward process is given by $\wQ_{\wah^{\!-\!1}\!,\wah}= D_\wah^{-1} \wQ D_\wah$. 
In \cite{MM1} it is proven that this transition matrix also satisfies
$$
\wQ_{\wah^{\!-\!1}\!,\wah}(i,j)=\frac{\binom{N}{j}}{\binom{N}{i}}
\sum_{(l_1,..,l_j)\in (\aI_N^0)^j: \sum_{r=1}^j l_j=i}
\EE\left(\prod_{r=1}^j \binom{|\nu_i|}{l_i}\right).
$$

\subsection{Multi-allelic Cannings model}
Here we construct a multi-allelic model on the product class of 
subsets of a fixed finite set. We also
construct an ancestor type process on the family of sets which is in
transpose zeta duality with the former one.
We show that the coarse-graining of these models are the multi-allelic 
Cannings model as introduced in \cite{Gl, MM2} and the associated
ancestor type process defined in \cite{MM2}. The coarse-graining of the 
transpose zeta matrix becomes a generalized hypergeometric matrix.

\medskip

Let $I$ be a finite set and $T$ be the number of types. We 
assume $T\ge 2$. Consider two different classes of product of sets:
\begin{eqnarray*}
\lP^{(T)}(I) &=& \{ \vec{J}:=(J_t: t\!\in \! \aI_T): \, \forall \, t \; J_t\!\in \! \bP(I), 
\; t\!\neq \! t' \;\, J_t\cap J_{t'}\!= \!\emptyset, \; \bigcup_{t\in \aI_T} J_t\!= 
\!I \}\,;\\ \lS^{(T)}(I)&=& \{\vec{J}=(J_t: t\!\in \! \aI_T): \, 
\forall t \; J_t\in \bP(I), \; t\neq t' \;\; J_t\cap J_{t'}=\emptyset\}\,.
\end{eqnarray*}
That is, the elements $(J_t: t\!\in \! \aI_T)\!\in \! \lS^{(T)}(I)$ do not necessarily
cover $I$ (they satisfy $\bigcup_{t\in \aI_T} J_t\subseteq I$).
Note that $\lP^{(T)}(I)\subseteq \bP(I)^T$ and $\lS^{(T)}(I)\subseteq \bP(I)^T$.

\medskip

As before, $(\Omega, \BB, \PP)$ is a probability space and $\nu:\Omega\to \lP(I)$,
$\omega\to \nu(\omega)$ is a random element. 
Consider a collection of independent equally distributed
random elements $(\nu^n: n\in \ZZ)$ with the law of $\nu$. 
The elements indexed by a nonnegative $n$ will serve to construct
the forward process and the elements with negative
$n$ will serve to define the backward process.

\medskip

Let us define the process $(X_n: n\in \NN)$ with values in $\lP^{(T)}(I)$.
The $t-$coordinate of $X_n$ is noted by $(X_n)_t$. The process is given
by,
$$
\forall t\in \aI_T:\;\; (X_{n+1})_t=\bigcup_{i\in (X_n)_t} \nu^{n+1}_i.
$$
The process $(X_n: n\in \NN)$ is well defined in $\lP^{(T)}(I)$,
that is $X_0\in \lP^{(T)}(I)$ implies $X_n\in \lP^{(T)}(I)$ for
all $n\in \NN$, because $\nu$ takes values in $\lP(I)$.

\medskip
 
The process $(X_n: n\in \NN)$ is a Markov chain with stochastic transition 
matrix $P=(P(\vec{J},\vec{K}): \vec{J},\vec{K}\in \lP^{(T)}(I))$ given by
\begin{equation}
\label{tra11}
P(\vec{J},\vec{K})=\PP(X_{n+1}=\vec{K} \, | \, X_n=\vec{J})=
\PP\left(\bigcap_{t\in \aI_T}\left( \bigcup_{i\in J_t} \nu_i=K_t\right)\right).
\end{equation}
This chain is called the forward process.

\medskip

Now we define the backward process $(Y_n: n\in \NN)$ which will take 
values in $\lS^{(T)}(I)$. The $t-$coordinate of $Y_n$ will be 
denoted by $(Y_n)_t$. To define the process 
it is useful to use the product order on $\bP(I)^T$: 
${\vec{L}}\subseteq {\vec{M}}$
when $L_t\subseteq M_t$ for $t\in \aI_T$.
We define $Y_{n+1}$ from $Y_n$ by:
\begin{eqnarray*}
&{}& 
\left(\bigcap_{t\in \aI_T}\; \bigcup_{i\in (Y_{n+1})_t} 
\nu^{-(n+1)}_i\supseteq (Y_n)_t\right) \hbox{ and }\\
&{}&
\left(\forall \vec{L}\subseteq Y_{n+1}, \vec{L}\neq Y_{n+1}\,:\; 
\bigcup_{t\in \aI_T}
\left(\bigcup_{i\in L_t} \nu^{-(n+1)}_i\not\supseteq (Y_n)_t\right)\right).
\end{eqnarray*}
In this case it is not guaranteed that for all 
$Y_n\in \lS^{(T)}(I)$ there exists 
some $Y_{n+1}\in \lS^{(T)}(I)$ satisfying the above requirements.
But when it exists it is uniquely defined because of the disjointedness
property:
$L\cap L'=\emptyset$ implies  $(\bigcup_{i\in L} \nu^{-(n+1)}_i)
\cap (\bigcup_{i\in L'} \nu^{-(n+1)}_i)=\emptyset$.

\medskip
 
The random set $Y_{n+1}$ can be thought as the set of ancestors of $Y_n$. 
The process $(Y_n: n\in \NN)$ is a Markov chain that can lose mass. 
Its evolution is given by the (substochastic) transition matrix
$Q=\left(Q(\vec{J},\vec{K}): \vec{J},\vec{K}\in \lS^{(T)}(I)\right)$ 
defined by
\begin{eqnarray*}
&{}& Q(\vec{J},\vec{K})= \PP(Y_{n+1}=\vec{K} \, | \, Y_n=\vec{J})\\
&{}&= \PP\left(( \forall \, t\!\in \! \aI_T\;  
\bigcup_{i\in K_t} \nu_i\supseteq J_t ) 
\hbox{ and } (\forall  \vec{L}\subseteq Y_{n+1}, {\vec{L}}\neq {\vec{K}},\,
\exists t\!\in \! \aI_T\,: \bigcup_{i\in L_t} \nu_i\not\supseteq J_t)\right).
\end{eqnarray*}

Let us relate both kernels $Q$ and $P$. To this purpose 
it is convenient to define $A_{i,t}=\{\nu_i\subseteq J_t^c\}$.
As before, from $\bigcup_{i\in I}\nu_i=I$ and the disjointedness of 
the sets $(\nu_i: i\in I)$ we get
$$
\bigcap_{i\in K_t^c}A_{i,t}=\{\bigcup_{i\in K_t}\nu_i\supseteq J_t\}.
$$
Let us consider
\begin{equation}
\label{aux1}
\X_{\vec{K}}^{(\vec{J})}=\bigcap_{t\in \aI_T}\left(\bigcap_{i\in K_t^c}A_{i,t}\right)
=\bigcap_{t\in \aI_T}\left(\{\bigcup_{i\in K_t}\nu_i\supseteq J_t\}\right).
\end{equation}
Hence, relation (\ref{aux1}) and the product order allows to write,
\begin{equation*}
Q(\vec{J},\vec{K})=
\PP\left(\X_{\vec{K}}^{(\vec{J})}
\setminus \left(\bigcup_{ {\vec{L}}\subseteq {\vec{K}}, 
{\vec{L}}\neq {\vec{K}} }\X_{\vec{L}}^{(\vec{J})}\right)\right).
\end{equation*}
Note that
$$
\PP(\X_{\vec{K}}^{(\vec{J})})=\sum_{{\vec M}: {\vec M}\supseteq {\vec J}} 
P({\vec K},{\vec L}).
$$ 
By the Sylvester formula (\ref{re0cp}) for product of sets we get
\begin{equation*}
Q({\vec{J}},{\vec{K}})=\sum_{{\vec{L}}: {\vec{L}}\subseteq {\vec{K}}}
(-1)^{\sum_{t\in \aI_T}(|K_t|-|L_t|)} \left(\sum_{{\vec{M}}: {\vec{M}}
\supseteq {\vec{J}}}P({\vec{L}}, {\vec{M}})\right).
\end{equation*}
Hence, equality (\ref{es1x}) is satisfied, then $Q'={Z'}^{-1} P Z'$
holds with $Z'$ given by $Z'({\vec{J}},{\vec{K}})=
\1_{{\vec{K}}\subseteq {\vec{J}}}$. That is,
the kernel $Q$ is the $Z'-$dual (transpose zeta dual) of $P$.

\medskip

Now assume the law of $\nu$ is invariant under permutation of
$I$, so (\ref{per1}) is satisfied.
In $\bP(\I)^T$ we define $|{\vec{J}}|=(|J_t|: t\in \aI_T)$ and
we endow $\bP(\I)^T$ with the equivalence
relation ${\vec{J}}\sim {\vec{K}}$ if 
$|{\vec{J}}|=|{\vec{K}}|$.
Let us check that $P$ satisfies the coarse-graining conditions.
Fix $N=|I|$, let
$$
{\widehat{\E}}_N^{(T)}=\{{\vec e}=(e_1,..,e_T)\in (\aI_N^0)^T: 
\sum_{t\in \aI_T}e_t=N\}.
$$ 
For all ${\vec e}\in {\widehat{\E}}_N^{(T)}$ we define
$$
\Gamma^*({\vec e})=\{{\vec {L}}\in \lP^{(T)}(I): |{\vec {L}}|={\vec e}\}.
$$ 
We must verify that,
\begin{equation*}
|{\vec {J}}|=|{\vec {K}}|\, \Rightarrow \, \forall 
{\vec e}\in {\widehat{\E}}_N^{(T)}: \; 
\sum_{{\vec {L}}\in \Gamma^*(\vec e)} 
P({\vec {J}},{\vec {L}})=
\sum_{{\vec {L}}\in \Gamma^*(\vec e)} P({\vec {K}},{\vec {L}}).
\end{equation*}
Let $\pi$ be any permutation of $I$ such that $\pi(J_t)=K_t$ for all 
$t\in \aI_T$, this permutation exists because the elements of
$\vec J$ are disjoint sets, as well as those of $\vec K$.
We have that $\pi:\Gamma^*({\vec e})\to \Gamma^*({\vec e})$, 
${\vec {L}}\to \pi({\vec {L}})$, is a bijection.
From (\ref{tra11}) and (\ref{per1}) we have
\begin{eqnarray*}
P({\vec {J}},{\vec {L}})&=&
\PP\left(\bigcap_{t\in \aI_T}\left(\bigcup_{i\in J_t} \nu_i=L_t\right)\right)
=\PP\left(\bigcap_{t\in \aI_T}\left(\bigcup_{i\in J_t} 
\nu_{\pi(i)}=L_t\right)\right)\\
&=&\PP\left(\bigcap_{t\in \aI_T}\left(\bigcup_{i\in K_t} \nu_i=L_t\right)\right) 
=P({\vec {K}},{\vec {L}}).
\end{eqnarray*}
Hence $\sum_{{\vec {L}}\in \Gamma^*({\vec e})}
P({\vec {J}},{\vec {L}})=\sum_{{\vec {L}}\in \Gamma^*({\vec e})}
P({\vec {K}},{\vec {L}})$, and so (\ref{tra3}) is satisfied.

\medskip

The coarse-graining matrix $\wP=(\wP({\vec d},{\vec e}): 
{\vec d},{\vec e}\in {\widehat{\E}}_N^{(T)})$ is such that for all 
${\vec d},{\vec e}\in {\widehat{\E}}_N^{(T)}$ 
and every ${\vec{J}}$ that satisfies $|{\vec{J}}|\!=\!{\vec d}$,
\begin{equation}
\label{duP11}
\wP({\vec d},{\vec e})=\!\! \sum_{{\vec {L}}\in \Gamma^*({\vec e})}
\!\!P({\vec{J}},{\vec{L}})
=\sum_{{\vec {L}}\in \Gamma^*({\vec e})}\!\!
\PP\left(\bigcap_{t\in \aI_T}\left(\bigcup_{i\in J_t} \nu_i=L_t\right)\right).
\end{equation}
Let us show $\wP$ is the transition matrix of the forward process for the
multi-allelic model in \cite{Gl, MM2}. Recall the random element
$|\nu|:\Omega\to {\widehat{\E}}_N^{(T)}$, $\omega\to |\nu(\omega)|$, so
$|\nu(\omega)|_i=|\nu_i(\omega)|$.
As pointed out, since the law of $\nu$ is invariant under permutations,
the law of $|\nu|$ is exchangeable. From exchangeability 
and relation (\ref{duP11}), the coarse-graining matrix $\wP$ satisfies
for  all pair ${\vec d},{\vec e}\in {\widehat{\E}}_N^{(T)}$,
$$
\wP({\vec d},{\vec e})=
\PP\left(\bigcap_{t\in \aI_T}\left(\sum_{l=\Delta_{t-1}+1}^{\Delta_t}
 |\nu|_l=e_t\right)\right) \hbox{ where }
\Delta_t=\sum_{s=1}^t d_s \hbox{ for } t\in \aI_T \hbox{ and } \Delta_0=0.
$$
Then $\wP$ is the kernel of the forward process of the multi-allelic model 
in \cite{Gl, MM2}. let $H=Z'$.
Let us compute the dual matrix $\wH={\widetilde {Z'}}$ in 
this coarse-graining setting. Let ${\vec d},{\vec e}\in {\widehat{\E}}_N^{(T)}$ 
and ${\vec{J}}$ be such that $|{\vec{J}}|={\vec d}$, we have
\begin{eqnarray*}
\wH({\vec d},{\vec e})&=&\sum_{{\vec {L}}\in \Gamma^*(\vec e)} 
Z'({\vec{J}},{\vec{L}})=\sum_{{\vec {L}}\in \Gamma^*(\vec e)}
\1_{{\vec{L}}\subseteq {\vec{J}}}=
|{\vec {L}}\in \Gamma^*(\vec e), \, {\vec{L}}\subseteq {\vec{J}}\}|\\
&=&\prod_{t\in \aI_T}\binom{d_t}{e_t}\1_{{\vec d}\geq {\vec e}}
:=\binom{\vec d}{\vec e} \1_{{\vec d}\geq {\vec e}}\;.
\end{eqnarray*}
In this case the function $\wah$ of (\ref{dual2x}) is given by,
\begin{equation}
\label{mm23}
\forall\; {\vec e}\in {\widehat{\E}}_N^{(T)}:\;\;
\wah({\vec e})=|\Gamma^*({\vec e})|=
\frac{N!}{\prod_{t\in \aI_T} e_t!}=\binom{N}{{\vec e}}\,. 
\end{equation}
Then
$\wH_\wah=\wH D_\wah^{-1}$ satisfies
\begin{equation}
\label{mm2}
\wH_\wah({\vec d},{\vec e})=\frac{\binom{\vec d}{\vec e}}{\binom{N}{{\vec e}}}
\1_{{\vec d}\geq {\vec e}}\;.
\end{equation}
Therefore, Theorem \ref{thm1} ensures that the
matrix $\wQ_{\wah^{\!-\!1}\!,\wah}=D_\wah^{-1} \wQ D_\wah$ 
is substochastic (because $Q$ is) and satisfies
\begin{equation*}
{\wQ}'_{\wah^{\!-\!1}\!,\wah}={\wH}_\wah^{-1} \wP \wH_\wah\,,
\end{equation*}
that is, is the $\wH_\wah-$dual of $\wP$, see (\ref{dual4x}).

\medskip

We note that the coefficients of (\ref{mm2}) are exactly 
the same as those appearing in expression $(8)$ in \cite{MM2}.
Hence, by coarse-graining  
we have retrieved the result proven in \cite{MM2}, that a dual 
kernel between the forward process and the backward process 
of the multi-allelic model of Cannings is given by (\ref{mm2}).
In \cite{MM2} it is supplied several formulae for 
$\wQ_{\wah^{\!-\!1}\!,\wah}$, in particular see its Proposition $2$.

\section*{Acknowledgements} 
The authors acknowledge the partial support given by the CONICYT BASAL-CMM
project PFB $03$ and S. Mart\'{i}nez thanks the hospitality of
{Laboratoire de Physique Th\'{e}orique et Mod\'{e}lisation at the
Universit\'{e} de Cergy-Pontoise. The authors thanks an anonymous referee
for his/her comments that allow to improve the presentation of this work.

\end{document}